\newcommand{\be}{\begin{equation}}
\newcommand{\ee}{\end{equation}}
\newcommand{\bee}{\begin{equation*}}
\newcommand{\eee}{\end{equation*}}
\newcommand{\supp}{\text{\textup{supp}}\,}
\newcommand*{\avint}{\mathop{\ooalign{$\int$\cr$-$}}}
\newtheorem{thm}{Theorem}[section]
\newtheorem{prop}{Proposition}[section]
\newtheorem{rmk}{Remark}[section]
\newtheorem{lem}{Lemma}[section]
\newtheorem{cor}{Corollary}[section]\label{key}
\numberwithin{equation}{section}
\begin{document}

\title{Anisotropic Caffarelli-Kohn-Nirenberg type inequalities  }
\author{YanYan Li\footnote{Department of Mathematics, Rutgers University, 110 Frelinghuysen Road, Piscataway, NJ 08854, USA. Email: yyli@math.rutgers.edu,   is partially supported by 
NSF Grants DMS-1501004, DMS-2000261, and Simons Fellows Award 677077. }\ \  and  
Xukai Yan\footnote{Department of Mathematics,  Oklahoma State University, 401 Mathematical Sciences Building, Stillwater, OK 74078, USA. Email: xuyan@okstate.edu, is partially supported by AMS-Simons Travel Grant and AWM-NSF Travel Grant 1642548.}}
\date{}
\maketitle

\abstract{
\noindent Caffarelli, Kohn and Nirenberg considered in 1984 the interpolation inequalities  
\[
   \||x|^{\gamma_1}u\|_{L^s(\mathbb{R}^n)}\le C\||x|^{\gamma_2}\nabla u\|_{L^p(\mathbb{R}^n)}^a\||x|^{\gamma_3}u\|_{L^q(\mathbb{R}^n)}^{1-a} 
\] 
 in dimension $n\ge 1$,  and established necessary and sufficient conditions for which to hold under natural assumptions on the parameters.   Motivated by our study of the asymptotic stability of solutions to the Navier-Stokes equations, we consider a more general and improved anisotropic  version of the interpolation inequalities  
\[
    \||x|^{\gamma_1}|x'|^{\alpha}u\|_{L^s(\mathbb{R}^n)}\le C\||x|^{\gamma_2}|x'|^{\mu}\nabla u\|_{L^p(\mathbb{R}^n)}^{a}\||x|^{\gamma_3}|x'|^{\beta}u\|_{L^q(\mathbb{R}^n)}^{1-a} 
\] 
 in dimensions $n\ge 2$, where $x=(x', x_n)$ and $x'=(x_1, ..., x_{n-1})$,  and give necessary and sufficient conditions for which to hold  under 
 natural assumptions on the parameters.    Moreover we extend the Caffarelli-Kohn-Nirenberg inequalities from $q\ge 1$ to $q>0$. 
 This extension,  together with a nonlinear Poincar\'{e} inequality which we obtain in this paper, has played an important role 
 in our proof of the above mentioned anisotropic interpolation inequalities. }

\section{Introduction}

Let $n\ge 1$, $s,p,q,\alpha, \mu, \beta, a$ be real numbers satisfying  
\begin{equation}\label{eqNCB_1'}
  s>0, \quad p, q\ge 1, \quad 0\le a\le 1, 
\end{equation}
 \begin{equation}\label{eqNCB_2}
    \frac{1}{s}+\frac{\gamma_1}{n}>0, \quad \frac{1}{p}+\frac{\gamma_2}{n}>0, \quad \frac{1}{q}+\frac{\gamma_3}{n}>0, 
    \end{equation}
     \begin{equation}\label{eqNCB_3}
      \frac{1}{s}+\frac{\gamma_1}{n}=a\Big(\frac{1}{p}+\frac{\gamma_2-1}{n}\Big)+(1-a)\Big(\frac{1}{q}+\frac{\gamma_3}{n}\Big), 
    \end{equation}
     \begin{equation}\label{eqNCB_4}
     \gamma_1 \le a\gamma_2+(1-a)\gamma_3, 
    \end{equation}
     \begin{equation}\label{eqNCB_5}
     \frac{1}{s}\le \frac{a}{p}+\frac{1-a}{q} \quad \textrm{ if } a=0 \textrm{ or } a=1 \textrm{ or }\frac{1}{s}+\frac{\gamma_1}{n}=\frac{1}{p}+\frac{\gamma_2-1}{n}=\frac{1}{q}+\frac{\gamma_3}{n}. 
    \end{equation}

Caffarelli, Kohn and Nirenberg 
established the following classical interpolation inequalities. 

\medskip

\noindent\textbf{Theorem A. (\cite{CKN}, see also \cite{CKN2})}   \emph{For  $n\ge 1$, let 
   $s, p, q, \gamma_1, \gamma_2, \gamma_3$ and $a$ satisfy (\ref{eqNCB_1'}) and (\ref{eqNCB_2}). 
        Then there exists some positive constant $C$    such that 
               \begin{equation}\label{eqD_2_2_0}
     \||x|^{\gamma_1}u\|_{L^s(\mathbb{R}^n)}\le C\||x|^{\gamma_2}\nabla u\|_{L^p(\mathbb{R}^n)}^a\||x|^{\gamma_3}u\|_{L^q(\mathbb{R}^n)}^{1-a} 
  \end{equation}
  holds for all $u\in C^1_c(\mathbb{R}^n)$ if and only if  (\ref{eqNCB_3})-(\ref{eqNCB_5}) hold.   Furthermore, on any compact set in the parameter space in which (\ref{eqNCB_1'}) and (\ref{eqNCB_2}) hold, 
    the constant $C$ is bounded.}

\medskip

Given (\ref{eqNCB_1'}), condition (\ref{eqNCB_2}) holds if and only if $\||x|^{\gamma_1}u\|_{L^s(\mathbb{R}^n)}$, $\||x|^{\gamma_2}\nabla u\|_{L^p(\mathbb{R}^n)}$ and $\||x|^{\gamma_3}u\|_{L^q(\mathbb{R}^n)}$ are finite for all $u\in C_c^{\infty}(\mathbb{R}^n)$. 

The above theorem is the same as the theorem in \cite{CKN}, though the formulation of the conditions is somewhat different.

Lin \cite{Lin} generalized (\ref{eqD_2_2_0}) 
 to include derivatives of any order. 
 Badiale and Tarantello \cite{BT} derived a cylindrical Sobolev-Hardy type inequality.  
 Bahouri, Chemin and Gallagher \cite{BCG1, BCG2} obtained refined Hardy inequalities. 
 Nguyen and Squassina \cite{NS, NS2} generalized the Caffarelli-Kohn-Nirenberg inequalities to fractional Sobolev spaces. 
 Best constants and the existence (and nonexistence) of extremal functions of  (\ref{eqD_2_2_0}) 
have been studied extensively, see Catrina and Wang \cite{CW},  Dolbeault,  Esteban and Loss \cite{DEL}, and the references therein. 
 Partly motivated by works of 
  Bourgain,  Brezis  and  Mironescu \cite{BBM1, BBM2} and Maz’ya  and  Shaposhnikova  \cite{MS}, Frank and Seiringer \cite{FS} identified best constants for fractional Hardy type inequalities. 
  Sharp Sobolev and isoperimetric inequalities with monomial weights, and related problems, are studied by Cabr\'{e}, Ros-Oton and Serra, see \cite{CR, CRS}.

In this paper, we prove the following theorem on anisotropic Caffarelli-Kohn-Nirenberg type inequalities.

For $n\ge 2$, let  $s, p, q, a, \gamma_1, \gamma_2, \gamma_3, \alpha, \mu$ and $\beta$ be real numbers satisfying 

\begin{equation}\label{eqNCA_1}
    s, q>0, \quad p \ge 1, \quad 0\le a\le 1, 
    \end{equation}
    \begin{equation}\label{eqNCA_2}
    \frac{1}{s}+\frac{\alpha}{n-1}>0, \quad \frac{1}{p}+\frac{\mu}{n-1}>0,  
    \quad \frac{1}{q}+\frac{\beta}{n-1}>0,
    \end{equation}
    \begin{equation}\label{eqNCA_3}
      \frac{1}{s}+\frac{\alpha+\gamma_1}{n}>0,  \quad \frac{1}{p}+\frac{\mu+\gamma_2}{n}>0, \quad \frac{1}{q}+\frac{\beta+\gamma_3}{n}>0,
    \end{equation}
\begin{equation}\label{eqNCA_5}
      \frac{1}{s}+\frac{\gamma_1+\alpha}{n}=a\Big(\frac{1}{p}+\frac{\gamma_2+\mu-1}{n}\Big)+(1-a)\Big(\frac{1}{q}+\frac{\gamma_3+\beta}{n}\Big), 
      \end{equation}
      \begin{equation}\label{eqNCA_6_1}
   \gamma_1\le a\gamma_2+(1-a)\gamma_3,  
\end{equation}
 \begin{equation}\label{eqNCA_6_2}
  \gamma_1+\alpha\le a(\gamma_2+\mu)+(1-a)(\gamma_3+\beta), 
\end{equation}
 \begin{equation}\label{eqNCA_6_3}
  \frac{1}{s}+\frac{\alpha}{n-1}\ge a\Big(\frac{1}{p}+\frac{\mu-1}{n-1}\Big)+(1-a)\Big(\frac{1}{q}+\frac{\beta}{n-1}\Big), 
  \end{equation}
\begin{equation}\label{eqNCA_7}
\begin{split}
 \frac{1}{s}\le \frac{a}{p}+\frac{1-a}{q}  &  \textrm{ if } a=0 \textrm{ or } a=1 \textrm{ or }\frac{1}{p}+\frac{\gamma_2+\mu-1}{n}=\frac{1}{q}+\frac{\gamma_3+\beta}{n}=\frac{1}{s}+\frac{\gamma_1+\alpha}{n}, \\
& \textrm{ or }
  \frac{1}{s}+\frac{\alpha}{n-1}=a\Big(\frac{1}{p}+\frac{\mu-1}{n-1}\Big)+(1-a)\Big(\frac{1}{q}+\frac{\beta}{n-1}\Big).   \end{split}
\end{equation}

Throughout the paper, we denote $x=(x', x_n)$, where $x'=(x_1, ..., x_{n-1})$. We have the following theorem.
\begin{thm}\label{thm_main}
  For $n\ge 2$, let $s, p, q, a, \gamma_1, \gamma_2, \gamma_3, \alpha, \mu$ and $\beta$ be real numbers satisfying (\ref{eqNCA_1})-(\ref{eqNCA_3}). 
  Then there exists some positive constant $C$  such that 
   \begin{equation}\label{eqNC}
   \||x|^{\gamma_1}|x'|^{\alpha}u\|_{L^s(\mathbb{R}^n)}\le C\||x|^{\gamma_2}|x'|^{\mu}\nabla u\|_{L^p(\mathbb{R}^n)}^{a}\||x|^{\gamma_3}|x'|^{\beta}u\|_{L^q(\mathbb{R}^n)}^{1-a} 
\end{equation}
holds for all $u\in C_c^{1}(\mathbb{R}^n)$ if and only if 
  (\ref{eqNCA_5})-(\ref{eqNCA_7}) hold. 
  Furthermore, on any compact set in the parameter space in which (\ref{eqNCA_1})-(\ref{eqNCA_3}) hold, the constant $C$ is bounded. 
\end{thm}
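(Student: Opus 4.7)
The plan for necessity is to test (\ref{eqNC}) against carefully chosen one‑parameter families of functions. Isotropic dilations $u_\lambda(x) := u(\lambda x)$ scale the three norms in (\ref{eqNC}) by different powers of $\lambda$; requiring the inequality to survive $\lambda \to 0$ and $\lambda \to \infty$ forces the dimensional balance (\ref{eqNCA_5}) on the non‑concentrating mode, and then forces the convex comparison (\ref{eqNCA_6_2}) on $\gamma_i+$(radial weight contribution) via concentration near the origin and at infinity. Fixing a smooth function supported away from the $x_n$-axis and applying the anisotropic rescaling $u \mapsto u(\lambda x', x_n)$ (or equivalently restricting to translates in the $x_n$-direction) sees the effective $(n-1)$-dimensional structure in $x'$ and yields (\ref{eqNCA_6_3}); comparing such a family with one that is translated far along $x_n$ (where $|x|\sim|x_n|$ and $|x'|$ stays fixed) gives (\ref{eqNCA_6_1}). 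The boundary equalities in (\ref{eqNCA_7}) correspond to cases where the two sides scale identically under some one‑parameter family, and are extracted by plugging in test functions that saturate the relevant scaling.

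\textbf{Sufficiency.} I would pass to polar coordinates adapted to the $x_n$‑axis, writing $x = (r\sin\theta\,\omega, r\cos\theta)$ with $r=|x|\in(0,\infty)$, $\theta\in(0,\pi)$, $\omega\in S^{n-2}$, so that $|x'|=r\sin\theta$ and $dx = r^{n-1}(\sin\theta)^{n-2}\,dr\,d\theta\,d\omega$. The inequality (\ref{eqNC}) then becomes a weighted interpolation inequality on $(0,\infty)\times(0,\pi)\times S^{n-2}$ in which $r$‑weights are powers of $r$ with exponents governed by $\gamma_i+\alpha,\gamma_i+\mu,\gamma_i+\beta$, and $\theta$‑weights are powers of $\sin\theta$ with exponents governed by the $(n-1)$‑dimensional conditions (\ref{eqNCA_2}), (\ref{eqNCA_6_3}). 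This explains structurally why (\ref{eqNCA_5})–(\ref{eqNCA_6_1}) are the ``full‑space'' conditions (dimension $n$), while (\ref{eqNCA_6_3}) together with (\ref{eqNCA_2}) is the ``near axis'' condition (dimension $n-1$).

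\textbf{Decomposition into mean and oscillation.} The main step is to split $u(r,\theta,\omega) = \bar u(r,\omega) + v(r,\theta,\omega)$, where $\bar u$ is a weighted average of $u$ in the $\theta$ variable chosen so that $v$ integrates to zero against an appropriate weight. For $\bar u$, which depends only on $(r,\omega)$ (effectively $n-1$ variables) and for which the $|x'|$ weights become benign constants after carrying out the $\theta$-integral, one applies the classical Caffarelli–Kohn–Nirenberg inequality in the form extended to $q>0$ stated in the abstract. For $v$, which has zero weighted mean in $\theta$, one applies a nonlinear Poincaré inequality of the form
\[
\bigl\|(\sin\theta)^{\alpha+\frac{n-2}{s}} v\bigr\|_{L^s_\theta}
\le C \bigl\|(\sin\theta)^{\mu+\frac{n-2}{p}} \partial_\theta v\bigr\|_{L^p_\theta}^{a}
\bigl\|(\sin\theta)^{\beta+\frac{n-2}{q}} v\bigr\|_{L^q_\theta}^{1-a},
\]
applied fiberwise, followed by integration in $(r,\omega)$ using Hölder with exponents forced by (\ref{eqNCA_5}). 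Matching the exponents so that the product of the two estimates reproduces the right‑hand side of (\ref{eqNC}) reduces to an algebraic identity guaranteed by (\ref{eqNCA_5})–(\ref{eqNCA_7}).

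\textbf{Main obstacles.} The principal difficulty is the last step: getting the nonlinear Poincaré inequality to hold simultaneously with the flexible exponents $(s,p,q)$ including $q \in (0,1)$, where $L^q$ is not a norm and duality is unavailable. This forces direct arguments (one‑dimensional rearrangement, explicit ODE comparison on $(0,\pi)$, and separate treatment of the endpoints $\theta=0,\pi$ where $\sin\theta$ degenerates), and is precisely why the extension of Theorem A to $q>0$ is needed in the first place. A secondary obstacle is verifying uniformity of $C$ on compact subsets of parameter space in (\ref{eqNCA_1})–(\ref{eqNCA_3}); this is handled by tracking constants through each reduction and invoking the uniformity clause of Theorem A on the $\bar u$ part.
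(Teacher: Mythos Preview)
Your necessity sketch is in the right spirit and close to the paper's approach, though you understate the work needed for (\ref{eqNCA_7}): the paper treats several cases, and in the hardest one (strict inequality in (\ref{eqNCA_6_3}), $0<a<1$, and the two scaling rates in (\ref{eqNCA_5}) distinct) builds test functions with many disjointly supported rescaled copies to force $1/s\le a/p+(1-a)/q$. ``Test functions that saturate the relevant scaling'' does not capture this.

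The sufficiency plan has a real gap. Your fiberwise inequality
\[
\bigl\|(\sin\theta)^{\alpha+\frac{n-2}{s}} v\bigr\|_{L^s_\theta}
\le C \bigl\|(\sin\theta)^{\mu+\frac{n-2}{p}} \partial_\theta v\bigr\|_{L^p_\theta}^{a}
\bigl\|(\sin\theta)^{\beta+\frac{n-2}{q}} v\bigr\|_{L^q_\theta}^{1-a}
\]
is \emph{not} the nonlinear Poincar\'e inequality of the paper; that result (Theorem~\ref{thm1-new}) has the \emph{same} exponent $p$ on both sides and compares $w$ to a nonlinear average $(\avint w^{1/\lambda})^{\lambda}$, whereas what you wrote is a one--dimensional weighted Caffarelli--Kohn--Nirenberg inequality for a function with zero weighted mean on an interval whose weights degenerate at both endpoints. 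That is a separate, nontrivial statement which you have not established, and it is precisely the delicate part. A second difficulty hides in the ``benign constants'' claim for $\bar u$: once the $\theta$--integral is carried out, you need both $\||x|^{\gamma_2}|x'|^{\mu}\nabla\bar u\|_{L^p}\le C\||x|^{\gamma_2}|x'|^{\mu}\nabla u\|_{L^p}$ and $\||x|^{\gamma_3}|x'|^{\beta}\bar u\|_{L^q}\le C\||x|^{\gamma_3}|x'|^{\beta}u\|_{L^q}$; since the averaging weight in $\theta$ is fixed while the norm weights carry $\mu p$ and $\beta q$ powers of $\sin\theta$, Jensen does not give these bounds in general (and for $q<1$ even the triangle inequality step for $v=u-\bar u$ is unavailable).

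The paper's route avoids both problems by never splitting $u$ into a $\theta$--mean plus oscillation. Instead it (i) reduces $p>1$ to $p=1$ via the substitution $w=|u|^{s/\bar s}$ with $1/\bar s=1/s+1-1/p$; (ii) proves the inequality on a cylinder $\{|x'|\le \delta,\ 0\le x_n\le h\}$ by subtracting from $u$ a single \emph{constant} $(\avint_{D_2\setminus D_1}u^m)^{1/m}$ (so the gradient is unchanged), applying the $(n-1)$--dimensional extended CKN (Theorem~\ref{thmD_2}) to $v(\cdot,x_n)$ for each fixed $x_n$, and then using the genuine nonlinear Poincar\'e (same exponent) only on the shell $D_2\setminus D_1$ where all weights are comparable to $1$; (iii) passes from cylinders to cones via a dyadic decomposition in $|x|$ and a contraction iteration (the $A_k\le\theta A_{k+1}+\cdots$ mechanism). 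Your proposal is missing all three of these mechanisms: the $p\to1$ reduction, the constant (not function) subtraction so that the Poincar\'e step lands in the setting of Theorem~\ref{thm1-new}, and the dyadic iteration that globalizes the local estimate.
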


Given (\ref{eqNCA_1}), conditions (\ref{eqNCA_2}) and (\ref{eqNCA_3}) hold if and only if $\||x|^{\gamma_1}|x'|^{\alpha}u\|_{L^s(\mathbb{R}^n)}$, \\
 $\||x|^{\gamma_2}|x'|^{\mu}\nabla u\|_{L^p(\mathbb{R}^n)}$ and $\||x|^{\gamma_3}|x'|^{\beta}u\|_{L^q(\mathbb{R}^n)}$
 are finite for all $u\in C_c^{\infty}(\mathbb{R}^{n})$. 

 Inequality (\ref{eqNC}) was proved in \cite{BT} in the special cases when $n\ge 3$, $a=1$, $\gamma_1=\gamma_2=\mu=0$, $1/s+\alpha/n=1/p-1/n$, $1<p<n$,  $-1\le \alpha\le 0$ and $(1/p-1/n)(n-1)+\alpha/n>0$;
 and proved in \cite{LY} in the special cases when $n\ge 2$, $a=1$, $1\le s=p<n$, $\alpha p>1-n$, 
 $\mu p>1-n$, $(\alpha+\gamma_1)p>-n$, 
 $\gamma_1+\alpha=\gamma_2+\mu-1$ and $\gamma_1\le \gamma_2$. 

Taking $\alpha=\mu=\beta=0$,  inequality (\ref{eqNC}) is an improvement 
of the Caffarelli-Kohn-Nirenberg inequalities 
(Theorem A)  
from $q\ge 1$ to $q>0$. 

When $\alpha<0$ and $\mu, \beta>0$, inequality (\ref{eqNC})  strengthens 
\[
     \||x|^{\gamma_1+\alpha}u\|_{L^s(\mathbb{R}^n)}\le C\||x|^{\gamma_2+\mu}\nabla u\|_{L^p(\mathbb{R}^n)}^a\||x|^{\gamma_3+\beta}u\|_{L^q(\mathbb{R}^n)}^{1-a}
\]
 which is given by (\ref{eqD_2_2_0}). 
In particular, when $n\ge 3$,  $a=1$, $s=p=2$, $\gamma_1=\gamma_2=\alpha=-1/2$, and $\mu=1/2$, 
inequality (\ref{eqNC}) takes the form
\begin{equation}\label{eqNSE_2}
   \int_{\mathbb{R}^n}\frac{|u|^2}{|x||x'|}dx\le C\int_{\mathbb{R}^n}|\nabla u|^2\frac{|x'|}{|x|}dx, 
\end{equation}
which strengthens the Hardy inequality
\[
   \int_{\mathbb{R}^n}\frac{|u|^2}{|x|^2}dx\le C\int_{\mathbb{R}^n}|\nabla u|^2dx. 
\]
Inequality (\ref{eqNSE_2}) was among the special cases proved in \cite{LY} as mentioned above. 

The necessity of (\ref{eqNCA_5}) is proved by dimensional analysis. The necessity of (\ref{eqNCA_6_1}) and (\ref{eqNCA_6_2}) are deduced from the fact that if (\ref{eqNC}) holds for $u$, then it also holds for 
$u(x)\mapsto u(x_1+S, x_2, ..., x_{n-1}, x_n+T)$  for all $S, T>0$. The necessity of (\ref{eqNCA_6_3}) and (\ref{eqNCA_7}) are more delicate. 
  
  A main ingredient of the proof of Theorem \ref{thm_main}, even in the case when $q\ge 1$, 
   is the above mentioned improvement of Theorem A from $q\ge 1$ to $q>0$, 
  which is stated as the following theorem.

 \begin{thm}\label{thmD_2}
   For  $n\ge 1$, 
    let $s, p, q, \gamma_1, \gamma_2, \gamma_3$,  and $a$ satisfy (\ref{eqNCA_1}) and (\ref{eqNCB_2}). 
        Then there exists some positive constant $C$,      such that 
               \begin{equation}\label{eqD_2_2}
     \||x|^{\gamma_1}u\|_{L^s(\mathbb{R}^n)}\le C\||x|^{\gamma_2}\nabla u\|_{L^p(\mathbb{R}^n)}^a\||x|^{\gamma_3}u\|_{L^q(\mathbb{R}^n)}^{1-a} 
  \end{equation}
  holds for all $u\in C^1_c(\mathbb{R}^n)$ if and only if  (\ref{eqNCB_3})-(\ref{eqNCB_5}) hold.   Furthermore, on any compact set in the parameter space in which (\ref{eqNCA_1}) and (\ref{eqNCB_2}) hold, 
    the constant $C$ is bounded. 
\end{thm}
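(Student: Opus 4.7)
The plan is to deduce Theorem~\ref{thmD_2} from Theorem~A. For $q\ge 1$ the two statements coincide, so the new content lies in the range $0<q<1$. Necessity of (\ref{eqNCB_3})--(\ref{eqNCB_5}) follows from the same test functions used in Theorem~A (scaling $u(x)\mapsto u(\lambda x)$, far-field translates, and concentrated/dilated $C_c^\infty$ cutoffs); none of these arguments requires $q\ge 1$, so they transfer verbatim.

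For sufficiency with $0<q<1$, the idea is to apply Theorem~A with an auxiliary exponent $q_1\ge 1$ in place of $q$, and then use H\"older's inequality to re-express the resulting $L^{q_1}$-norm as a product of the desired $L^q$- and $L^s$-norms, absorbing the latter into the left-hand side. In the generic case $q<1<s$ I pick $q_1\in[1,s)$, set $\theta=(s-q_1)/(s-q)\in(0,1)$, and define $\gamma_3'$ by $q_1\gamma_3'=\theta q\gamma_3+(1-\theta)s\gamma_1$. Solving (\ref{eqNCB_3}) for the new tuple $(s,p,q_1,\gamma_1,\gamma_2,\gamma_3',a_1)$ yields $a_1=a(q_1-(1-\theta)s)/(q_1-a(1-\theta)s)\in[0,1]$, and a short computation (using $\theta q+(1-\theta)s=q_1$ together with the original (\ref{eqNCB_4})) verifies the remaining hypotheses of Theorem~A. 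Theorem~A then gives
\begin{equation*}
\||x|^{\gamma_1}u\|_{L^s} \le C\,\||x|^{\gamma_2}\nabla u\|_{L^p}^{a_1}\,\||x|^{\gamma_3'}u\|_{L^{q_1}}^{1-a_1},
\end{equation*}
and H\"older's inequality with exponents $1/\theta$ and $1/(1-\theta)$ gives
\begin{equation*}
\||x|^{\gamma_3'}u\|_{L^{q_1}}^{q_1} \le \||x|^{\gamma_3}u\|_{L^q}^{\theta q}\,\||x|^{\gamma_1}u\|_{L^s}^{(1-\theta)s}.
\end{equation*}
Substituting and absorbing the power of $\||x|^{\gamma_1}u\|_{L^s}$ to the left (the exponent on the right is strictly less than $1$ because $q_1>q$) produces the desired inequality with exponents $a$ and $1-a$.

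The main obstacle is the parameter bookkeeping in the remaining cases. The boundary cases of (\ref{eqNCB_5}) and the endpoints $a\in\{0,1\}$ can force $\theta$ or $a_1$ to degenerate values and must be handled by continuous limits or by noting they already lie within the reach of Theorem~A. The most delicate case is $s\le 1$, where the interval $(q,s)$ no longer contains any $q_1\ge 1$; here I would first apply a substitution $v=|u|^\lambda\operatorname{sgn}(u)$ with $\lambda\ge 1$ (regularized as $(u^2+\epsilon)^{\lambda/2}\operatorname{sgn}(u)$ and passed to $\epsilon\to 0^+$) chosen so that the transformed LHS exponent $s/\lambda$ is at least $1$, and then run the above argument on $v$. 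The uniform bound on $C$ over compact subsets of the parameter space follows from the continuous dependence of all constructed quantities on the parameters together with the corresponding statement in Theorem~A.
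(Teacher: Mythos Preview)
Your bootstrap from Theorem~A via H\"older interpolation is a genuinely different route from the paper's, and in the range $q<1<s$ with $0<a<1$ it works as you describe: the auxiliary tuple $(s,p,q_1,a_1,\gamma_1,\gamma_2,\gamma_3')$ does satisfy (\ref{eqNCB_2})--(\ref{eqNCB_5}) with $q_1\ge 1$, Theorem~A applies, and after substituting the H\"older bound and absorbing one recovers exactly the exponents $a,1-a$. The paper instead does \emph{not} reduce to Theorem~A at all: it gives a direct argument on dyadic annuli $R_k=\{2^{k-1}\le|x|<2^k\}$, first proving (for $p=1$) a recursion $A_k\le\theta A_{k+1}+C(M_k+M_{k+1})^{as}N_k^{(1-a)s/q}$ with $\theta<1$ via Sobolev--Poincar\'e on each annulus, summing the resulting geometric series, and then lifting to general $p$ by applying the $p=1$ case to $|u|^{s/\bar s}$ with $1/\bar s=1/s+1-1/p$. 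That argument is entirely insensitive to whether $s$ or $q$ lies below $1$.

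Your treatment of the case $s\le 1$, however, has a real gap. You claim $v=|u|^\lambda\operatorname{sgn}(u)$ with $\lambda\ge 1$ makes ``the transformed LHS exponent $s/\lambda$ at least $1$'', but with $\lambda\ge 1$ and $s\le 1$ one has $s/\lambda\le s\le 1$: the substitution moves the exponent in the wrong direction. Taking $\lambda<1$ is not an option either, since $|u|^\lambda$ is then only H\"older continuous and the gradient of the regularization $(u^2+\epsilon)^{\lambda/2}$ blows up near $\{u=0\}$ as $\epsilon\to 0$, so the $L^p$ gradient norm is not controlled. More structurally, when $s\le 1$ there is no $q_1\in[1,s)$, so your key H\"older step interpolating $L^{q_1}$ between $L^q$ and $L^s$ is simply unavailable; reversing the interpolation (bounding $L^s$ by $L^q$ and some $L^{q_1}$ with $q_1\ge 1>s$) leaves an $L^{q_1}$ term that cannot be absorbed. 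The regime $s\le 1$, $0<q<1$ is nonempty under (\ref{eqNCA_1}), (\ref{eqNCB_2})--(\ref{eqNCB_5}): for instance $n=1$, $p=1$, $a=\tfrac12$, $q=\tfrac12$, $\gamma_2=\gamma_3=0$, $\gamma_1=-\tfrac12$ gives $s=\tfrac23$. So this case is not a mere bookkeeping detail, and your argument as written does not cover it.
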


Our proof of the sufficiency part of Theorem \ref{thmD_2}, which yields the extension from $q\ge 1$ to $q>0$,  is quite different from the proof of Theorem A in \cite{CKN}.

  Another main ingredient of the proof of Theorem \ref{thm_main} is the following nonlinear Poincar\'{e} inequality.

\begin{thm}[A nonlinear Poincar\'{e} inequality]\label{thm1-new}
For $n\ge 1$ and 
$0<\lambda<\infty$, assume $1\le p\le \infty$ if    $1\le \lambda<\infty$, and $\max\{1, n/(1+n\lambda)\}\le p\le \infty$ if $0<\lambda<1$. 
Let $(M, g)$ be an $n$-dimensional smooth compact Riemannian manifold without boundary,  $\Omega=M$ or 
$\Omega\subset M$ be an open connected set with Lipschitz boundary, and $S\subset \Omega$ has positive measure $|S|$. 
Then there exists some positive constant $C$,  depending only on $p, \lambda$, $\Omega$ and $S$, 
such that for every nonnegative $w\in W^{1,p}(\Omega)$,
\begin{equation}\label{est1-new}
\|w-\big(\avint_Sw^{1/\lambda}\big)^{\lambda}\|_{ L^p(\Omega) }\le C \|\nabla w \|_{ L^p(\Omega) },
\end{equation}
On the other hand, if $0<\lambda<1$ and $0<p<n/(1+n\lambda)$ or $0<\lambda<\infty$ and $0<p<1$, there does not exist $C$ for which
 (\ref{est1-new}) holds.
\end{thm}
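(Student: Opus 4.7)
The plan is to prove sufficiency by a contradiction and compactness argument, and sharpness by two explicit concentration-type counterexamples.

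For sufficiency, I would suppose (\ref{est1-new}) fails, producing a sequence $\{w_k\}\subset W^{1,p}(\Omega)$ of nonnegative functions with
\[
\|w_k - c_k\|_{L^p(\Omega)} > k\,\|\nabla w_k\|_{L^p(\Omega)}, \qquad c_k := \Big(\avint_S w_k^{1/\lambda}\Big)^\lambda.
\]
Since both sides are homogeneous of degree $1$ in $w$, I rescale so that $\|\nabla w_k\|_{L^p}\to 0$ and $\|w_k-c_k\|_{L^p}\ge 1$. The degenerate case $c_k=0$ (i.e.\ $w_k\equiv 0$ on $S$) reduces to a standard Poincar\'e inequality with vanishing restricted trace. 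If $\{c_k\}$ is bounded, then $\{w_k\}$ is bounded in $W^{1,p}$, and Rellich--Kondrachov plus connectedness of $\Omega$ extracts a subsequence with $w_k\to a$ in $L^p$ for some constant $a\ge 0$. If $c_k\to\infty$, I rescale again, $\tilde w_k := w_k/c_k$, so that $\tilde c_k=1$ and $\tilde w_k\to 1$ in $W^{1,p}$; a linearization around the constant $1$, using that the Jensen gap $\avint_S\tilde w_k-1$ is of size $O(\|\tilde w_k-1\|_{L^2(S)}^2)$ together with the standard Poincar\'e--Wirtinger inequality with $S$-average, then contradicts the failure ratio.

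The heart of the argument is the continuity step $\avint_S w_k^{1/\lambda}\to a^{1/\lambda}$ (which forces $c_k\to a$ and hence $\|w_k-c_k\|_{L^p}\to 0$, a contradiction). For $\lambda\ge 1$ one exploits that $x\mapsto x^{1/\lambda}$ is concave and subadditive,
\[
|x^{1/\lambda}-y^{1/\lambda}|\le |x-y|^{1/\lambda}, \qquad x,y\ge 0,
\]
combined with H\"older and $|S|<\infty$ to transfer $L^1(\Omega)$ convergence of $w_k$ into $L^1(S)$ convergence of $w_k^{1/\lambda}$; here only $p\ge 1$ is needed. For $0<\lambda<1$ the map is instead superlinear and I would use
\[
|x^{1/\lambda}-y^{1/\lambda}|\le C_\lambda\,\bigl(x^{1/\lambda-1}+y^{1/\lambda-1}\bigr)\,|x-y|,
\]
followed by H\"older with conjugate exponents $1/(1-\lambda)$ and $1/\lambda$. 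The factor $\|w_k^{1/\lambda-1}\|_{L^{1/(1-\lambda)}}=\|w_k\|_{L^{1/\lambda}}^{(1-\lambda)/\lambda}$ is controlled by the continuous Sobolev embedding $W^{1,p}(\Omega)\hookrightarrow L^{1/\lambda}(\Omega)$, which holds exactly when $p\ge n/(1+n\lambda)$ (for $p<n$; the subcases $p\ge n$ are easier via Morrey or $L^q$ for all finite $q$). The other factor $\|w_k-a\|_{L^{1/\lambda}(S)}\to 0$ follows by applying the same embedding to $w_k-\avint_\Omega w_k\to 0$ in $W^{1,p}$. Crucially, only the \emph{continuity} of the Sobolev embedding is used, so the critical endpoint $p=n/(1+n\lambda)$ --- where Rellich compactness into $L^{1/\lambda}$ is lost --- is handled at no extra cost.

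For sharpness I would construct two families of counterexamples. For $0<\lambda<1$ and $p<n/(1+n\lambda)$, take $w_k(x)=\phi(k(x-x_0))$ in a local chart around an interior point $x_0\in S$, with $\phi\in C_c^\infty(\mathbb{R}^n)$ nonnegative and nontrivial. A direct scaling computation gives $\|\nabla w_k\|_{L^p}\sim k^{1-n/p}$ and $c_k\sim k^{-n\lambda}$; the hypothesis $p<n/(1+n\lambda)$ forces $p\lambda<1$, under which the constant $c_k$ dominates $w_k$ in $L^p(\Omega)$, so $\|w_k-c_k\|_{L^p}\sim k^{-n\lambda}$, and the ratio $\sim k^{n/p-n\lambda-1}\to\infty$ --- exactly the failure condition. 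For $0<p<1$ and arbitrary $\lambda>0$, take a smoothed step function $w_k(x)=f(k\,d(x))$, where $d$ is a smooth signed-distance function to a hypersurface separating $S$ from a point of $\Omega\setminus\overline S$ and $f\in C^\infty(\mathbb{R})$ transitions monotonically from $0$ on $(-\infty,0]$ to $1$ on $[1,\infty)$. Transverse scaling yields $\|\nabla w_k\|_{L^p}^p\sim k^{p-1}\to 0$ (precisely because $p<1$), while $\avint_S w_k^{1/\lambda}=0$ and $\|w_k\|_{L^p}$ stays bounded below --- so the inequality fails for any constant.

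The main technical obstacles are the divergent case $c_k\to\infty$ in the compactness argument --- resolved by the rescaling $\tilde w_k=w_k/c_k$ and a Jensen-gap linearization around $1$ --- and the critical Sobolev endpoint $p=n/(1+n\lambda)$ for $0<\lambda<1$, which is handled by the observation that the Sobolev embedding remains bounded (even though not compact) at the critical exponent.
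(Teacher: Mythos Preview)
Your overall architecture---contradiction/compactness for sufficiency, concentration for sharpness---matches the paper's, and your treatment of the bounded-$c_k$ case and of the $0<\lambda<1$ regime is essentially the paper's Case~2 argument (both of you use the continuity of the critical Sobolev embedding $W^{1,p}\hookrightarrow L^{1/\lambda}$ applied to the strongly convergent sequence $w_k-\avint_\Omega w_k\to 0$). Where you genuinely diverge is the $\lambda>1$, $c_k\to\infty$ subcase: the paper does \emph{not} rescale, but instead writes $u_j=\bar u_j+\eta_j$ with $\int_S\eta_j=0$ and runs a chain of four lemmas showing successively that $(\bar u_j)^{\lambda-1}\int_S|\eta_j|\to 0$, that $\int_S(\eta_j^+)^\lambda\ge 1/C$, that $|\{\eta_j>\epsilon\}|\to 0$, and that $\int[(\eta_j-\epsilon)^+]^\lambda\to 0$, reaching a contradiction. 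Your rescale-and-linearize route is shorter and more transparent; the paper's route is more hands-on but avoids any second-order Taylor bound.

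One imprecision to fix: your Jensen-gap claim $\avint_S\tilde w_k-1=O(\|\tilde w_k-1\|_{L^2(S)}^2)$ is not available in general, since for $p=1$ and $n\ge 3$ (with $\lambda\ge 1$) you do not have $W^{1,1}\hookrightarrow L^2$. The correct statement is that the Taylor remainder $R(t)=t^{1/\lambda}-1-\tfrac{1}{\lambda}(t-1)$ satisfies $|R(t)|\le C|t-1|^\sigma$ for any $1<\sigma\le 2$ when $\lambda\ge 1$, and $|R(t)|\le C(|t-1|^2+|t-1|^{1/\lambda})$ when $0<\lambda<1$; choose $\sigma\in(1,\min(2,p^*)]$ (respectively, use the $L^{1/\lambda}$ norm) so that the relevant norms are controlled by $\|\tilde w_k-1\|_{W^{1,p}}$. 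With this adjustment your absorption argument $A_k\le CB_k+C(A_k+B_k)^\sigma$ goes through for every $p\ge 1$. Two smaller points: in the first counterexample you should take $x_0$ to be a Lebesgue density point of $S$ (which need not have interior), as the paper does; and your second counterexample requires $S$ to lie on one side of a hypersurface, so you should specify a concrete choice of $(\Omega,S)$---the paper uses $\Omega=S=[-1,1]^n$ with an odd perturbation instead, which works even when $S=\Omega$.
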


Theorem \ref{thm1-new} gives necessary and sufficient conditions on $(\lambda, p)\in (0, \infty)\times (0, \infty]$ for (\ref{est1-new}) to hold.

\begin{cor}\label{cor_new}
  For  $n\ge 1$,   $1\le p\le \infty$, and $0<q<\infty$,   let $(M, g)$ be an $n$-dimensional smooth compact Riemannian manifold without boundary,  $\Omega\subset M$ be an open connected set with Lipschitz boundary, and $S\subset \Omega$ has positive measure $|S|$. 
Then there exists some positive constant $C$,  depending only on $p, q, \Omega$ and $S$, 
such that for every nonnegative $w\in W^{1,p}(\Omega)$,
\begin{equation}\label{est1-newcor}
\|w\|_{L^p(\Omega)}\le \big(\avint_Sw^{q}\big)^{1/q}|\Omega|^{1/q}+ 
C \|\nabla w \|_{ L^p(\Omega) }. 
\end{equation}
\end{cor}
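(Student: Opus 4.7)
\noindent\textbf{Proof proposal for Corollary \ref{cor_new}.} The plan is to deduce the corollary directly from Theorem~\ref{thm1-new} by choosing the auxiliary parameter $\lambda$ as a function of $q$ so that $(p,\lambda)$ always lies in the admissible range, then using the triangle inequality and, when $q>1$, Jensen's inequality. Since $\Omega$ is fixed and the constant $C$ is allowed to depend on $p,q,\Omega,S$, the discrepancy between $\|1\|_{L^p(\Omega)}=|\Omega|^{1/p}$ and the factor $|\Omega|^{1/q}$ appearing on the right will be absorbed into $C$.

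In the case $0<q\le 1$, I would set $\lambda=1/q\ge 1$. Because $\lambda\ge 1$, Theorem~\ref{thm1-new} is available for every $1\le p\le\infty$, and it directly produces the desired constant:
\[
\Bigl\|w-\bigl(\avint_S w^{q}\bigr)^{1/q}\Bigr\|_{L^p(\Omega)}\le C\|\nabla w\|_{L^p(\Omega)}.
\]
The triangle inequality then gives
\[
\|w\|_{L^p(\Omega)}\le \bigl(\avint_S w^{q}\bigr)^{1/q}|\Omega|^{1/p}+C\|\nabla w\|_{L^p(\Omega)},
\]
which, after absorbing $|\Omega|^{1/p-1/q}$ into the constant, yields (\ref{est1-newcor}).

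For $q>1$ the choice $\lambda=1/q<1$ would force $p\ge nq/(n+q)$, which need not hold. I would instead apply Theorem~\ref{thm1-new} with $\lambda=1$, which is admissible for all $1\le p\le\infty$ and produces the standard mean-Poincar\'{e} type bound
\[
\Bigl\|w-\avint_S w\Bigr\|_{L^p(\Omega)}\le C\|\nabla w\|_{L^p(\Omega)}.
\]
Because $q>1$, Jensen's inequality applied to the convex function $t\mapsto t^{q}$ gives $\avint_S w\le \bigl(\avint_S w^{q}\bigr)^{1/q}$, so that the triangle inequality again yields (\ref{est1-newcor}) after the same absorption of $|\Omega|^{1/p-1/q}$.

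There is no substantive obstacle: once Theorem~\ref{thm1-new} is in hand the proof is a two-line consequence, and the only point that requires a moment's thought is that for $q>1$ one should not try to take $\lambda=1/q$ but rather retreat to $\lambda=1$ and use Jensen to recover the nonlinear averaging on the right-hand side.
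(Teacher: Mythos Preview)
Your proof is correct and follows essentially the same route as the paper: for $q\le 1$ apply Theorem~\ref{thm1-new} with $\lambda=1/q$ and the triangle inequality, and for $q>1$ apply the $\lambda=1$ case together with the power-mean inequality $\avint_S w\le(\avint_S w^q)^{1/q}$ (the paper calls this H\"older, you call it Jensen---same thing here). Note that both arguments naturally produce the factor $|\Omega|^{1/p}$ rather than $|\Omega|^{1/q}$; the exponent $1/q$ in the stated inequality appears to be a typo, and your ``absorption into $C$'' does not actually repair it since $C$ multiplies only the gradient term, but this is an issue with the statement, not with your reasoning.
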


 \bigskip
 
 
 Theorem \ref{thm_main} grows out of 
  our study in \cite{LY} on the stability of 
  solutions to
  Navier-Stokes equations. 
In  joint work with L. Li  \cite{LLY1, LLY}, we classified $(-1)$-homogeneous axisymmetric no-swirl solutions of the $3$D incompressible stationary Navier-Stokes equations which are smooth in $\mathbb{R}^3$ away from the symmetry axis. All such solutions $u$ are of 
the following three mutually exclusive types:
 \begin{enumerate}
  \item[]Type 1.   Landau solutions, which satisfy $\displaystyle \sup_{|x|=1}|\nabla u(x)|<\infty$;
   \item[]Type 2. Solutions satisfying $\displaystyle 0<\limsup_{|x|=1,x'\to 0}|x'||\nabla u(x)|<\infty$; 
   \item[] Type 3. Solutions satisfying $\displaystyle \limsup_{|x|=1,x'\to 0}|x'|^2|\nabla u(x)|>0$.
 \end{enumerate}
 Karch and Pilarczyk \cite{Karch} proved the asymptotic stability of Landau solutions under $L^2$-perturbations. 
 In \cite{LY}, 
 we proved  the asymptotic stability of  Type 2 solutions under $L^2$-perturbations. 
  An important ingredient in our proof is the following improved version of Hardy's inequality 
\begin{equation}\label{eqNSE_1}
  \int \frac{|u|^2}{|x||x'|}\le C\int |\nabla u|^2, 
\end{equation}
a weaker form of (\ref{eqNSE_2}).  
We expect that Theorem \ref{thm_main} will be useful in the study of the asymptotic stability of Type 3 solutions and the stability of Type 2 solutions in other function spaces. 
For related results on the stability of singular solutions  
 to the Navier-Stokes equations, see \cite{CKPW}, \cite{KPS}, \cite{LZZ}, \cite{ZZ} and the reference therein. 
 
 In the special case when $a=1$ and $ 1\le s=p<n$, Theorem \ref{thm_main} says that 
   under the conditions 
  \begin{equation}\label{eqB_2_0}
    p\ge 1, \quad \frac{1}{p}+\frac{\mu}{n-1}>0,\quad  \frac{1}{p}+\frac{\gamma_1+\alpha}{n}>0,
 \end{equation}
 \begin{equation}\label{eqB_3_0}
    \gamma_1+\alpha=\gamma_2+\mu-1, \quad  \gamma_1\le \gamma_2,
    \end{equation}
   inequality 
   \begin{equation}\label{eqB_1_0}
   \||x|^{\gamma_1}|x'|^{\alpha}u\|_{L^p(\mathbb{R}^n)}\le C\||x|^{\gamma_2}|x'|^{\mu}\nabla u\|_{L^p(\mathbb{R}^n)}
 \end{equation}
 holds for $u\in C_c^1(\mathbb{R}^n)$. 
    This was proved in \cite{LY} when $1\le p<n$  as mentioned earlier. The proof there applies to $p\ge n$ as well, and we present the proof concisely below.

    The necessity of  $\gamma_1+\alpha=\gamma_2+\mu-1$  follows from a  dimensional analysis argument.  The necessity of $\gamma_1\le \gamma_2$ can also be seen easily: take a unit ball $B_i$ centered at $x_i=(x_i', i)$ with $2<|x_i'|<3$ and let $u_i(\cdot)=v(\cdot+x_i)$.

Let $x=(r, \theta)$ in spherical coordinates. 
Since $(\gamma_1+\alpha)p+n-1>-1$, we have, for each fixed $\theta$, that 
\begin{equation}
   \int_{0}^{\infty}r^{(\gamma_1+\alpha)p+n-1}|u|^pdr\le C\int_{0}^{\infty}r^{(\gamma_1+\alpha+1)p+n-1}|\partial_ru|^pdr. 
  \end{equation}
For any $0<\epsilon<\pi/4$, let $K_{\epsilon}:=\{x\in\mathbb{R}^n\mid |x'|\le |x|\sin\epsilon \}$. Integrate the above  in $\theta$ 
on $(\mathbb{R}^{n}\setminus K_{\epsilon})\cap\mathbb{S}^{n-1}$, we have, using $ |x|\sin\epsilon \le |x'|\le |x|$ in $\mathbb{R}^n\setminus K_{\epsilon}$ and $\gamma_1+\alpha+1=\gamma_2+\mu$, that 
 \begin{equation}\label{eqB_1}
    \||x|^{\gamma_1}|x'|^{\alpha}u\|_{L^p(\mathbb{R}^n\setminus K_{\epsilon})}\le C\||x|^{\gamma_2}|x'|^{\mu}\partial_ru\|_{L^p(\mathbb{R}^n\setminus K_{\epsilon})}.
 \end{equation}
 Since $\int_{K_{2\epsilon}\setminus K_{\epsilon}}|x|^{\gamma_1 p}|x'|^{\alpha p}|u|^pdx=\int_{\epsilon}^{2\epsilon}\int_{\partial K_{\delta}}|x|^{\gamma_1 p}|x'|^{\alpha p}|\nabla u|^p\frac{|x|}{\cos\delta}d\sigma(x)d\delta$,  there is some 
 $\epsilon<\delta<2\epsilon$, such that 
\begin{equation}\label{eqB_2}
\||x|^{\gamma_1}|x'|^{\alpha}|x|^{1/p}u\|_{L^p(\partial K_{\delta})}
\le C\||x|^{\gamma_2}|x'|^{\mu} \partial_r u\|_{L^p(K_{2\epsilon}\setminus K_{\epsilon})}.
\end{equation}
Next, let $x=(r, \theta)=(r, \theta_1, \omega)$,  where $r=|x|$, $\omega\in\mathbb{S}^{n-2}$ and $\theta_1$ is the polar angle, i.e. the angle between the $x_n$-axis and the ray from the origin to $x$. Then $|x'|=|x|\sin\theta_1$, and  
\[
   |u(r, \theta_1,\omega)|^p- |u(r, \delta, \omega)|^p=-\int_{\theta_1}^{\delta}\partial_t|u(r, t, \omega)|^pdt\le \int_{\theta_1}^{\delta }p|u|^{p-1}|\partial_t u|dt.
\]
We multiply the above by $\theta_1^{\alpha p+n-2}$ and integrate in $\theta_1$ over $[0, \delta]$. 
We know from (\ref{eqB_2_0}) and (\ref{eqB_3_0}) that $\alpha p+n-1>0$. So we have 
\[
   \int_{0}^{\delta}\theta_1^{\alpha p+n-2}\int_{\theta_1}^{\delta }p|u|^{p-1}|\partial_t u|dt d\theta_1
  \le C\int_{0}^{\delta}\theta_1^{\alpha p+n-1}|u|^{p-1}|\partial_{\theta_1} u|d\theta_1,
\]
and 
\[
    \int_{0}^{\delta} \theta_1^{\alpha p+n-2}|u(r, \theta_1,\omega)|^pd\theta_1\le C|u(r, \delta,\omega)|^p+C\int_{0}^{\delta}\theta_1^{\alpha p+n-1}|u|^{p-1}|\partial_{\theta_1} u|d\theta_1.
\]
Using Cauchy-Schwarz inequality, we have
\[
   \int_{0}^{\delta} \theta_1^{\alpha p+n-2}|u(r, \theta_1,\omega)|^pd\theta_1\le C|u(r, \delta,\omega)|^p+C\int_{0}^{\delta}\theta_1^{(\alpha+1)p+n-2}|\partial_{\theta_1}u(r, \theta_1,\omega)|^pd\theta_1.
\]
Multiplying the above by $r^{(\gamma_1+\alpha)p+n-1}$ and integrating in $r$ and $\omega$, we have, using the fact that $\gamma_1+\alpha+1=\gamma_2+\mu$,  $\theta_1^{\alpha+1}\le \theta_1^{\mu}$ on $[0, \delta]$ in view of $\mu\le \alpha+1$, and $|\partial_{\theta_1}u|/r\le |\nabla u|$, that 
\[
   \||x|^{\gamma_1}|x'|^{\alpha}u\|_{L^p(K_{\delta})}\le C\||x|^{\gamma_1}|x'|^{\alpha}|x|^{1/p}u\|_{L^p(K_{\delta})}+C\||x|^{\gamma_2}|x'|^{\mu}\nabla u\|_{L^p(K_{\delta})}. 
\] 
Inequality (\ref{eqB_1_0}) follows from (\ref{eqB_1}), (\ref{eqB_2}) and the above. 
 
 For  the  sufficiency part of Theorem \ref{thm_main} when $s\ne p$ or $0<a<1$, the proof is more involved. 
 Let us look at  a simple case  where $n=2$, $a=1$, $\gamma_1=\gamma_2=\gamma_3=0$, $s=2$, $p=1$,  $\mu=\alpha>-1/2$.   The following lemma is a slightly stronger version of Theorem \ref{thm_main} in this case. 
  
 \begin{prop}\label{lemS_1}
   For $\alpha>-1/2$, there exists some constant $C$ depending only on $\alpha$ such that    \begin{equation}\label{eqS_1_1}
       \||x_1|^{\alpha}u\|^2_{L^2([0, 1]^2)}\le C\||x_1|^{\alpha}\partial_{x_1}u\|_{L^1([0, 1]^2)}\||x_1|^{\alpha}\partial_{x_2}u\|_{L^1([0, 1]^2)}
   \end{equation}
   holds for all $u\in C^1([0, 1]^2)$ satisfying $u(1, x_2)=u(x_1, 1)=0$, $0\le x_1, x_2\le 1$.  \end{prop}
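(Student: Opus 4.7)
The plan is a weighted Gagliardo-type argument. Assume $u\ge 0$ without loss of generality. From the vanishing conditions and the fundamental theorem of calculus,
\[
x_1^\alpha |u(x_1,x_2)| \le \int_0^1 x_1^\alpha |\partial_2 u(x_1,t)|\,dt =: H(x_1), \qquad |u(x_1,x_2)| \le \int_{x_1}^1 |\partial_1 u(s,x_2)|\,ds,
\]
with $\int_0^1 H\,dx_1 = \|x_1^\alpha \partial_2 u\|_{L^1}$. In the easy range $\alpha\ge 0$, since $x_1^\alpha\le s^\alpha$ for $0\le x_1\le s$, the weight can be absorbed into the $s$-integral to obtain $x_1^\alpha |u(x_1,x_2)|\le F(x_2):=\int_0^1 s^\alpha |\partial_1 u(s,x_2)|\,ds$, with $\int_0^1 F\,dx_2=\|x_1^\alpha \partial_1 u\|_{L^1}$. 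Multiplying gives $(x_1^\alpha u)^2\le F(x_2)H(x_1)$ pointwise, and integration over $[0,1]^2$ yields the desired inequality with $C=1$.

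For $-1/2<\alpha<0$, the monotonicity $x_1^\alpha\le s^\alpha$ is reversed and this pointwise absorption fails. I would then begin with integration by parts in $x_1$: since $2\alpha+1>0$ (so $x_1^{2\alpha+1}$ vanishes at $x_1=0$) and $u(1,\cdot)=0$, the boundary terms drop and
\[
(2\alpha+1)\int x_1^{2\alpha}u^2\,dx = -2\int x_1^{2\alpha+1}\,u\,\partial_1 u\,dx.
\]
Using $x_1^{2\alpha+1}=x_1^\alpha\cdot x_1^{\alpha+1}\le x_1^\alpha$ on $[0,1]$ (valid since $\alpha+1>0$) together with the bound $x_1^\alpha|u|\le H(x_1)$ from the $\partial_2$-direction reduces matters to estimating $\int H(x_1)\,|\partial_1 u(x_1,x_2)|\,dx$.

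The main obstacle is closing this last estimate by the target product $\|x_1^\alpha\partial_1 u\|_{L^1}\|x_1^\alpha\partial_2 u\|_{L^1}$, since a straightforward H\"older inequality gives the wrong weight distribution. The natural route is regularization: apply the standard 2D Gagliardo inequality to $v_\epsilon:=(x_1+\epsilon)^\alpha u$, which is smooth on $[0,1]^2$ and vanishes on $\{x_1=1\}\cup\{x_2=1\}$, yielding $\|v_\epsilon\|_{L^2}^2\le \|\partial_1 v_\epsilon\|_{L^1}\|\partial_2 v_\epsilon\|_{L^1}$. Then $\|\partial_2 v_\epsilon\|_{L^1}=\|(x_1+\epsilon)^\alpha\partial_2 u\|_{L^1}\to \|x_1^\alpha\partial_2 u\|_{L^1}$, while $\partial_1 v_\epsilon=\alpha(x_1+\epsilon)^{\alpha-1}u+(x_1+\epsilon)^\alpha\partial_1 u$ contains a spurious term $\alpha(x_1+\epsilon)^{\alpha-1}u$ that must be dominated, uniformly in $\epsilon$, by $(x_1+\epsilon)^\alpha\partial_1 u$ via a shifted weighted Hardy inequality exploiting $u(1,x_2)=0$. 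Passing to the limit using $\alpha>-1/2$ to secure the finiteness and convergence of the weighted integrals then completes the proof; establishing the $\epsilon$-uniform Hardy estimate in the regime $\alpha<0$ (where the standard 1D Hardy inequality does not apply without vanishing of $u$ at $x_1=0$) will be the technical heart of the argument.
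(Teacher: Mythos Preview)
Your argument for $\alpha\ge 0$ is correct and in fact cleaner than the paper's (it gives $C=1$ directly, with no change of variables or dyadic decomposition).

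For $-1/2<\alpha<0$, however, there is a genuine gap. The shifted Hardy inequality you need,
\[
\int_0^1 (x_1+\epsilon)^{\alpha-1}|u(x_1,x_2)|\,dx_1 \le C\int_0^1 (x_1+\epsilon)^{\alpha}|\partial_1 u(x_1,x_2)|\,dx_1
\]
uniformly in $\epsilon>0$ with only $u(1,x_2)=0$, is simply false. Test it with $u(x_1)=1-x_1$: the right side stays bounded as $\epsilon\to 0$ (since $\alpha+1>0$), while the left side is at least $\frac12\int_0^{1/2}(x_1+\epsilon)^{\alpha-1}\,dx_1=\frac{1}{2|\alpha|}\bigl(\epsilon^{\alpha}-(1/2+\epsilon)^{\alpha}\bigr)\to\infty$. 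The underlying reason is that $L^1$ Hardy with weight exponent $\alpha-1<-1$ requires vanishing of $u$ at $x_1=0$, not at $x_1=1$; your boundary data are on the wrong side, and no regularization trick will repair this.

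The paper handles the full range $\alpha>-1/2$ differently: it first changes variables $y_1=x_1^{2\alpha+1}$, reducing (\ref{eqS_1_1}) to
\[
\|\tilde u\|_{L^2([0,1]^2)}^2\le C\|y_1^{\beta}\partial_{y_1}\tilde u\|_{L^1}\|y_1^{-\beta}\partial_{y_2}\tilde u\|_{L^1},\qquad \beta=\frac{\alpha}{2\alpha+1}<\tfrac12,
\]
and then runs a dyadic iteration in $y_1$. On each strip $R_k=[2^{-k-1},2^{-k}]$ one compares $\tilde u(y_1,y_2)$ with its value at a nearby $\xi\in R_{k-1}$ via the fundamental theorem, obtaining a recursion $A_k\le \theta A_{k-1}+CQ_kP$ with $\theta<1$ for the squared $L^2$ norms $A_k$; summing the geometric series gives the result. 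This dyadic comparison is what replaces the unavailable Hardy inequality in the range $\alpha<0$.
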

 \begin{proof}
   Make a change of variables $y_1=x_1^{2\alpha+1}$, $y_2=x_2$, and $\tilde{u}(y_1, y_2)=u(x_1, x_2)$.     Then  (\ref{eqS_1_1}) is equivalent to 
    \begin{equation}\label{eqS_1_2}
       \|\tilde{u}\|^2_{L^2([0, 1]^2)}\le C\||y_1|^{\beta}\partial_{y_1}\tilde{u}\|_{L^1([0, 1]^2)}\||y_1|^{-\beta}\partial_{y_2}\tilde{u}\|_{L^1([0, 1]^2)}, 
           \end{equation}
            for  $\beta<1/2$ (where $\beta:=\alpha/(2\alpha+1)$)  and 
           $\tilde{u}\in C^1([0, 1]^2)$ satisfying $\tilde{u}(y_1, 1)=\tilde{u}(1, y_2)=0$, $0\le y_1, y_2\le 1$. 
           
           For $k\in \mathbb{N}$, let $R_k=[2^{-k-1}, 2^{-k}]$, 
           \[
              A_k:=\|\tilde{u}\|_{L^2(R_k\times [0, 1])}, \quad P_k:=\|y_1^{\beta}\partial_{y_1}\tilde{u}\|_{L^1(R_k\times [0, 1])}, \quad Q_k:=\|y_1^{-\beta}\partial_{y_2}\tilde{u}\|_{L^1(R_k\times [0, 1])},            \]
           and 
           \[
              P:=\||y_1|^{\beta}\partial_{y_1}\tilde{u}\|_{L^1([0, 1]^2)}, \quad Q:=\||y_1|^{-\beta}\partial_{y_2}\tilde{u}\|_{L^1([0, 1]^2)}. 
           \]
           For any $y_1\in R_k$, $\xi\in R_{k-1}$ and $y_2\in [0, 1]$, we have 
                      \[
             \begin{split}
              |\tilde{u}(y_1, y_2)|^2 & =|\tilde{u}(y_1, y_2)||\tilde{u}(\xi, y_2)-\int_{y_1}^{\xi}\partial_{\eta}\tilde{u}(\eta, y_2)d\eta|\\
              & \le  |\tilde{u}(y_1, y_2)||\tilde{u}(\xi, y_2)|+Cy_1^{-\beta}|\tilde{u}(y_1, y_2)|\int_{y_1}^{\xi}\eta^{\beta}|\partial_{\eta}\tilde{u}(\eta, y_2)|d\eta\\
              & \le |\tilde{u}(y_1, y_2)||\tilde{u}(\xi, y_2)|+C\int_{0}^{1}y_1^{-\beta}|\partial_{y_2}\tilde{u}(y_1, y_2)|dy_2 \int_{R_k\cup R_{k-1}}\eta^{\beta}|\partial_{\eta}\tilde{u}(\eta, y_2)|d\eta. 
              \end{split}
           \]
           Taking $\int_{0}^{1}\avint_{R_{k-1}}\int_{R_k}\cdot dy_1d\xi dy_2$ of the above and using H\"{o}lder's inequality, we have 
           \[
             \begin{split}
              A_k & \le \frac{1}{|R_{k-1}|}\int_{0}^{1} \Big(\int_{R_k}|\tilde{u}(y_1, y_2)|dy_1\Big)\Big( \int_{R_{k-1}}|\tilde{u}(\xi, y_2)|d\xi\Big)dy_2+CQ_kP\\ 
                            & \le 
              \sqrt{\frac{|R_k|}{|R_{k-1}|}}\sqrt{A_kA_{k-1}}+CQ_kP  
                             \le \frac{1}{2\sqrt{2}}(A_k+A_{k-1})+CQ_kP. 
              \end{split}
           \]
           Thus 
           \[
              A_k\le \theta A_{k-1}+CQ_kP,  
           \]
           where $\theta=1/(2\sqrt{2}-1)<1$. 
           Suming over $k\ge 1$ gives  
                      \begin{equation}\label{eqS_1_3}
               \sum_{k=1}^{\infty}A_k\le 
               \frac{1}{1-\theta} A_0+CPQ\le CPQ. 
           \end{equation}
           where we have used $|\tilde{u}(y_1, y_2)|\le \int_{0}^1|\partial_{y_1}\tilde{u}(y_1, \cdot)|$ and  $|\tilde{u}(y_1, y_2)|\le \int_{0}^1|\partial_{y_2}\tilde{u}(\cdot, y_2)|$. 
           \end{proof}

 \medskip

 For the sufficiency part of Theorem \ref{thm_main} in general,  our first consideration was for $q\ge 1$. We were able to prove the sufficiency part of Theorem \ref{thm_main} for $q\ge 1$ in dimension $n$ provided Theorem A  for $q>0$ in dimension $n-1$, with the help of the nonlinear Poincar\'{e}'s inequality (Theorem \ref{thm1-new}).  We also proved  Theorem A for $q>0$ in dimension $n=1$ and therefore proved Theorem \ref{thm_main} for $q\ge 1$ in dimension $n=2$ as well as Theorem \ref{thm_main} 
 for axisymmetric $u$ and $q\ge 1$ in dimensions $n\ge 3$.

Next we established the sufficiency part of Theorem \ref{thm_main} for $q\ge 1$ in dimensions $n\ge 3$. A key step is to prove (\ref{eqNC}) on a cylinder $D:=\{x\in \mathbb{R}^n\mid |x'|\le 1, 0\le x_n\le 1\}$ when $\gamma_1=\gamma_2=\gamma_3=0$. 
  For simplicity, one may consider $u\in C^1_c(D)$ and the estimate is 
  \begin{equation}\label{eqS_2_0}
      \||x'|^{\alpha}u\|_{L^s(D)}\le C\||x'|^{\mu}\nabla u\|_{L^p(D)}^a\||x'|^{\beta}u\|_{L^{q}(D)}^{1-a}. 
  \end{equation} 
  The left hand side of the above can be written as 
  \[
 \begin{split}
   \||x'|^{\alpha}u\|_{L^s(D)}
 &  =\||x'|^{\frac{\alpha s}{\bar{s}}}|u|^{\frac{s}{\bar{s}}}\|_{L^{\bar{s}}(D)}^{\frac{\bar{s}}{s}}
  \le C\||x'|^{\frac{\alpha s}{\bar{s}}}(|u|^{\frac{s}{\bar{s}}}-|u^*|^{\frac{s}{\bar{s}}})\|_{L^{\bar{s}}(D)}^{\frac{\bar{s}}{s}}+ C\||x'|^{\frac{\alpha s}{\bar{s}}}|u^*|^{\frac{s}{\bar{s}}}\|_{L^{\bar{s}}(D)}^{\frac{\bar{s}}{s}}\\
&  =:C(I_1+I_2), 
 \end{split}
 \]
 where $1/\bar{s}=1/s+1-1/p$ and $u^*(x',x_n)=\avint_{|y'|=|x'|}u(y',x_n) d\sigma (y')$. 
 Since 
 Theorem \ref{thm_main} for $q\ge 1$
   holds for axisymmetric $u$, so does (\ref{eqS_2_0}). Thus  $I_2$ is  bounded by  the right hand side of (\ref{eqS_2_0}).  
The estimate that $I_1$ is bounded by the right hand side of (\ref{eqS_2_0}) follows from  a variant of the Caffarelli-Kohn-Nirenberg inequalities (see Theorem \ref{thm6_1}), using the fact that $0<\bar{s}\le s$.

 Later we proved Theorem A for $q>0$ in all dimensions and in turn proved Theorem \ref{thm_main}. This is the proof presented in this paper.

  \medskip
 
 In Section \ref{sec_2}, we prove the necessity part of Theorem \ref{thm_main} and \ref{thmD_2}. In Section \ref{sec_3}, we prove Theorem \ref{thm1-new} and Corollary \ref{cor_new}. In Section \ref{sec_4}, we prove the sufficiency part of Theorem \ref{thmD_2} by establishing Theorem 4.1, a more general result including  inequalities on cones. 
 In Section \ref{sec_5}, we prove the sufficiency part of Theorem \ref{thm_main}. In Section \ref{sec_6}, we give two variants of Theorem A and Theorem \ref{thm_main}. Some properties of the parameters used in the proofs are given in the appendix.

\section{Proof of the necessity parts of Theorem \ref{thm_main} and  \ref{thmD_2}}
\label{sec_2}

In this section, we prove the necessity parts of Theorem \ref{thm_main} and  \ref{thmD_2}. 
We first prove the necessity part of Theorem \ref{thm_main} by the following lemma.  
\begin{lem}\label{lemNC_1}
   For $n\ge 2$, 
  let $s, p, q, a, \gamma_1, \gamma_2, \gamma_3, \alpha, \mu$ and $\beta$ satisfy (\ref{eqNCA_1})-(\ref{eqNCA_3}). 
   If there exists a constant $C$ such that (\ref{eqNC}) holds for all  $u$ in $C^{\infty}_c(\mathbb{R}^n)$, then (\ref{eqNCA_5})-(\ref{eqNCA_7}) hold.
\end{lem}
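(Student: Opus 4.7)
My plan is to prove each of (\ref{eqNCA_5})--(\ref{eqNCA_7}) by substituting a suitable one- or two-parameter family of test functions $u \in C_c^\infty(\mathbb{R}^n)$ into (\ref{eqNC}) and extracting the stated condition from the requirement that the resulting inequality hold uniformly in the parameter(s). Throughout I fix a nontrivial base bump $v \in C_c^\infty(\mathbb{R}^n)$.

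For (\ref{eqNCA_5}) I would use the dilation $u_\lambda(x) = v(\lambda x)$: since both $|x|$ and $|x'|$ are $1$-homogeneous, each weighted norm reduces to a constant multiple of a pure power of $\lambda$, and uniform validity in $\lambda>0$ forces equality of the two sides' $\lambda$-exponents, which is exactly (\ref{eqNCA_5}). For (\ref{eqNCA_6_1}) I translate in the $x_n$-direction by setting $v_T(x) = v(x', x_n + T)$ with $T \to \infty$: on the support $|x'|$ stays bounded while $|x| \sim T$, so the $|x'|$-weights are absorbed into constants and matching the leading $T$-exponent gives $\gamma_1 \le a\gamma_2 + (1-a)\gamma_3$. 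For (\ref{eqNCA_6_2}) I instead translate by $(S,0,\ldots,0,T)$ with $S = T \to \infty$; now $|x|\sim|x'|\sim T$ on the support and matching the combined $T$-exponents yields (\ref{eqNCA_6_2}).

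For the more delicate (\ref{eqNCA_6_3}) I would use an anisotropic concentration toward the $x_n$-axis: choose $v \in C_c^\infty(\mathbb{R}^n)$ supported in $\{1/2 \le |y'| \le 3/2,\ 1 \le y_n \le 2\}$ (so $|y'|$ is bounded away from $0$) and set $u_\epsilon(x) = v(x'/\epsilon, x_n)$ with $\epsilon \to 0^+$. On its support $|x|$ remains comparable to $1$ while $|x'|\sim\epsilon$, and $\nabla u_\epsilon$ is dominated by $\epsilon^{-1}\nabla_{y'}v$. The change of variables $y' = x'/\epsilon$ shows that the three norms in (\ref{eqNC}) scale like $\epsilon^{\alpha+(n-1)/s}$, $\epsilon^{\mu-1+(n-1)/p}$, and $\epsilon^{\beta+(n-1)/q}$ (times constants independent of $\epsilon$), and uniform validity as $\epsilon\to 0^+$ forces precisely (\ref{eqNCA_6_3}).

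The main obstacle is (\ref{eqNCA_7}), for which I would use a many-bumps argument whose geometry depends on the case at hand. In each case I form $u = \sum_{j=1}^N v_j$ where the $v_j$ are mutually disjointly supported translates (or rotates) of a common bump, chosen so that the three weights $|x|^{\gamma}|x'|^{\alpha'}$ (with $\gamma\in\{\gamma_1,\gamma_2,\gamma_3\}$ and $\alpha'\in\{\alpha,\mu,\beta\}$) are comparable across the supports; then $\|u\|_s \sim N^{1/s}\|v_1\|_s$ and similarly for the $p$- and $q$-norms, so (\ref{eqNC}) reads $N^{1/s} \lesssim N^{a/p+(1-a)/q}$ and, in order to persist for all large $N$, demands $1/s \le a/p+(1-a)/q$. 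For $a\in\{0,1\}$ and for the case where the three scaling exponents appearing in (\ref{eqNCA_5}) all coincide, it suffices to place the $v_j$ in a ball far from the origin where the weights are quasi-constant. For the remaining case, namely equality in (\ref{eqNCA_6_3}), the $v_j$ must instead be packed inside a thin tube $|x'|\sim\epsilon$ around the $x_n$-axis, in the anisotropic regime of the previous paragraph; the technical hurdle is to choose the packing so that all three norms see clean $N^{1/\text{exponent}}$ scaling while the $\epsilon$-bookkeeping continues to balance, so that no residual $\epsilon$-factors spoil the $N$-dependence from which the border-case inequality is read off.
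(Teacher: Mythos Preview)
Your arguments for (\ref{eqNCA_5}), (\ref{eqNCA_6_1}), (\ref{eqNCA_6_2}), and (\ref{eqNCA_6_3}) are correct and essentially coincide with the paper's: dilation for the scaling identity, translations for the two $\gamma$-inequalities, and anisotropic concentration $x'\mapsto x'/\epsilon$ for (\ref{eqNCA_6_3}). The paper implements (\ref{eqNCA_6_3}) with an explicit piecewise-linear profile $f_1(|x'|)g(x_n)$ rather than a rescaled smooth bump, but the content is the same.

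The gap is in your treatment of (\ref{eqNCA_7}), specifically in the triple-equality and $a\in\{0,1\}$ cases. Placing $N$ disjoint translates in a fixed ball far from the origin cannot yield $N\to\infty$, and if you let the ball grow with $R$ while packing $N\sim R^n$ unit bumps where $|x|\sim|x'|\sim R$, a direct computation using (\ref{eqNCA_5}) shows that the ratio of the two sides of (\ref{eqNC}) scales like $R^{-a}$, hence stays bounded (or tends to $0$) regardless of the sign of $\frac{1}{s}-\frac{a}{p}-\frac{1-a}{q}$. So this test function family imposes no constraint. The mechanism that actually works is \emph{dyadic} placement exploiting the scale invariance encoded in the triple equality: the paper takes a single function $u(x)=f_2(|x|)g(|x_n|/|x'|)$ with $f_2(r)=r^{-\alpha-\gamma_1-n/s+\epsilon}$ near $0$, whose three norms behave like $\epsilon^{-1/s}$, $\epsilon^{-1/p}$, $\epsilon^{-1/q}$, forcing the inequality as $\epsilon\to0$. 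Equivalently, one could superpose bumps supported in dyadic annuli $\{|x|\sim 2^{-j}\}$; under the triple equality each contributes equally to each norm, and summing $m$ of them gives $m^{1/s}\lesssim m^{a/p+(1-a)/q}$.

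For the border case of equality in (\ref{eqNCA_6_3}), your ``thin tube'' picture is too vague and does not by itself resolve the hurdle you flag. The paper splits this case further: when additionally $\frac{1}{p}+\frac{\mu-1}{n-1}=\frac{1}{q}+\frac{\beta}{n-1}$ it again uses a single near-critical power $f_3(|x'|)g(x_n)$; otherwise it runs a genuine many-bumps argument, but with a \emph{double} dyadic scaling $u_j(r,\theta')=2^{(b_1\kappa+d_1)j}u(2^{\kappa j}r,2^{j}\theta')$ in spherical coordinates, where $\kappa$ is chosen so that all three norms are $j$-independent. The bumps live at $|x|\sim2^{-\kappa j}$ and angular scale $|\theta'|\sim2^{-j}$, i.e.\ they spiral toward the origin (or infinity) rather than sitting in a fixed tube. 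Your packing-in-a-tube scheme would need an analogous coupled two-parameter scaling to make the $\epsilon$- and $N$-bookkeeping close; as written it does not.
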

\begin{proof}
Let $C$ denote a positive constant depending only on $s, p, q, a, \gamma_1, \gamma_2, \gamma_3, \alpha, \mu$ and $\beta$  which may vary from line to line. 
 We prove (\ref{eqNCA_5})-(\ref{eqNCA_7}) one by one. 

\medskip

\noindent\emph{Proof of (\ref{eqNCA_5})}:
Fixing a $v\in C^{\infty}_c(\mathbb{R}^n)\setminus\{0\}$ and plugging $u(x):=v(\lambda x)$, $\lambda>0$, into (\ref{eqNC}), we have
 \[
     \lambda^{-nA_1} \||x|^{\gamma_1}|x'|^{\alpha}v\|_{L^s(\mathbb{R}^n)}\le C\lambda^{-nA_2}\||x|^{\gamma_2}|x'|^{\mu}\nabla v\|_{L^p(\mathbb{R}^n)}^{a}\||x|^{\gamma_3}|x'|^{\beta}v\|_{L^q(\mathbb{R}^n)}^{1-a}, 
   \]
   where $A_1$ and $A_2$ are the left and right hand side of  (\ref{eqNCA_5}) respectively. 
   Sending $\lambda$ to $0$ and $\infty$ in the above, we obtain (\ref{eqNCA_5}).

   \medskip
   
   \noindent\emph{Proof of (\ref{eqNCA_6_1}) and (\ref{eqNCA_6_2})}:   
  Fixing a $v\in C_c^{\infty}(B_1(0))\setminus\{0\}$,    
   we consider $u(x):=v(x-x_0)$ where $x_0=(S, 0,...,0, R)$ and $S,R>0$. Then $u\in C^{\infty}_c(B_1(x_0))$, and $u$ satisfies (\ref{eqNC}). 
   Choose $S=2$ and $R$ large. 
    For $x\in B_1(x_0)$, we have $2\le |x'|\le 3$ and $R/2\le |x|\le 2R$. 
    Plugging $u$ into (\ref{eqNC}),  we have, 
   \[
      R^{\gamma_1}\le CR^{a\gamma_2+(1-a)\gamma_3} 
   \]
   for some constant $C$ independent of $R$. Inequality (\ref{eqNCA_6_1})  follows, since $R$ can be arbitrarily large.

   Now we choose large $S$ and $R=0$.   
   For  $x\in B_1(x_0)$, both $|x'|$ and $|x|$ are in $[S/2, S]$. 
    Plugging $u$ into (\ref{eqNC}), we have 
   \[
      S^{\gamma_1+\alpha}\le CS^{a(\gamma_2+\mu)+(1-a)(\gamma_3+\beta)}.
   \]
Inequality (\ref{eqNCA_6_2})  follows from the above, since $S$ can be arbitrarily large.

\bigskip

Next, to prove (\ref{eqNCA_6_3}) and (\ref{eqNCA_7}), we fix a $g\in C_c^{\infty}(1, 4)$ satisfying 
\begin{equation*}
      g(t)=\left\{
         \begin{split}
            & 0, \quad t\le 1 \textrm{ or }t\ge 4, \\
            &  1, \quad 2\le t\le 3.
         \end{split}
      \right.
   \end{equation*} 
   \noindent\emph{Proof of (\ref{eqNCA_6_3})}: 
    For $0<\epsilon<1$, let 
   
    \[
      f_1(\rho)=\left\{
         \begin{array}{ll}
             0, & \rho \ge 2\epsilon, \\
             2\epsilon-\rho, & \epsilon\le \rho\le 2\epsilon, \\
                           \epsilon, & \rho\le \epsilon.  
                                    \end{array}
      \right.
   \]
     Then $u(x): =f_1(|x'|)g(x_n)$ satisfies (\ref{eqNC}). 
     We have $\mathrm{supp}$ $u\subset \{|x'|\le 2\epsilon, 1\le x_n\le 4\}$. For any $x$ in $\mathrm{supp}$ $u$, $1\le |x|\le 5$. Then (\ref{eqNC}) for this $u$ is equivalent to 
   \begin{equation*}
      \||x'|^{\alpha}u\|_{L^s(\mathbb{R}^n)}\le C\||x'|^{\mu}\nabla u\|_{L^p(\mathbb{R}^n)}^{a}\||x'|^{\beta}u\|_{L^q(\mathbb{R}^n)}^{1-a} 
   \end{equation*}
   for some constant $C$ independent of $\epsilon$. 
   By calculation,   
   \begin{equation*}
         \int_{\mathbb{R}^n}||x'|^{\alpha}u|^s dx \ge \frac{1}{C}\int_{5\epsilon/4\le |x'|\le 7\epsilon/4}||x'|^{\alpha}f_1(|x'|)|^sdx'\ge \frac{1}{C}\epsilon^{(\alpha+1)s+n-1},
   \end{equation*}

  \begin{equation*}
      \int_{\mathbb{R}^{n}}||x'|^{\mu}\nabla u|^pdx   \le C\int_{|x'|\le 2\epsilon}|x'|^{p\mu}(| f'_1(|x'|)|^p+|f_1(|x'|)|^p)dx'
                                                                               \le C\epsilon^{\mu p+n-1}, 
                                                                                  \end{equation*}
 \begin{equation*}
      \int_{\mathbb{R}^{n}}||x'|^{\beta} u|^qdx  \le C\int_{|x'|\le 2\epsilon}||x'|^{\beta}f_1(|x'|)|^qdx' 
                                                                                 \le C\epsilon^{(\beta+1) q+n-1}.    \end{equation*}
   Thus we have 
   \[
      \epsilon^{\alpha+1+(n-1)/s}\le C\epsilon^{a(\mu +(n-1)/p)+(1-a)(\beta +1+(n-1)/q)}. 
   \]
   Inequality (\ref{eqNCA_6_3}) follows, since $\epsilon$ can be arbitrarily small. 
  
  \bigskip
  
   \noindent\emph{Proof of (\ref{eqNCA_7})}:
  We divide the proof into two cases. 
  
   \medskip
    
 \noindent \textbf{Case 1.} $a=0$ or $a=1$ or $1/p+(\gamma_2+\mu-1)/n=1/q+(\gamma_3+\beta)/n=1/s+(\gamma_1+\alpha)/n$.
 
 \medskip
  
  We first prove the inequality in (\ref{eqNCA_7}) when $1/p+(\gamma_2+\mu-1)/n=1/q+(\gamma_3+\beta)/n=1/s+(\gamma_1+\alpha)/n$. 
  For $0<\epsilon<1$,  let 
   \begin{equation}\label{eqNC_1_f2}
     \displaystyle  f_2(r)=\left\{
         \begin{array}{ll}
             \displaystyle  r^{-\alpha-\gamma_1-n/s+\epsilon}, & 0< r\le  1, \\
            \displaystyle  1, & 1\le r\le 2, \\
            \displaystyle  \frac{4-r}{2}, & 2\le r\le 4,\\
            \displaystyle  0, & r\ge 4. 
         \end{array}
      \right.
   \end{equation}
    Then $u(x):=f_2(|x|)g(|x_n|/|x'|)$ satisfies (\ref{eqNC}) by the approximation of 
   $u_{\delta}(x):=f_2(\sqrt{|x|^2+\delta^2})g(|x_n|/|x'|)$. 
  By computation, 
   \begin{equation}\label{eqNC_1_5}
         \int_{\mathbb{R}^n}||x|^{\gamma_1}|x'|^{\alpha}u|^s dx  
         \ge  \frac{1}{C}\int_{0<|x|\le 1, 2\le |x_n|/|x'|\le 3}||x|^{\gamma_1+\alpha}f_2|^sdx
                   \ge \frac{1}{C}\int_{0}^{1}r^{\epsilon s-1}dr
          \ge \frac{1}{C}\epsilon^{-1}. 
    \end{equation}
Notice that for $1\le |x_n|/|x'|\le 4$, $|\nabla g(|x_n|/|x'|)|\le C|g'(|x_n|/|x'|)|/|x'|\le C|x|^{-1}$, we have, using the fact that $1/s+(\gamma_1+\alpha)/n=1/p+(\gamma_2+\mu-1)/n>0$, that 
  \begin{equation}\label{eqNC_1_6}
     \begin{split}
      \int_{\mathbb{R}^{n}}||x|^{\gamma_2}|x'|^{\mu}\nabla u|^pdx  & \le C\int_{|x|\le 1, 1\le |x_n|/|x'|\le 4}|x|^{p(\gamma_2+\mu)}\big(|\nabla f_2|^p+|x|^{-p}|f_2|^p\big)dx\\
                                                                              & \le C \int_{0}^{4} r^{(\gamma_2+\mu)p+n-1+(-\alpha-\gamma_1-n/s-1+\epsilon)p}dr\\                                                                               &= C\int_{0}^{4}r^{\epsilon p-1}dr
                                                                               \le C\epsilon^{-1}. 
                                                                                     \end{split}
   \end{equation}
Similarly, using the fact $1/s+(\gamma_1+\alpha)/n=1/q+(\gamma_3+\beta)/n>0$, we have  
 \begin{equation}\label{eqNC_1_7}
      \int_{\mathbb{R}^{n}}||x|^{\gamma_3}|x'|^{\beta} u|^qdx  \le C\int_{|x|\le 1, 1\le |x_n|/|x'|\le 4}||x|^{\gamma_3+\beta}f_2(x)|^qdx   \le C\epsilon^{-1}. 
         \end{equation}
   By (\ref{eqNC}), (\ref{eqNC_1_5}), (\ref{eqNC_1_6}) and (\ref{eqNC_1_7}), we have 
   \[
   \epsilon^{-1/s}\le C\epsilon^{-a/p-(1-a)/q} 
   \]
   for arbitrarily small $\epsilon$.  So the inequality in (\ref{eqNCA_7}) holds. 
   
  Now we turn to $a=0$ or $a=1$.  In view of (\ref{eqNCA_5}),   when $a=0$,  we have $1/s+(\gamma_1+\alpha)/n=1/q+(\gamma_3+\beta)/n$, and when $a=1$, we have  $1/s+(\gamma_1+\alpha)/n=1/p+(\gamma_2+\mu-1)/n$. The inequality in (\ref{eqNCA_7}) follows from the same proof as above. 
  
    \medskip
    
\noindent\textbf{Case 2.} $0<a<1$, $1/p+(\gamma_2+\mu-1)/n\ne  1/q+(\gamma_3+\beta)/n$,  and 
   \begin{equation}\label{eqNC_1_11}
      \frac{1}{s}+\frac{\alpha}{n-1}=a\Big(\frac{1}{p}+\frac{\mu-1}{n-1}\Big)+(1-a)\Big(\frac{1}{q}+\frac{\beta}{n-1}\Big).
   \end{equation}
    If (\ref{eqNC_1_11}) holds, then either Case 1 or Case 2 holds. 
  
   \medskip
   
  We divide the proof of Case 2 into two subcases. 
   \medskip
  
  \noindent \emph{Subcase 2.1.} $1/p+(\mu-1)/(n-1)=1/q+\beta/(n-1)$. 
  
   \medskip
  
  In this subcase, we have, in view of (\ref{eqNC_1_11}), that $1/s+\alpha/(n-1)=1/p+(\mu-1)/(n-1)=1/q+\beta/(n-1)$.  
  For $0<\epsilon<1$, let 
  \[
      f_3(\rho)=\left\{
         \begin{array}{ll}
            \displaystyle \rho^{-\alpha-(n-1)/s+\epsilon}, & 0< \rho\le  1, \\
                        \displaystyle 1, & 1\le \rho\le 2, \\
             \displaystyle \frac{4-\rho}{2}, & 2\le \rho\le 4,\\
            \displaystyle 0, & \rho\ge 4. 
         \end{array}
      \right.
   \]
Let $u(x):=f_3(|x'|)g(x_n)$. Then it satisfies (\ref{eqNC}) by the approximation of 
   $u_{\delta}(x): =f_3(\sqrt{|x'|^2+\delta^2})g(x_n)$. 
     By computation, we have
   \begin{equation}\label{eqNC_1_8}
         \int_{\mathbb{R}^n}||x|^{\gamma_1}|x'|^{\alpha}u|^s dx   \ge \frac{1}{C}\int_{0\le |x'|\le 1}||x'|^{\alpha}f_3(|x'|)|^sdx'
                                                                                                 \ge \frac{1}{C}\int_{0}^{1}\rho^{-1+\epsilon s}d\rho 
                                                                                                 \ge \frac{1}{C}\epsilon^{-1}.    \end{equation}
Since  $1/s+\alpha/(n-1)=1/p+(\mu-1)/(n-1)>0$, we have
  \begin{equation}\label{eqNC_1_9}
     \begin{split}
      \int_{\mathbb{R}^{n}}||x|^{\gamma_2}|x'|^{\mu}\nabla u|^pdx  & \le C\int_{|x'|\le 1}|x'|^{\mu p}(|\nabla f|^p+|f|^p)dx'\\ 
                                                                                    & \le C\int_{0}^{4}\rho^{\mu p+(\alpha-(n-1)/s-1+\epsilon)p+n-2}d\rho\\ 
                                                                                                                                                                  & = C\int_{0}^{4}\rho^{\epsilon p-1}d\rho 
                                                                               \le C\epsilon^{-1}. 
                                                                                     \end{split}
   \end{equation}
Similarly,  since $1/s+\alpha/(n-1)=1/q+\beta/(n-1)>0$, we have  
 \begin{equation}\label{eqNC_1_10}
      \int_{\mathbb{R}^{n}}||x|^{\gamma_3}|x'|^{\beta} u|^qdx  \le C\int_{|x'|\le 1}||x'|^{\beta}f_3(x')|^qdx' \le C\epsilon^{-1}.
         \end{equation}
   So by (\ref{eqNC}), (\ref{eqNC_1_8}), (\ref{eqNC_1_9}) and (\ref{eqNC_1_10}), we have 
   \[
     \epsilon^{-1/s}\le C\epsilon^{-a/p-(1-a)/q} 
   \]
   for arbitrarily small $\epsilon$. So the inequality in (\ref{eqNCA_7}) follows in this subcase.    
    \medskip
   
  \noindent \emph{Subcase 2.2.}  $1/p+(\mu-1)/(n-1)\ne 1/q+\beta/(n-1)$. 
  
   \medskip

    Introduce the spherical coordinates in $\mathbb{R}^n_+$: $r=|x|$,  $\theta=x/|x|$. 
   Let $\theta'=x'/|x|$, we have $\theta=(\theta', \sqrt{1-|\theta'|^2})$. For simplicity, we denote $x=(r, \theta')$. 
  Fix $\delta>0$ small, and  let $R_0:=\{x\in \mathbb{R}^n_+ \mid 1<r<2, \delta<|\theta'|<2\delta\}$. 
   Fix a function $u\in C_c^{\infty}(R_0)\setminus\{0\}$, and let 
\[
   u_j(r, \theta')=
   2^{(b_1\kappa+d_1)j}u(2^{\kappa j}r, 2^{j}\theta'), \quad j\ge 1, 
\]
%
%
%
where $b_1=n/s+\gamma_1+\alpha$, $d_1=(n-1)/s+\alpha$, and $\kappa$ is some $j$-independent constant to be determined later. 
Then $u_j\in C_c^{\infty}(R_j)$, where 
\[
   R_j:=\{x\in \mathbb{R}^n \mid 2^{-\kappa j}<r<2^{-\kappa j+1}, \ \ 2^{-j}\delta<|\theta'|<2^{-j+1}\delta\}.
\] 
  Denote 
\[
  I_0:=\||x|^{\gamma_1}|x'|^{\alpha}u\|_{L^s(R_0)}, \quad A_0:=\||x|^{\gamma_2}|x'|^{\mu}\nabla u\|_{L^p(R_0)}, \quad B_0:=\||x|^{\gamma_3}|x'|^{\beta}u\|_{L^q(R_0)}. 
\]
In the following, the notation $A\simeq B$ means $B/C\le A\le CB$, and $A\lesssim B$ means $A\le CB$, for some $C>1$ depending only on $s, p, q, a, \gamma_1, \gamma_2, \gamma_3, \alpha, \mu$ and  $\beta$.  

For any $\kappa\in\mathbb{R}$, we have 
\begin{equation}\label{eqNC_1_13}
 \||x|^{\gamma_1}|x'|^{\alpha}u_j\|_{L^s(R_j)}
    \simeq I_0. 
\end{equation}
Another computation gives 
\begin{equation}\label{eqNC_1_14'}
 \||x|^{\gamma_2}|x'|^{\mu}\nabla u_j\|_{L^p(R_j)}
   \lesssim 
   2^{(b_1\kappa+d_1-b_2\kappa-d_2)j}A_0,
   \end{equation}
   where $b_2=n/p+\gamma_2+\mu-1$ and $d_2=(n-1)/p+\mu-1$. Since we are in the case when $1/p+(\mu-1)/(n-1)\ne 1/q+\beta/(n-1)$ and $1/p+(\gamma_2+\mu-1)/n\ne 1/q+(\gamma_3+\beta)/n$,  we have, using (\ref{eqNCA_5}) and (\ref{eqNC_1_11}), that $b_1\ne b_2$ and $d_1\ne d_2$. Now we fix  
   \[
      \kappa:=\frac{d_2-d_1}{b_1-b_2}\in \mathbb{R}\setminus\{0\}, 
   \]
  so that 
   \begin{equation}\label{eqNC_1_14}
       \||x|^{\gamma_2}|x'|^{\mu}\nabla u_j\|_{L^p(R_j)}\lesssim A_0. 
   \end{equation}
   Using (\ref{eqNCA_5}), (\ref{eqNC_1_11}), and the definition of $\kappa$, we have 
    \begin{equation}\label{eqNC_1_15}
 \||x|^{\gamma_3}|x'|^{\beta}u_j\|_{L^q(R_j)}\simeq B_0. 
 \end{equation}
  For any positive integer $m$,  $w:=\sum_{j=1}^{m}u_j \in C_c^{\infty}(\mathbb{R}^n)$. Since $(\supp u_j)\cap (\supp u_i)=\phi$ for $i\ne j$,    we have,  by (\ref{eqNC_1_13}), (\ref{eqNC_1_14}) and (\ref{eqNC_1_15}), that
  \[
      \||x|^{\gamma_1}|x'|^{\alpha}w\|^s_{L^s(\mathbb{R}^n)}\simeq mI_0^s, \quad \||x|^{\gamma_2}|x'|^{\mu}\nabla w\|^p_{L^p(\mathbb{R}^n)}\lesssim mA^p_0, 
      \quad \||x|^{\gamma_3}|x'|^{\beta}w\|^q_{L^q(\mathbb{R}^n)}\simeq mB_0^q. 
  \]
  If $w$ satisfies (\ref{eqNC}), then we have, by the above, that 
 \[
    m^{1/s}\le C\frac{A_0^{a}B_0^{1-a}}{I_0}m^{a/p+(1-a)/q}. 
 \]
 Since $I_0, A_0, B_0>0$  and $m$ can be arbitrarily large, we have $1/s\le a/p+(1-a)/q$.  (\ref{eqNCA_7}) is proved. 
 \end{proof}


\noindent\emph{Proof of the necessity part of Theorem \ref{thmD_2}}:  Let $n\ge 1$, $s, p, q, a, \gamma_1, \gamma_2$ and $\gamma_3$ satisfy (\ref{eqNCA_1}) and (\ref{eqNCB_2}). We show that if (\ref{eqD_2_2}) holds for all $u\in C_c^\infty(\mathbb{R})$, then (\ref{eqNCB_3})-(\ref{eqNCB_5}) hold. This is the same as 
 in \cite{CKN} when $q\ge 1$, while the proof there applies to $q>0$ as well.

Since the formulation of our conditions is somewhat different from that in \cite{CKN}, we present 
a proof of the necessity of (\ref{eqNCB_3})-(\ref{eqNCB_5}) using similar arguments as in the proof of Lemma \ref{lemNC_1}. 
Condition (\ref{eqNCB_3}) follows from a dimensional analysis argument as in the proof of (\ref{eqNCA_5}) 
with $\alpha=\mu=\beta=0$.
Set $\alpha=\mu=\beta=0$ and $x_0=(0, ..., R)$ in the proof of (\ref{eqNCA_6_1}), the same arguments give (\ref{eqNCB_4}).

 To prove (\ref{eqNCB_5}), let $u=f_2(|x|)$ where $f_2$ is given by (\ref{eqNC_1_f2}) with $\alpha=0$, and insert $u$ into (\ref{eqD_2_2}). 
When $0<a<1$, we have $1/s+\gamma_1/n=1/p+(\gamma_2-1)/n=1/q+\gamma_3/n$. Similar to (\ref{eqNC_1_5})-(\ref{eqNC_1_7}), we have 
\[
   \||x|^{\gamma_1}u\|_{L^s(\mathbb{R})}\ge C\epsilon^{-1/s}, \quad \||x|^{\gamma_2}u'\|_{L^p(\mathbb{R})}\le C\epsilon^{-1/p}, \quad \||x|^{\gamma_3}u\|_{L^q(\mathbb{R})}\le C\epsilon^{-1/q}. 
\]
Using (\ref{eqD_2_2}) and the above, we have $\epsilon^{-1/s}\le C\epsilon^{-a/p-(1-a)/q}$ for arbitrarily small $\epsilon$, 
thus the  inequality in (\ref{eqNCB_5}) follows. 
In view of (\ref{eqNCB_3}),     we have $1/s+\gamma_1/n=1/q+\gamma_3/n$ when $a=0$, and   $1/s+\gamma_1/n=1/p+(\gamma_2-1)/n$ when $a=1$. The inequality in (\ref{eqNCB_5}) when $a=0$ or $a=1$ follows from the same proof for $0<a<1$. 
\qed


  \section{A nonlinear Poincar\'{e} inequality}\label{sec_3}
In this section, we give the proof of Theorem \ref{thm1-new}.

\bigskip

\noindent\emph{Proof of Theorem \ref{thm1-new}}: We divide the proof into three steps.

\bigskip

\noindent\textbf{Step 1.} We prove (\ref{est1-new}) under the hypotheses of the theorem. 

\medskip

For $\lambda=1$, Theorem \ref{thm1-new} is a generalized Poincare inequality (see e.g. Lemma 1.1.11 of \cite{Mazja}).  In the rest of Step 1 we assume $\lambda\ne 1$.

Since $C^1(\bar{\Omega})$ is dense in $W^{1, p}(\Omega)$, we may assume without loss of generality that $w\in C^1(\bar{\Omega})$ and $w>0$ in $\bar{\Omega}$. Let $u:=w^{1/\lambda}$, 
then inequality (\ref{est1-new}) takes an equivalent formulation: 
for  all $u\in C^1(\bar{\Omega})$ and $u>0$ in $\bar{\Omega}$, 
\begin{equation*}
\|v\|_{ L^p(\Omega) }\le C \|\nabla v\|_{ L^p(\Omega) }, \ \  \mbox{where}\ v: = u^\lambda- (\bar u)^\lambda \ \ \mbox{and}\  \bar u:= \avint_{S}u. 
\end{equation*}

We prove (\ref{est1-new}) by contradiction argument. 
Suppose the contrary, 
there exists a sequence of
positive functions  $\{u_j\}\in C^1(\overline \Omega)$ such that 
\begin{equation}\label{est2-new}
v_j:= (u_j)^\lambda- (\bar u_j)^\lambda
\end{equation}
satisfies
\begin{equation}\label{est2-new-1}
1= \|v_j\|_{ L^p(\Omega) } > j\|\nabla v_j\|_{ L^p(\Omega) },
\end{equation}
 where 
$
\bar u_j :=  \avint_{S}u_j>0.  
$

By (\ref{est2-new-1}) and the compact embedding of $W^{1,p}(\Omega)$ to $L^p(\Omega)$, there exists some $v\in W^{1, p}(\Omega)$ such that, after passing to  a subsequence (still denoted by $\{v_j\}$),  $v_j\rightharpoonup v$  in  $W^{1, p}(\Omega)$, $v_j\to v$ in $L^p(\Omega)$ and q.e.  
  in
$\Omega$, $\|\nabla v\|_{L^p(\Omega)}=0$, and $\|v\|_{L^p(\Omega)}=1$. 
 Now we have, using (\ref{est2-new-1}), that
\begin{equation}\label{eq6_2_1}
  \|v_j-v\|_{W^{1, p}(\Omega)}\to 0.
\end{equation}

We divide into two cases, $\lambda>1$ and $0<\lambda<1$. 

\bigskip

\noindent\emph{Case 1.} $\lambda>1$.

\medskip

In this case the function $s\to s^\lambda$ is convex, and therefore 
\begin{equation*}
\bar v_j\ge \big(\avint_Su_j\big)^{\lambda} 
-(\bar u_j)^\lambda=0.
\label{est5a}
\end{equation*}
Thus, by (\ref{eq6_2_1}),  we have  
$\bar{v}_j=\avint_{S}v_j\to v>0$.

Passing to another subsequence if necessary, we either have $\bar u_j\to \alpha \in [0, \infty)$ or $\bar u_j\to \infty$.

If $\bar u_j\to \alpha \in [0, \infty)$,  we have 
\[
u_j\to (v+\alpha^\lambda)^{1/\lambda}\ \  \mbox{a.e. in}\ \Omega.
\]
By Fatou's lemma, 
\[
|S| (v+\alpha^\lambda)^{1/\lambda}=
 \int_{ S }\liminf_{j\to \infty} u_j
\le \liminf_{j\to \infty}\int_{S } u_j=  \alpha |S|.
\]
A contradiction, since  $v>0$ and $\alpha \ge  0$. So inequality (\ref{est1-new}) holds.

In the rest of Case 1, we assume $\bar u_j\to \infty$.

Denote  $a_j: = \bar u_j \to \infty$,  and write 
\begin{equation}\label{est3-new}
0\le  u_j = a_j+\eta_j.
\end{equation}
Then
\begin{equation}\label{est4-new}
\int_{  S }\eta_j=0,\quad \forall\ j,
\end{equation}
and, by (\ref{est2-new}), 
\begin{equation*}
v_j= (a_j+\eta_j)^\lambda - (a_j)^\lambda. 
\end{equation*}
We will show that this leads to a contradiction.

\medskip

Write
$
v_j^+(\theta) = \max\{ v_j(\theta), 0\}$, 
$v_j^-(\theta)=\max\{ -v_j(\theta), 0\}$,  
 $\theta\in \overline \Omega$.
Then $v_j=v_j^+-v_j^-$.
By (\ref{eq6_2_1}) and the positivity of $v$, we have
\begin{equation}\label{lem1-new}
  \|v_j^{+}-v\|_{L^1(\Omega)}\to 0, \quad \|v_j^{-}\|_{L^1(\Omega)}\to 0. 
\end{equation}

\begin{lem} 
\begin{equation*}
(a_j)^{\lambda-1}  \int_{ \Omega } \eta_j^{-}\to 0,  \quad \textrm{and}\quad 
(a_j)^{\lambda-1}  \int_{ S } |\eta_j|\to 0.
\label{est6}
\end{equation*}
\label{lem1-3-new}
\end{lem}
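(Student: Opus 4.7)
The plan is to exploit the convexity of $t\mapsto t^{\lambda}$ on $[0,\infty)$ (since $\lambda>1$) to control $\eta_j^-$ pointwise by $v_j^-$, and then invoke the $L^1$ convergence of $v_j^-$ supplied by (\ref{lem1-new}). The key is the pointwise inequality
\[
 v_j^{-}(\theta)\ \ge\ c\, a_j^{\lambda-1}\eta_j^{-}(\theta)\quad \text{in }\Omega,
\]
where $c=c(\lambda)>0$. Note $v_j^{-}$ is supported where $u_j\le a_j$, i.e.\ where $\eta_j\le 0$, so it suffices to work in the range $0\le u_j\le a_j$.

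To prove the pointwise bound I would split into two regimes. If $0\le u_j\le a_j/2$, then $\eta_j^{-}=a_j-u_j\le a_j$ and
\[
 v_j^{-}=a_j^{\lambda}-u_j^{\lambda}\ \ge\ a_j^{\lambda}(1-2^{-\lambda})\ \ge\ (1-2^{-\lambda})\,a_j^{\lambda-1}\eta_j^{-}.
\]
If $a_j/2\le u_j\le a_j$, convexity of $t^{\lambda}$ gives the tangent-line bound $a_j^{\lambda}-u_j^{\lambda}\ge \lambda u_j^{\lambda-1}(a_j-u_j)$, whence
\[
 v_j^{-}\ \ge\ \lambda\,(a_j/2)^{\lambda-1}\eta_j^{-}\ =\ \lambda\,2^{1-\lambda}\,a_j^{\lambda-1}\eta_j^{-}.
\]
Taking $c:=\min\{1-2^{-\lambda},\lambda 2^{1-\lambda}\}>0$ settles the claim.

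Integrating the pointwise inequality over $\Omega$ and using $\|v_j^{-}\|_{L^1(\Omega)}\to 0$ from (\ref{lem1-new}) yields
\[
 a_j^{\lambda-1}\int_{\Omega}\eta_j^{-}\ \le\ c^{-1}\|v_j^{-}\|_{L^1(\Omega)}\to 0,
\]
which is the first assertion. For the second, the constraint $\int_S\eta_j=0$ forces $\int_S\eta_j^{+}=\int_S\eta_j^{-}$, hence $\int_S|\eta_j|=2\int_S\eta_j^{-}\le 2\int_{\Omega}\eta_j^{-}$, and multiplying by $a_j^{\lambda-1}$ gives $a_j^{\lambda-1}\int_S|\eta_j|\to 0$.

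The only mildly subtle point is the lower bound $v_j^{-}\ge c\,a_j^{\lambda-1}\eta_j^{-}$: a naive application of the tangent-line estimate at $u=a_j$ would instead yield the \emph{upper} bound $v_j^{-}\le \lambda a_j^{\lambda-1}\eta_j^{-}$, which goes in the wrong direction. The dichotomy above is what makes the argument work, and this is where the assumption $\lambda>1$ enters decisively.
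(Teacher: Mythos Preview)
Your proof is correct and follows the same strategy as the paper: establish a pointwise lower bound $v_j^- \ge c\,a_j^{\lambda-1}\eta_j^-$ on $\Omega$, integrate using $\|v_j^-\|_{L^1(\Omega)}\to 0$, and then invoke $\int_S\eta_j=0$ for the second claim. The only difference is that the paper obtains the sharper constant $c=1$ in one line via the concavity inequality $1-(1-t)^\lambda\ge t$ on $[0,1]$ (i.e.\ $a_j^\lambda-(a_j-\eta_j^-)^\lambda\ge a_j^{\lambda-1}\eta_j^-$ directly), avoiding your dichotomy.
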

\begin{proof}
Write
\[
v_j^-= (a_j)^\lambda- (a_j-\eta_j^-)^\lambda
= (a_j)^\lambda \big( 1-(1- \frac {\eta_j^-}{a_j})^\lambda \big),
\]
and recall from (\ref{est3-new}) that $0\le w^-\le a_j$. Since $\lambda\ge 1$, we have the following elementary inequality:
\[
g(t):= 1- (1-t)^\lambda - t\ge 0,\quad\forall\ 0\le t\le 1.
\]
Indeed, the above inequality holds due to the concavity of $g$ in
$[0, 1]$ ($g''(t)=-(\lambda-1) (1-t)^{ \lambda-2}<0$
for all $0<t<1$) and the fact that $g(0)=g(1)=0$.

Now we have, using (\ref{lem1-new}) and the above, that
\[
\circ(1)= \int_{ \Omega }  v_j^-= (a_j)^\lambda \Big( 1-\big(1- \frac {\eta_j^-}{a_j}\big)^\lambda \Big)
\ge  (a_j)^{\lambda}  \int_{  \Omega  } 
\frac { \eta_j^-} {a_j}=
 (a_j)^{\lambda-1} 
 \int_{ \Omega } \eta_j^-.
\]
Lemma \ref{lem1-3-new} follows from the above and (\ref{est4-new}).
\end{proof}

\begin{lem} There exists some positive constant $C$ independent of $j$
 such that
 $$
\int_{ S } (\eta_j^+)^\lambda  \ge \frac 1C, \quad \forall\ j.
$$
\label{lem1-4-new}
\end{lem}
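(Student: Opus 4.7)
The plan is to bound $v_j^+$ pointwise on $S$ from above in terms of $(\eta_j^+)^\lambda$ plus a quantity that is controlled by Lemma \ref{lem1-3-new}, and then to observe that $\int_S v_j^+$ is bounded below away from zero using the already established strong convergence $v_j \to v$ in $L^p(\Omega)$ (where $v>0$ is the constant satisfying $\|v\|_{L^p(\Omega)}=1$).

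The key pointwise inequality I would establish is: for every $a, t \ge 0$ and $\lambda > 1$, there is a constant $C_\lambda$ with
\[
(a+t)^\lambda - a^\lambda \le C_\lambda\bigl(a^{\lambda-1} t + t^\lambda\bigr).
\]
This is immediate from case analysis: when $t \le a$, the mean value theorem gives $(a+t)^\lambda - a^\lambda \le \lambda(a+t)^{\lambda-1}t \le \lambda 2^{\lambda-1} a^{\lambda-1}t$; when $t>a$, $(a+t)^\lambda \le (2t)^\lambda$ so the difference is at most $2^\lambda t^\lambda$. Applied with $a=a_j$ and $t=\eta_j^+$, this yields, on the set $\{\eta_j\ge 0\}$,
\[
v_j^+ = (a_j+\eta_j^+)^\lambda - a_j^\lambda \le C\bigl(a_j^{\lambda-1}\eta_j^+ + (\eta_j^+)^\lambda\bigr),
\]
and $v_j^+=0$ where $\eta_j<0$, so the same bound holds everywhere.

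Next I would integrate this inequality over $S$. Since $\eta_j^+ \le |\eta_j|$, Lemma \ref{lem1-3-new} forces $a_j^{\lambda-1}\int_S\eta_j^+ \to 0$, leaving
\[
\int_S v_j^+ \le \circ(1) + C\int_S(\eta_j^+)^\lambda.
\]
On the other hand, I would lower-bound $\int_S v_j^+$ as follows: strong convergence $v_j\to v$ in $L^p(\Omega)$ (hence in $L^1(\Omega)$ since $|\Omega|<\infty$) together with $v$ being a positive constant of $L^p$-norm $1$ implies $\int_S v_j \to v|S| >0$, and (\ref{lem1-new}) gives $\int_S v_j^- \le \|v_j^-\|_{L^1(\Omega)} \to 0$, so $\int_S v_j^+ \to v|S|>0$. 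Combining the two bounds produces $\int_S(\eta_j^+)^\lambda \ge 1/C$ for all $j$ large, and adjusting $C$ handles the finitely many remaining indices.

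The only subtle point, and the main obstacle, is securing the pointwise split $v_j^+ \le C(a_j^{\lambda-1}\eta_j^+ + (\eta_j^+)^\lambda)$: one must get the right combination so that the linear-in-$\eta_j^+$ term is precisely the one Lemma \ref{lem1-3-new} kills (after multiplication by $a_j^{\lambda-1}$), while the $(\eta_j^+)^\lambda$ term is exactly the quantity to be bounded below. Once that inequality is in hand, the rest of the argument is a straightforward use of the already-extracted limits.
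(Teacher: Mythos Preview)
Your proposal is correct and follows essentially the same approach as the paper: the paper uses the elementary inequality $(1+t)^\lambda - 1 \le C(t^\lambda + t)$ for $t\ge 0$, which after rescaling is exactly your bound $(a+t)^\lambda - a^\lambda \le C(a^{\lambda-1}t + t^\lambda)$, and then combines it with (\ref{lem1-new}) and Lemma~\ref{lem1-3-new} just as you do. The only cosmetic difference is that the paper invokes (\ref{lem1-new}) directly for $\int_S v_j^+ \to v|S|$, whereas you route through $L^p\to L^1$ convergence and $\int_S v_j^-\to 0$ separately.
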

\begin{proof}
We will use the following elementary inequality:
for $\lambda\ge 1$, there exists some positive constant $C$, depending 
only on $\lambda$,  such that
\[
(1+t)^\lambda-1 \le C(t^\lambda+t),\quad\forall \ t\ge 0.
\]
With this constant $C$, we have, using (\ref{lem1-new}), that 
\[
  \begin{split}
v|S|+\circ(1) & =  \int_{S } v_j^+
= (a_j)^\lambda  \int_{ S  }
\Big( \big(1+ \frac {\eta_j^+}{a_j}\big)^\lambda -1\Big)
\\
 & \le C  (a_j)^\lambda    \int_{  S  }
\Big(  \big(  \frac {\eta_j^+}{a_j}\big)^\lambda + 
  \frac {\eta_j^+}{a_j}\Big)
=  C \int_{ S  } 
\Big( (\eta_j^+)^\lambda +(a_j)^{\lambda-1} \eta_j^+\Big).
\end{split}
\]
Lemma \ref{lem1-4-new} follows from the above in view of 
Lemma \ref{lem1-3-new}.
\end{proof}

\begin{lem}
For every $\epsilon>0$,
$(a_j)^{\lambda-1}|\{\eta_j>\epsilon\}|\to 0$.
\label{lem1-5-new}
\end{lem}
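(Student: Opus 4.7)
The plan is to exploit the strong $L^p$ convergence $v_j^+\to v$ (so $v\in L^1(\Omega)$ and $v$ is a.e.\ finite) together with the fact that on $E_j:=\{\eta_j>\epsilon\}$ the function $v_j$ is forced to be pointwise large. Concretely, since $u_j=a_j+\eta_j$ and $\lambda\ge 1$, the elementary inequality $(1+t)^\lambda\ge 1+\lambda t$ for $t\ge 0$ (applied with $t=\eta_j/a_j\ge \epsilon/a_j$ on $E_j$) yields
\[
v_j \;=\; (a_j+\eta_j)^\lambda-a_j^\lambda \;\ge\; \lambda\epsilon\,a_j^{\lambda-1} \quad\text{on } E_j.
\]

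First I would show that $|E_j|\to 0$. Given $\delta>0$, since $v$ is a.e.\ finite, choose $M$ so large that $|\{|v|>M\}|<\delta/2$. For $j$ large one has $\lambda\epsilon\,a_j^{\lambda-1}>2M$, and therefore
\[
E_j \;\subset\; \{v_j^+>2M\} \;\subset\; \{|v|>M\}\cup\{|v_j^+-v|>M\};
\]
Chebyshev's inequality together with (\ref{lem1-new}) gives $|\{|v_j^+-v|>M\}|\le \|v_j^+-v\|_{L^1(\Omega)}/M\to 0$, so $\limsup_j |E_j|\le \delta/2$, and $|E_j|\to 0$.

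Next I would upgrade $|E_j|\to 0$ to the claimed rate. Integrating the pointwise lower bound on $v_j$ over $E_j$ gives
\[
\lambda\epsilon\,a_j^{\lambda-1}|E_j| \;\le\; \int_{E_j} v_j^+ \;\le\; \int_{E_j} v \;+\; \|v_j^+-v\|_{L^1(\Omega)}.
\]
The second term tends to $0$ by (\ref{lem1-new}), and the first tends to $0$ by the absolute continuity of the integral, since $v\in L^1(\Omega)$ and $|E_j|\to 0$. Dividing by $\lambda\epsilon$ yields $a_j^{\lambda-1}|E_j|\to 0$.

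The main (modest) obstacle is the case $p=1$, where the $L^p$ norm does not by itself control the pointwise size of $v_j^+$; the two-step strategy above, which first establishes that the concentration set shrinks and only then extracts the factor $a_j^{\lambda-1}$, handles this case together with $1<p\le\infty$ uniformly. For $p=\infty$ the conclusion is immediate, since $\|v_j^+\|_{L^\infty}$ is bounded while $\lambda\epsilon\,a_j^{\lambda-1}\to\infty$ forces $E_j$ to be eventually empty.
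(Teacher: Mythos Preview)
Your proof is correct. Both you and the paper start from the same pointwise lower bound $v_j\ge (a_j+\epsilon)^\lambda-a_j^\lambda\ge \lambda\epsilon\,a_j^{\lambda-1}$ on $E_j=\{\eta_j>\epsilon\}$, but the paper then simply bounds
\[
\big((a_j+\epsilon)^\lambda-a_j^\lambda\big)\,|E_j|\le \int_\Omega v_j^+ = v|\Omega|+o(1),
\]
and concludes. Since $(a_j+\epsilon)^\lambda-a_j^\lambda\sim \lambda\epsilon\,a_j^{\lambda-1}$, this one-step estimate literally gives only $\limsup_j a_j^{\lambda-1}|E_j|\le v|\Omega|/(\lambda\epsilon)$, i.e.\ boundedness rather than convergence to $0$. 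Your two-step argument---first $|E_j|\to 0$, then localize the integral to $E_j$ and use $\int_{E_j}v_j^+\le \int_{E_j}v+\|v_j^+-v\|_{L^1}\to 0$ (by absolute continuity, noting here that $v$ is in fact a positive constant)---is exactly the refinement needed to obtain the full statement $a_j^{\lambda-1}|E_j|\to 0$. In the paper's larger argument this does not matter, because the only place the lemma is invoked (the proof of the next lemma) uses merely $|E_j|\to 0$; but as a proof of the lemma \emph{as stated}, your version is the more complete one.
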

\begin{proof}
Since
\[
v_j\ge (a_j+\epsilon)^\lambda- (a_j)^\lambda>0
\quad\mbox{on}\ \ \{\eta_j>\epsilon\},
\]
we have, using (\ref{lem1-new}), that 
\[
v|\Omega|+\circ(1)=  \int_{ \Omega }  v_j^+\ge
\int_{  \{\eta_j>\epsilon\}  } 
\left(  (a_j+\epsilon)^\lambda- (a_j)^\lambda \right)
=\left(  (a_j+\epsilon)^\lambda- (a_j)^\lambda \right) | \{\eta_j>\epsilon\}|.
\]
Lemma \ref{lem1-5-new} follows from the above since  $\lambda>1$ and $a_j\to \infty$.
\end{proof}

\begin{lem}
For every $\epsilon>0$,
$
  \int_{\Omega }
\left[ (\eta_j-\epsilon)^+\right]^\lambda\to 0$ as $j\to \infty$.
\label{lem1-6-new}
\end{lem}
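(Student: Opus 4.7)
\textbf{Proof plan for Lemma \ref{lem1-6-new}.} The plan is to bound $[(\eta_j-\epsilon)^+]^\lambda$ pointwise by $v_j^+$, and then exploit the fact that $v_j^+$ converges in $L^p(\Omega)$ while the superlevel set $\{\eta_j>\epsilon\}$ shrinks to measure zero by Lemma \ref{lem1-5-new}.

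First I would record the elementary superadditivity inequality $(A+B)^\lambda\ge A^\lambda+B^\lambda$ for $A,B\ge 0$ and $\lambda\ge 1$ (apply calculus to $h(t):=(A+t)^\lambda-A^\lambda-t^\lambda$; it satisfies $h(0)=0$ and $h'(t)=\lambda[(A+t)^{\lambda-1}-t^{\lambda-1}]\ge 0$). Taking $A=a_j$ and $B=\eta_j^+$, and recalling $u_j=a_j+\eta_j\ge 0$, this yields on $\{\eta_j\ge 0\}$ the pointwise bound
\[
v_j \;=\; (a_j+\eta_j)^\lambda-a_j^\lambda \;\ge\; (\eta_j^+)^\lambda.
\]
Since $(\eta_j-\epsilon)^+\le \eta_j^+$ and both vanish off $E_j^\epsilon:=\{\eta_j>\epsilon\}$, it follows that
\[
\int_\Omega \bigl[(\eta_j-\epsilon)^+\bigr]^\lambda \;\le\; \int_{E_j^\epsilon} v_j^+.
\]

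Next I would invoke Lemma \ref{lem1-5-new}: since $a_j\to\infty$ and $\lambda>1$ we have $(a_j)^{\lambda-1}\to\infty$, hence $|E_j^\epsilon|\to 0$. On the other hand (\ref{eq6_2_1}) and (\ref{lem1-new}) give $v_j^+\to v$ in $L^p(\Omega)$, so in particular $\{v_j^+\}$ is bounded in $L^p(\Omega)$. When $p>1$, H\"older's inequality gives
\[
\int_{E_j^\epsilon} v_j^+ \;\le\; \|v_j^+\|_{L^p(\Omega)}\,|E_j^\epsilon|^{1-1/p}\;\longrightarrow\;0,
\]
while for $p=1$ the $L^1$-convergence of $v_j^+$ implies uniform integrability via Vitali's theorem, and then $|E_j^\epsilon|\to 0$ again forces $\int_{E_j^\epsilon} v_j^+\to 0$. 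Combining the two displays yields the lemma.

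The only delicate point is the pointwise inequality $v_j\ge (\eta_j^+)^\lambda$; everything else is a straightforward combination of the measure-decay statement of Lemma \ref{lem1-5-new} with the $L^p$-compactness already in hand from (\ref{est2-new-1}) and (\ref{eq6_2_1}). No new information about $a_j$ beyond $a_j\to\infty$ is needed, and the argument is independent of whether $p$ is finite or infinite (if $p=\infty$, the estimate is immediate since $v_j^+$ is uniformly bounded and $|E_j^\epsilon|\to 0$).
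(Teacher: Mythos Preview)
Your argument is correct and takes a genuinely different route from the paper's proof. The paper proves Lemma \ref{lem1-6-new} via a \emph{gradient} estimate: it sets $\xi_j=[(\eta_j-\epsilon)^+]^\lambda$, observes from Lemma \ref{lem1-5-new} that $|\{\xi_j=0\}|\to|\Omega|$, applies a generalized Poincar\'e inequality to get $\int_\Omega\xi_j\le C\int_\Omega|\nabla\xi_j|$, and then uses the chain rule together with $\lambda\ge 1$ to bound $|\nabla\xi_j|\le C(\eta_j^+)^{\lambda-1}|\nabla\eta_j^+|\le C|\nabla v_j|$ on $\{\eta_j>\epsilon\}$, which tends to zero by (\ref{est2-new-1}). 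Your approach instead exploits the algebraic superadditivity $(a_j+\eta_j^+)^\lambda\ge a_j^\lambda+(\eta_j^+)^\lambda$ to obtain the pointwise bound $[(\eta_j-\epsilon)^+]^\lambda\le v_j^+$ on $E_j^\epsilon$, and then combines the measure decay $|E_j^\epsilon|\to 0$ from Lemma \ref{lem1-5-new} with the $L^p$-compactness of $\{v_j^+\}$ from (\ref{eq6_2_1}). Your route is more elementary in that it avoids any Poincar\'e-type inequality and uses only integrability of the limit; the paper's route, on the other hand, ties the conclusion directly to the smallness of $\|\nabla v_j\|_{L^p}$ and does not need to split into the cases $p>1$, $p=1$, $p=\infty$. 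Both are clean; the paper's argument is slightly more uniform in $p$, while yours makes the dependence on Lemma \ref{lem1-5-new} more transparent.
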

\begin{proof}
 For  $\epsilon>0$,
denote $\xi_j:= \left[ (\eta_j-\epsilon)^+\right]^\lambda$.
By Lemma \ref{lem1-5-new},
$|\{ \xi_j=0\}|\to |\Omega|>0$  as $j\to \infty$. 
Apply a generalized Poincare inequality (see e.g.
Lemma 7.16 and Lemma 7.12 in \cite{GT}--writing $\Omega$ as the  union of finitely many convex open sets and apply these lemmas on each of the convex open sets) and use 
 (\ref{est2-new-1}) and the fact that $\lambda\ge 1$, we have
\[
  \begin{split}
 \int_{ \Omega } \xi_j & \le C  \int_{ \Omega } |\nabla \xi_j|
  \le
C   \int_{ \{\eta_j>\epsilon\} }  \left[(\eta_j-\epsilon)^+\right]^{\lambda-1}   |\nabla \eta_j^+|
\\
& \le C  \int_{ \{\eta_j>\epsilon\}}    (\eta_j^+)^{\lambda-1} |\nabla \eta_j^+|
\le C  \int_{ \{\eta_j>\epsilon\} }  |\nabla v_j|\to 0.
\end{split}
\]
Lemma \ref{lem1-6-new} is established.
\end{proof}

\bigskip

  For every $\epsilon>0$,
write $\eta_j=(\eta_j-\epsilon)+\epsilon\le (\eta_j-\epsilon)^+  + \epsilon$.
Thus
\[
(\eta_j^+)^\lambda \le 2^\lambda  \left[ (\eta_j-\epsilon)^+\right]^\lambda
+2^\lambda \epsilon^\lambda.
\]
It follows, using Lemma \ref{lem1-4-new} and Lemma \ref{lem1-6-new},  that
\[
0<\frac 1C\le   \int_{ S }
(\eta_j^+)^\lambda \le   
C  \int_{  S }   \left[ (\eta_j-\epsilon)^+\right]^\lambda
+2^\lambda \epsilon^\lambda |  S|
\le \circ(1)+ C\epsilon^{\lambda}.
\]
Sending $j$ to $\infty$, we have from the above that $0< 1/C \le C \epsilon^{\lambda}$.  Sending $\epsilon$ to $0$, we have $0< 1/C\le 0$, a contradiction.
 Estimate (\ref{est1-new}) is established in Case 1. 
 
  \bigskip

\noindent\emph{Case 2.} $0<\lambda<1$.

\medskip

  Recall that $p\ge n/ (1+n\lambda)$. 
Since $0<\lambda<1$, the function $s\to s^{\lambda}$ is concave, and we have
\begin{equation*}
\bar v_j\le \big(\avint_{S}u_j\big)^{\lambda}
-(\bar u_j)^\lambda=0.
\label{E}
\end{equation*}
Thus, by (\ref{eq6_2_1}), we have 
\begin{equation}\label{eq6_2_2}
\bar v_j=\avint_{S}v_j\to v<0.
\end{equation}

Fix a $\delta>0$ satisfying $1+\delta\le \min\{2, 1/\lambda\}$.
We will make use of the following elementary fact:  For $0<\lambda<1$,
there exists some positive constant $C$, depending only on $\lambda$ and $\delta$, such that
\begin{equation*}
\left| (1+t)^{ 1/\lambda } -1-\frac 1\lambda t\right|\le
C(|t|^{1+\delta} +|t|^{ 1/\lambda }),
\quad \forall\ -1\le t<\infty.
\label{F}
\end{equation*}

By (\ref{est2-new}),
\begin{equation}\label{G}
u_j =\left( v_j+ (\bar u_j)^\lambda \right)^{ 1/\lambda}. 
\end{equation}
Integrating the above over $S$ gives, with $C$ given by the one in
(\ref{F}), that 
\begin{equation}\label{eq6_2_3}
  \begin{split}
   0 &= \frac 1{  |S|  }\int_{S} \Big( \left[  v_j+ (\bar u_j)^\lambda \right] ^{ 1/\lambda}-\bar u_j\Big) = (\bar u_j) \frac 1{  |S|  }\int_{S} \Big(  \left[ 1+  (\bar u_j) ^{-\lambda} v_j  \right] ^{ 1/\lambda} -1\Big)\\
        & \le \frac 1\lambda (\bar u_j) ^{1-\lambda}\bar v_j +  \frac {C \bar u_j }
{\lambda|S|  } \int_{S} \Big( \left| (\bar u_j) ^{-\lambda} v_j\right|^{1+\delta}+ \left| (\bar u_j) ^{-\lambda} v_j\right|^{1/\lambda} \Big).
\end{split}
\end{equation}
Since $1+\delta\le 1/\lambda$, $W^{1,p}(\Omega)$ embeds into $L^{1/\lambda}(\Omega)$ and $L^{1+\delta}(\Omega)$ by the assumption on $p$. By this and (\ref{eq6_2_1}), we have 
\begin{equation}\label{eq6_2_4}
\|v_j-v\|_{ L^{1+\delta}(\Omega) }\le C\|v_j-v\|_{ L^{1/\lambda}(\Omega) }
\le C\|v_j-v\|_{W^{1, p}(\Omega)}\to 0. 
\end{equation}
We deduce from (\ref{eq6_2_3}), using 
(\ref{eq6_2_2}) and (\ref{eq6_2_4}), that 
\[
|v|+\circ(1)=-\bar v_j  \le C \Big( (\bar u_j) ^{-\delta\lambda}  \int_\Omega  |v_j|^{1+\delta}+   (\bar u_j) ^{\lambda-1}  \int_\Omega  |v_j|^{1/\lambda}\Big)
\le C  \left( (\bar u_j) ^{-\delta\lambda} +(\bar u_j) ^{\lambda-1} \right).  
\]
Since $v\ne 0$, we have the boundedness of  $\{\bar u_j\}$.
Passing to a subsequence,
$\bar u_j\to \alpha$ for some $\alpha\in [0, \infty)$.
 Integrating 
(\ref{G}) over $S$ and using (\ref{eq6_2_4}) 
and $\bar u_j\to\alpha$, we have 
\[
\alpha +\circ(1) 
= \avint_{S}
\Big( v_j+ (\bar u_j)^\lambda \Big)^{ 1/\lambda}
= \Big( v+ \alpha^\lambda \Big)^{ 1/\lambda} +\circ(1).
\]
It follows that $ \alpha = \left( v+ \alpha^\lambda \right)^{ 1/\lambda}$ which implies
that $v= 0$. 
 A contradiction.  Estimate (\ref{est1-new}) is established in Case 2. Step 1 is completed. 

\bigskip

\noindent\textbf{Step 2.} Inequality (\ref{est1-new}) does not hold if $0<\lambda<1$ and $0<p<  n/(1+n\lambda)$. 

\medskip

 For simplicity, we let $\Omega\subset \mathbb{R}^n$ be a bounded open set, and $S\subset \Omega$ has positive Lebesgue measure. Take a Lebesgue point $\bar{x}$ of $S$, i.e. $\lim_{r\to 0^+}|B_r(\bar{x})\cap S|/|B_r(\bar{x})|=1$. For convenience, $\bar{x}=0$ is the Lebesgue point. 
For  small $\epsilon>0$ and large $\alpha>1$, let
\[
v(x)=
\left\{
\begin{array}{ll}
 -1,  &   \mbox{if}\ |x|\ge \epsilon,\\  
 \displaystyle -1+\alpha \Big( 1- \frac {|x|}\epsilon\Big),  &   \mbox{if}\ 
|x|\le \epsilon.
\end{array}
\right.
\]
In the following, $C$ denotes some positive constant independent of $\alpha$ and $\epsilon$. 
A calculation gives
\begin{equation*}
\int_\Omega |v|^p=|\Omega\setminus B_{\epsilon}|+\int_{B_{\epsilon}}|v|^p\ge |\Omega|+\frac{1}{C}\alpha^p\epsilon^n, 
\end{equation*}
\[
\int_\Omega |\nabla v|^p =\alpha^p \epsilon^{n-p} |B_1|,
\]
\[
\int_{S} (v+1)^{1/\lambda}
= \alpha^{ 1/\lambda} \epsilon^n \int_{ \{|y|\le 1, \ \epsilon y\in S\}} (1-|y|)^{1/\lambda}dy, 
\]
where $|O(\alpha^p\epsilon^n)|\le C\alpha^p\epsilon^n$.  

Since $0$ is a Lebesgue point of $S$, we have 
\[
   \lim_{\epsilon\to 0^+}\frac{|\{|y|\le 1, \ \epsilon y\in S\}|}{\{|y|\le 1\}}=\lim_{\epsilon\to 0^+}\frac{|B_{\epsilon}(0)\cap S|}{|B_{\epsilon}(0)|}=1. 
\]
It follows that 
\[
    \lim_{\epsilon\to 0^+}\int_{\{|y|\le 1, \ \epsilon y\in S\}}(1-|y|)^{1/\lambda}dy=\int_{|y|\le 1}(1-|y|)^{1/\lambda}dy>0. 
\]
Now we fix the value of $\alpha$ so that
$\int_{S} (v+1)^{1/\lambda}= |S|$.  
 So $\alpha\le C \epsilon^{-n\lambda}$.

Consider
\[
    u:= (v+1)^{1/\lambda}.
\]
Then $\bar u=\avint_{S}u=1$, $u\ge 0$,  $v= u^\lambda- \bar u^{\lambda}$.
Using $p<n/(1+n\lambda)$, 
\[
   \int_\Omega |\nabla v|^p\le C\alpha ^p \epsilon^{n-p}\le C\epsilon^{ n-(1+n\lambda)p} \to 0.
\]
This and (\ref{eq6_2_3}) violate (\ref{est1-new}) for any choice of $C$. Step 2 is completed.


\bigskip

\noindent\textbf{Step 3.} Inequality (\ref{est1-new}) does not hold if $0<\lambda<\infty$ and $0<p<1$. 

\medskip

For simplicity, we take $\Omega=S=[-1, 1]^n$. For $\alpha>0$ small, let 
\[
   f(x_1):=\left\{
     \begin{array}{ll}
         |x_1|^{\alpha}, & x_1<0,\\
         -|x_1|^{\alpha}, & x_1\ge 0, 
     \end{array}
   \right.
\]
and  
\[
   w(x):=(2+f(x_1))^{\lambda}.
\]
Then $w\in W^{1, p}([-1, 1]^n)$ and $w\ge 1$. 

By the definition of $w$, we have $\avint_{[-1, 1]^n}w^{1/\lambda}=2$. Let $v=w-(\avint_{[-1, 1]^n}w^{1/\lambda})^{\lambda}$.  We have, 
   for  some constant $C>0$ depending only on $\lambda$ and $p$, that  
   \begin{equation}\label{eq6_2_6}
     \int_{[-1, 1]^n}|v(x)|^pdx\ge \int_{[1/2, 1]^n}|v(x)|^pdx=\int_{[1/2, 1]^n}|(2+|x_1|^{\alpha})^{\lambda}-2^{\lambda}|^pdx\ge \frac{1}{C}.  
   \end{equation}
   On the other hand, by the assumption that $0<p<1$, we have 
   \[
     \begin{split}
     \int_{[-1, 1]^n} |\nabla v|^pdx &   =  \int_{[-1, 1]^n}|\lambda(2+f(x_1))^{\lambda-1}f'(x_1)|^pdx \\
     &  \le  C\int_{-1}^{1}\alpha^p|x_1|^{(\alpha-1)p}dx_1\le C\alpha^{p} \to 0
     \end{split}
   \]
   as $\alpha\to 0$. 
   This and (\ref{eq6_2_6}) violate (\ref{est1-new}).  Step 3 is completed. Theorem \ref{thm1-new} is proved.
\qed

\bigskip


\noindent\emph{Proof of Corollary \ref{cor_new}}:
  If $q\le 1$, then,  by Theorem \ref{thm1-new} with $\lambda=1/q$, we have
\[
   \|w\|_{L^p(\Omega)}\le \big(\avint_{S}w^q\big)^{1/q}\cdot |\Omega|^{1/p}+ \|w-\big(\avint_{S}w^q\big)^{1/q}\|_{L^p(\Omega)}\le \big(\avint_Sw^{q}\big)^{1/q}|\Omega|^{1/p}+C \|\nabla w \|_{ L^p(\Omega) }. 
\]
If $q>1$, then (\ref{est1-newcor}) follows from the result for $q=1$ and H\"{o}lder's inequality.
 The corollary is proved.
\qed


  \section{Extension of the Caffarelli-Kohn-Nirenberg inequalities from $q\ge 1$ to $q>0$}\label{sec_4}
   
   In this section, we prove Theorem \ref{thmD_2}. The necessity part has been established in Section \ref{sec_2}. The sufficiency part follows from the following theorem which includes the  inequalities on cones. 
   
     Let
$
\mathbb{S}^{n-1}:= \{x\in \mathbb{R}^{n}\ |\ |x|=1\}
$
be the unit sphere in $\mathbb{R}^{n}$.
For  any $\Omega\subset\mathbb{S}^{n-1}$ with nonempty Lipchitz boundary, denote the cone
\begin{equation}\label{eq_cone}
   K:=\{rx\mid r\ge 0,  \    x\in \Omega\}. 
\end{equation}
    \begin{thm}\label{thmQ_2}
   Let  $n\ge 1$, $K=\mathbb{R}^n$ or $K$ be as above,    and  $s, p, q, \gamma_1, \gamma_2, \gamma_3, a$ satisfy  (\ref{eqNCA_1}) and  (\ref{eqNCB_2})-(\ref{eqNCB_5}). 
        Then there exists some positive constant $C$ 
     such that for all $u\in C^{0, 1}_c(\overline{K})$  
       \begin{equation}\label{eqQ2_1}
       \||x|^{\gamma_1}u\|_{L^s(K)}\le 
       C\||x|^{\gamma_2}\nabla u\|_{L^p(K)}^a\||x|^{\gamma_3}u\|_{L^q(K)}^{1-a}. 
  \end{equation}
  Furthermore, on any compact set in the parameter space in which (\ref{eqNCA_1}) and (\ref{eqNCB_2}) hold, 
    the constant $C$ is bounded. 
  \end{thm}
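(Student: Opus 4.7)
My plan is to split into the endpoint cases $a\in\{0,1\}$ and the main case $0<a<1$. For $a=0$, conditions \eqref{eqNCB_3}--\eqref{eqNCB_5} force $1/s+\gamma_1/n=1/q+\gamma_3/n$, $\gamma_1\le\gamma_3$, and $1/s\le1/q$, so the inequality $\||x|^{\gamma_1}u\|_{L^s(K)}\le C\||x|^{\gamma_3}u\|_{L^q(K)}$ reduces to Hölder applied directly on the cone. For $a=1$, it is a weighted Hardy--Sobolev inequality on $K$ provable by polar integration along the argument the paper has already given for \eqref{eqB_1_0} (with $\alpha=\mu=\beta=0$), restricted to the cone; this does not involve $q$ at all.

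For the main case $0<a<1$, the approach is a dyadic radial decomposition $K=\bigcup_{k\in\mathbb{Z}}A_k$, $A_k:=\{x\in K:2^k\le|x|<2^{k+1}\}$, with rescaling $u_k(y):=u(2^ky)$ transferring the estimate to the fixed annular Lipschitz domain $A_0=\{y\in K:1\le|y|<2\}$ on which the weights $|x|^{\gamma_i}$ are essentially the constants $2^{k\gamma_i}$. On $A_0$ one needs a local Gagliardo--Nirenberg-type interpolation inequality. When $q\ge 1$ this is classical. When $0<q<1$, the crucial new input is the nonlinear Poincaré inequality (Corollary \ref{cor_new}, which follows from Theorem \ref{thm1-new}): it provides, for any fixed positive-measure $S_0\subset A_0$,
$$\|u_k\|_{L^1(A_0)}\le |A_0|\Bigl(\avint_{S_0}u_k^q\Bigr)^{1/q}+C\|\nabla u_k\|_{L^1(A_0)},$$
which substitutes for the failed $L^q$-interpolation. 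Chaining this with classical Gagliardo--Nirenberg on $A_0$ using the $L^1$-norm on the right, Hölder to bound $\|\nabla u_k\|_{L^1(A_0)}$ by $\|\nabla u_k\|_{L^p(A_0)}$, and the trivial comparison $(\avint_{S_0}u_k^q)^{1/q}\le C\|u_k\|_{L^q(A_0)}$, I would derive a per-annulus estimate which, after restoring weights and rescaling back, takes the form
$$\||x|^{\gamma_1}u\|_{L^s(A_k)}\le C\,\||x|^{\gamma_2}\nabla u\|_{L^p(A_k)}^{a}\||x|^{\gamma_3}u\|_{L^q(A_k)}^{1-a}+C\cdot\mbox{(lower-order error)},$$
with scale-invariance on the product term coming from \eqref{eqNCB_3}. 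The global inequality then follows by raising to the $s$-th power, summing over $k\in\mathbb{Z}$, and applying Hölder on the $k$-sum to separate $(\sum_k\||x|^{\gamma_2}\nabla u\|_{L^p(A_k)}^p)^{a/p}=\||x|^{\gamma_2}\nabla u\|_{L^p(K)}^a$ from $(\sum_k\||x|^{\gamma_3}u\|_{L^q(A_k)}^q)^{(1-a)/q}=\||x|^{\gamma_3}u\|_{L^q(K)}^{1-a}$.

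The main obstacle I anticipate is the precise matching of exponents. The Gagliardo--Nirenberg exponent on $A_0$ coincides with the prescribed $a$ only when \eqref{eqNCB_4} is an equality; the slack $a\gamma_2+(1-a)\gamma_3-\gamma_1\ge 0$ translates into a $k$-dependent prefactor upon restoring weights, and controlling it uniformly requires a delicate Young/Hölder argument on the $k$-sum. The most singular sub-case is the threshold $1/s+\gamma_1/n=1/p+(\gamma_2-1)/n=1/q+\gamma_3/n$ inside \eqref{eqNCB_5}, where the $k$-sum becomes scale-critical and the naive summation diverges; closing the argument here uses exactly the strict bound $1/s\le a/p+(1-a)/q$ in \eqref{eqNCB_5}, applied through the sequence embedding $\ell^r\hookrightarrow\ell^s$ with $1/r=a/p+(1-a)/q$. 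Controlling the lower-order error term from the $L^1$-Poincaré step is a secondary technical issue, handled by bootstrapping through intermediate $L^t$ scales between $L^1$ and $L^p$ via standard Sobolev embeddings on $A_0$.
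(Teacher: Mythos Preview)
Your dyadic-annulus scaffolding is sound, but the gap lies exactly where you call the issue ``secondary.'' Any self-contained estimate on a single shell $A_0$ must carry an additive error (nonzero constants violate the pure product bound), and after restoring weights this error picks up a factor $2^{k\sigma}$ with $\sigma=n\bigl(\tfrac{1}{s}+\tfrac{\gamma_1}{n}-\tfrac{1}{q}-\tfrac{\gamma_3}{n}\bigr)$, generically nonzero. Summing over $k$ then yields at best $C\||x|^{\gamma_3}u\|_{L^q(K)}^{s}$ on the right, which is not scale-equivalent to the target product $\||x|^{\gamma_2}\nabla u\|_{L^p}^{as}\||x|^{\gamma_3}u\|_{L^q}^{(1-a)s}$ and cannot be absorbed uniformly in $u$; the ``bootstrapping through intermediate $L^t$ scales'' does not touch this homogeneity mismatch. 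A second gap: your $\ell^r\hookrightarrow\ell^s$ summability step needs $1/s\le a/p+(1-a)/q$, but for $0<a<1$ away from the threshold in \eqref{eqNCB_5} the opposite inequality is permitted, and you give no argument there. Also, your $a=1$ sketch appeals to the computation for \eqref{eqB_1_0}, which is specific to $s=p$ and does not cover the general $a=1$ case.

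The paper closes the main gap differently. Instead of a self-contained per-shell bound it proves (Lemma~\ref{lemQ_1}) a \emph{recursion} between adjacent shells: writing $|u(x)|^s=|u(x)|^{(1-a)s}\cdot|u(x)|^{as}$ for $x\in R_k$ and comparing only the second factor to $u(\xi)$ for $\xi\in R_{k+1}$ via the mean $\bar u=\avint_{R_{k+1}}u$, one obtains $A_k\le\theta A_{k+1}+C(M_k+M_{k+1})^{as}N_k^{(1-a)s/q}$ with a genuine contraction $\theta<1$ coming from $|R_k|/|R_{k+1}|=2^{-n}$. Summing the geometric recursion pushes all the ``error'' into a single boundary term $A_1$, which vanishes once one rescales $\supp u$ into $B_1$. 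The oscillation $u-\bar u$ is controlled by the \emph{classical} Poincar\'e and Sobolev inequalities on $R_k\cup R_{k+1}$; Theorem~\ref{thm1-new} and Corollary~\ref{cor_new} are \emph{not} used anywhere in this proof---they enter only in Section~\ref{sec_5} for the anisotropic estimate. The extension to $0<q<1$ comes for free from the pointwise splitting together with H\"older at exponent $1/s=a/t+(1-a)/q$, which is valid for all $q>0$. The residual range $1/s>a/p+(1-a)/q$ is then handled by a separate interpolation-in-$a$ reduction (Lemma~\ref{lemPre_3}).
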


 \begin{lem}\label{lemQ_1}
   Let  $n\ge 1$, $0<r_1<r_2<\infty$, $K=\mathbb{R}^n$ or $K$ be given by (\ref{eq_cone}),       $s, p, q, a, \gamma_1, \gamma_2$ and $\gamma_3$ satisfy  (\ref{eqNCA_1}), (\ref{eqNCB_3}), (\ref{eqNCB_4}), 
      $1/s+\gamma_1/n>0$, and $1/s\le a/p+(1-a)/q$. 
     Then there exists some positive constant $C$, depending only on $s, q, a, \gamma_1, \gamma_2, \gamma_3$, $r_1, r_2$ and $\Omega$,  
     such that for all $u\in C^{0, 1}(K\cap B_{r_2})$, 
      \begin{equation}\label{eqQ_1}
       \||x|^{\gamma_1}u\|_{L^s(K\cap B_{r_1})}\le C \||x|^{\gamma_1}u\|_{L^s(K\cap B_{r_2}\setminus B_{r_1})}+
       C\||x|^{\gamma_2}\nabla u\|_{L^p(K\cap B_{r_2})}^a\||x|^{\gamma_3}u\|_{L^q(K\cap B_{r_1})}^{1-a}. 
  \end{equation}
  \end{lem}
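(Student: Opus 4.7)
Without loss of generality, take $u\ge 0$ (since $|u|$ is Lipschitz with $|\nabla |u||=|\nabla u|$ a.e., and both sides of (\ref{eqQ_1}) are unaffected). I would apply Corollary \ref{cor_new} on $\Omega:=K\cap B_{r_2}$, an open connected Lipschitz set, with positive-measure subset $S:=K\cap(B_{r_2}\setminus B_{r_1})$ and with $w=|x|^{\gamma_1}u$ (or an appropriate power thereof, depending on whether $s\ge 1$ or $s<1$; in the latter case one invokes Theorem \ref{thm1-new} directly with $\lambda=1/s>1$). This gives the Poincar\'{e}-type baseline
\bee
\||x|^{\gamma_1}u\|_{L^s(\Omega)}\le C\||x|^{\gamma_1}u\|_{L^s(S)}+C\|\nabla(|x|^{\gamma_1}u)\|_{L^s(\Omega)},
\eee
with the first term being precisely the shell term in (\ref{eqQ_1}). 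Expanding $|\nabla(|x|^{\gamma_1}u)|\le|x|^{\gamma_1}|\nabla u|+|\gamma_1|\,|x|^{\gamma_1-1}u$ reduces the task to controlling the ``gradient'' term $\||x|^{\gamma_1}\nabla u\|_{L^s(\Omega)}$ and the ``singular weight'' term $\||x|^{\gamma_1-1}u\|_{L^s(\Omega)}$ by the product $\||x|^{\gamma_2}\nabla u\|_{L^p(K\cap B_{r_2})}^{a}\||x|^{\gamma_3}u\|_{L^q(K\cap B_{r_1})}^{1-a}$ (modulo further shell contributions to be absorbed).

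The singular-weight term is handled by a weighted Hardy-type argument: writing $\int_\Omega |x|^{\gamma_1 s-1}u^s\,dx$ in spherical coordinates and integrating by parts in $r$ against $r^{\gamma_1 s+n-1}$, the assumption $1/s+\gamma_1/n>0$ kills the origin boundary contribution, while averaging the outer boundary term in $R\in(r_1,r_2)$ converts it into a volume shell integral. The resulting inequality
\bee
\||x|^{\gamma_1-1}u\|_{L^s(\Omega)}^{s}\le C\||x|^{\gamma_1}u\|_{L^s(S)}^{s}+C\int_\Omega |x|^{\gamma_1 s}u^{s-1}|\nabla u|\,dx
\eee
reduces everything to the ``mixed integral'' on the right, and the gradient term $\int_\Omega|x|^{\gamma_1 s}|\nabla u|^s$ is amenable to an analogous H\"{o}lder decomposition.

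To treat the mixed integral, I would apply H\"{o}lder's inequality after writing its integrand as $(|x|^{\gamma_2}|\nabla u|)\cdot (|x|^{\gamma_3}u)^{s-1}\cdot |x|^{E}$ with $E=\gamma_1 s-\gamma_2-(s-1)\gamma_3$, using H\"{o}lder exponents $(p,q/(s-1),r)$ with $r$ determined so that $1/p+(s-1)/q+1/r=1$. The scaling identity (\ref{eqNCB_3}) provides exactly this balance, and the conditions $\gamma_1\le a\gamma_2+(1-a)\gamma_3$ together with $1/s\le a/p+(1-a)/q$ force $E\ge 0$, so that $|x|^E$ is bounded on $\Omega$. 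After splitting $\||x|^{\gamma_3}u\|_{L^q(\Omega)}$ into its inner-ball and shell contributions and absorbing the shell piece into the $L^s(S)$ term (using comparability of weights on $S$, together with a further application of the corollary on the shell to pass between $L^q(S)$ and $L^s(S)$ when $q>s$), one obtains (\ref{eqQ_1}). The main obstacle is the singular-weight Hardy step and the bookkeeping needed for continuity of all the exponents in the parameters (so that $C$ remains bounded on compact parameter subsets); the endpoint cases $s=1$ and $a\in\{0,1\}$ require separate but simpler variants of the same argument.
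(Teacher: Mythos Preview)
Your plan has a structural gap. After the Poincar\'e baseline you must control $\|\nabla(|x|^{\gamma_1}u)\|_{L^s(\Omega)}$, one piece of which is $\||x|^{\gamma_1}\nabla u\|_{L^s(\Omega)}$. There is no mechanism to bound this by $\||x|^{\gamma_2}\nabla u\|_{L^p}^{a}\||x|^{\gamma_3}u\|_{L^q}^{1-a}$: any H\"older splitting must place all of $|\nabla u|^s$ inside the $L^p$ factor, which requires $s\le p$, whereas the hypothesis $1/s\le a/p+(1-a)/q$ typically gives $s\ge p$ (e.g.\ always when $a=1$). Your mixed-integral step has the same defect: H\"older with exponents $(p,\,q/(s-1),\,r)$ yields $\||x|^{\gamma_2}\nabla u\|_{L^p}\cdot\||x|^{\gamma_3}u\|_{L^q}^{s-1}$, which matches the target powers $as$ and $(1-a)s$ only when $a=1/s$; neither (\ref{eqNCB_3}) nor (\ref{eqNCB_4}) forces $E\ge 0$ in general. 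There is also a preliminary obstruction: the hypothesis $1/s+\gamma_1/n>0$ does \emph{not} imply $1/s+(\gamma_1-1)/n>0$, so for $-n/s<\gamma_1\le 1-n/s$ the function $|x|^{\gamma_1}u$ fails to lie in $W^{1,s}(K\cap B_{r_2})$ and Corollary~\ref{cor_new} is inapplicable; relatedly, the origin boundary term in your Hardy step carries the exponent $(\gamma_1-1)s+n$, not $\gamma_1 s+n$, so the stated assumption does not kill it.

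The paper's argument is organized around a dyadic decomposition and never works on a full neighborhood of the origin. For $p=1$, $\gamma_1=0$ it sets $R_k=\{2^{k-1}\le|x|<2^k\}$, writes $|u(x)|^s=|u(x)|^{(1-a)s}|u(x)|^{as}$ for $x\in R_k$, and compares $|u(x)|^{as}$ to $|u(\xi)|^{as}$ for $\xi\in R_{k+1}$ through the mean $\bar u=\avint_{R_{k+1}}u$; Sobolev--Poincar\'e on the unweighted annulus $R_k\cup R_{k+1}$ controls the differences. This produces a recursion $A_k\le\theta A_{k+1}+C(M_k+M_{k+1})^{as}N_k^{(1-a)s/q}$ with $\theta<1$, which sums over $k\le 0$ using (\ref{eqD_C}). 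General $\gamma_1$ is reduced to $\gamma_1=0$ by the change of variables $y=|x|^{\gamma_1 s/n}x$, and $p>1$ to $p=1$ by applying the $p=1$ case to $|u|^{s/\bar s}$ with $1/\bar s=1/s+1-1/p$ followed by H\"older and Young. The dyadic recursion is what renders the origin harmless; a single Poincar\'e on $K\cap B_{r_2}$ cannot substitute for it.
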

\begin{proof}
For simplicity, we only prove (\ref{eqQ_1}) for $r_1=1$ and $r_2=2$. The general case can be proved similarly. 
 
     For $a=0$, we deduce from (\ref{eqNCB_3}), (\ref{eqNCB_4}) and $1/s\le a/p+(1-a)/q$      that $\gamma_1=\gamma_3$ and $s=q$, thus (\ref{eqQ_1}) is obvious. 
In the rest of the proof we assume $0<a\le 1$.  Without loss of generality, assume $u\ge 0$. 

\bigskip

\noindent\textbf{Step 1.} We  prove (\ref{eqQ_1}) for $p=1$ and $\gamma_1=0$. 

\medskip 

Let
\[
  R_k:=\{x\in K \mid 2^{k-1}\le |x|\le 2^{k}\}, \quad k\in \mathbb{Z}.   
 \]
Denote 
   \begin{equation*}
          A_k:=\int_{R_k}|u|^sdx,\quad 
  M_k:=\int_{R_k}||x|^{\gamma_2}\nabla u(x)|dx, \quad  N_k:=\int_{R_k}||x|^{\gamma_3}u|^qdx. 
   \end{equation*}

We first establish for any $0<\epsilon<2^{an}-1$ that 
 \begin{equation}\label{eqQ_2}
      A_k \le \theta A_{k+1}+C(M_{k}+M_{k+1})^{as}N_k^{(1-a)s/q}, \quad k\in \mathbb{Z}, 
   \end{equation}
   where 
   \begin{equation*}
      \theta:=\frac{a(1+\epsilon)}{2^{an}-(1+\epsilon)(1-a)}, 
   \end{equation*}
   and $C$ depends only on $s, q, a, \gamma_1, \gamma_2, \gamma_3$, $r_1, r_2, K$ and $\epsilon$. 
Since $K$ is a cone, by (\ref{eqNCB_3}) and scaling, we only need to prove (\ref{eqQ_2}) for $k=0$.

Let $\bar{u}=\avint_{R_1}u(y)dy$. For any $0<\epsilon<2^{an}-1$, $x\in R_0$ and $\xi\in R_1$, 
we have
\begin{equation}
\begin{split}
   |u(x)|^s & =|u(x)|^{(1-a)s}|u(x)|^{as}\\
   & \le |u(x)|^{(1-a)s}(|u(x)-\bar{u}|+|\bar{u}-u(\xi)|+|u(\xi)|)^{as}\\
   & \le (1+\epsilon) |u(x)|^{(1-a)s}|u(\xi)|^{as}+C|u(x)|^{(1-a)s}(|u(x)-\bar{u}|+|\bar{u}-u(\xi)|)^{as}.
   \end{split}
\end{equation} 
Taking $\avint_{R_1}\int_{R_0}\cdot dxd\xi$ of the above and using H\"{o}lder's inequality, we have 
\begin{equation}\label{eqQ_2_1}
   \begin{split}
     A_0 & \le (1+\epsilon)\avint_{R_1}|u(\xi)|^{as}d\xi\int_{R_0}|u(x)|^{(1-a)s}dx+C\int_{R_0}|u(x)|^{(1-a)s}|u(x)-\bar{u}|^{as}dx\\
     &+C\int_{R_0}|u(x)|^{(1-a)s}dx\avint_{R_1}|\bar{u}-u(\xi)|^{as}d\xi\\
     &\le (1+\epsilon)\frac{|R_0|^a}{|R_1|^a}\left(\int_{R_1}|u(\xi)|^sd\xi\right)^a\left(\int_{R_0}|u(x)|^sdx\right)^{1-a}+C\int_{R_0}|u(x)|^{(1-a)s}|u(x)-\bar{u}|^{as}dx\\
     &+C\int_{R_0}|u(x)|^{(1-a)s}dx\avint_{R_1}|\bar{u}-u(\xi)|^{as}d\xi\\
     & =:(1+\epsilon)\frac{|R_0|^a}{|R_1|^a}A_0^{1-a}A_1^a+C(I_1+I_2). 
   \end{split}
\end{equation}
Since $p=1$, by (\ref{eqNCB_3}) and (\ref{eqNCB_4}), we have $1/s\ge a(1-1/n)+(1-a)/q$. Since we are in the case $1/s\le a/p+(1-a)/q$,  we have $a(1-1/n)+(1-a)/q\le 1/s\le a+(1-a)/q$. Thus there exist some $1\le t\le n/(n-1)$ ($1\le t\le \infty$ when $n=1$) such that $1/s=a/t+(1-a)/q$. Then by H\"{o}lder's inequality, Sobolev inequality and Poincar\'{e}'s inequality, we have 
\begin{equation}\label{eqQ_3}
\begin{split}
   I_1 & \le C\|u-\bar{u}\|_{L^t(R_0\cup R_1)}^{as}\|u\|_{L^q(R_0)}^{(1-a)s}\\
   & \le C\big( \|u-\bar{u}\|_{L^1(R_0\cup R_1)}+\|\nabla(u-\bar{u})\|_{L^1(R_0\cup R_1)}\big)^{as}N_0^{(1-a)s/q}\\
   & \le C(M_0+M_1)^{as}N_0^{(1-a)s/q}. 
   \end{split}
\end{equation}
Similarly, we have 
\begin{equation}\label{eqQ_4}
\begin{split}
   I_2 & \le C\|u-\bar{u}\|_{L^t(R_1)}^{as}\|u\|_{L^q(R_0)}^{(1-a)s}\\
   & \le C\big( \|u-\bar{u}\|_{L^1(R_1)}+\|\nabla(u-\bar{u})\|_{L^1(R_1)}\big)^{as}N_0^{(1-a)s/q}\\
   & \le  CM_1^{as}N_0^{(1-a)s/q}. 
   \end{split}
\end{equation}
By (\ref{eqQ_2_1}), (\ref{eqQ_3}), (\ref{eqQ_4}) and the fact that $|R_0|/|R_1|=2^{-n}$,  we have, for any $0<\epsilon<2^{an}-1$, that
\[
    \begin{split}
  A_0 
  & \le (1+\epsilon)\frac{|R_0|^a}{|R_1|^a}A_0^{1-a}A_1^{a}+C(M_0+M_1)^{as}N_0^{(1-a)s/q}\\
   & \le (1+\epsilon)2^{-an}((1-a)A_0+aA_1)+C(M_0+M_1)^{as}N_0^{(1-a)s/q}. 
    \end{split}
\]
Thus 
\[
   A_0\le \frac{a(1+\epsilon)}{2^{an}-(1+\epsilon)(1-a)}A_1+C(M_0+M_1)^{as}N_0^{(1-a)s/q}.
\]
So (\ref{eqQ_2}) holds for $k=0$, and therefore holds for all $k\in \mathbb{Z}$.

Since $a>0$, $2^{an}>1$, and $0<\epsilon<2^{an}-1$, we have 
  $0<\theta<1$.  
 
  For $c, d\ge 0$, $c+d\ge 1$,  and sequences $x_n, y_n\ge 0$, $n\ge 1$, we have 
\begin{equation}\label{eqD_C}
   \sum_{n=1}^{\infty} x_n^cy_n^d\le \Big(\sum_{n=1}^{\infty} x_n\Big)^c\Big(\sum_{n=1}^{\infty} y_n\Big)^d. 
\end{equation}
 
 Take the sum of (\ref{eqQ_2}) over $k\le 0$, by the fact $1/s\le a+(1-a)/q$ and  (\ref{eqD_C}) with $c=(1-a)s/q$ and $d=as$, 
  we have, that
   \[
     \begin{split}
      \sum_{k\le 0}A_k & \le \theta A_1+\theta \sum_{k\le 0}A_k+C \sum_{k\le 0}(M_{k}+M_{k+1})^{as}N_k^{(1-a)s/q}\\
                                                      & \le \theta A_1+\theta \sum_{k\le 0}A_k+C\Big( \sum_{k\le 0}(M_{k}+M_{k+1})\Big)^{as}\Big( \sum_{k\le 0}N_k\Big)^{(1-a)s/q}. \\
     \end{split}
   \]
   So 
   \begin{equation}\label{eqQ_8}
      \begin{split}
         \int_{K\cap B_1}|u|^sdx & =
      \sum_{k\le 0}A_k   \le \frac{\theta}{1-\theta} A_1+C\Big(\sum_{k\le 0}(M_{k}+M_{k+1})\Big)^{as}\Big(\sum_{k\le 0}N_k\Big)^{(1-a)s/q}\\
                                                         & \le \frac{\theta}{1-\theta} \int_{R_1}|u|^sdx+C \left(\int_{K\cap B_2}||x|^{\gamma_2}\nabla u|dx\right)^{as}\left(\int_{K\cap B_1}||x|^{\gamma_3}u|^qdx\right)^{(1-a)s/q}. 
      \end{split}
   \end{equation}
  Thus when $p=1$ and $\gamma_1=0$, (\ref{eqQ_1}) follows from (\ref{eqQ_8}). 
 
 \bigskip
  
 \noindent \textbf{Step 2.} We prove (\ref{eqQ_1}) for $p=1$ and $\gamma_1\ne 0$. 
 
 \medskip
 
  We will reduce it to Step 1.  Make a change of variables $y=|x|^{\gamma_1 s/n}x$, and define $\tilde{u}(y):=u(x)$,   
  $\tilde{\gamma}_1=0$, $\tilde{\gamma}_2=(\gamma_2n+\gamma_1s(1-n))/(\gamma_1 s+n)$ and $\tilde{\gamma}_3=(\gamma_3 q-\gamma_1 s)n/(\gamma_1 s+n)q$. 
 We have $s, q>0$ from (\ref{eqNCA_1}) and $1/s+\tilde{\gamma}_1/n=1/s>0$. 
  By computation and using (\ref{eqNCB_3}), 
   \begin{equation}\label{eqQ_11}
   a\Big(1+\frac{\tilde{\gamma}_2-1}{n}\Big)+(1-a)\Big(\frac{1}{q}+\frac{\tilde{\gamma}_3}{n}\Big)=\frac{n}{\gamma_1 s+n}\Big(a\Big(1+\frac{\gamma_2-1}{n}\Big)+(1-a)\Big(\frac{1}{q}+\frac{\gamma_3}{n}\Big)\Big)=\frac{1}{s}.
   \end{equation}
   Next, by (\ref{eqNCB_3}) and (\ref{eqNCB_4}) with $p=1$, we have $1/s\ge a(1-1/n)-(1-a)/q$. Use this and (\ref{eqQ_11}), 
   we have 
   \begin{equation*}
      a\tilde{\gamma}_2+(1-a)\tilde{\gamma}_3=n\Big(\frac{1}{s}-a\Big(1-\frac{1}{n}\Big)-\frac{1-a}{q}\Big)\ge 0. 
   \end{equation*}
   So we have verified (\ref{eqNCA_1}), (\ref{eqNCB_3}), (\ref{eqNCB_4}), and $1/s+\tilde{\gamma}_1/n>0$ with $\tilde{\gamma}_1=0$.  
     By this and the fact that $1/s\le a/p+(1-a)/q$,  apply Step 1 to $\tilde{u}(y)$ and $\tilde{\gamma}_1, \tilde{\gamma}_3, \tilde{\gamma}_3$, we have
  \begin{equation}\label{eqQ_10}
     \|\tilde{u}\|_{L^s(K\cap B_1)}\le C \|\tilde{u}\|_{L^s(K\cap B_{R}\setminus B_1)}+
     C\||y|^{\tilde{\gamma}_2}\nabla\tilde{u}\|_{L^1(K\cap B_{R})}^a\||y|^{\tilde{\gamma}_3}\tilde{u}\|_{L^q(K\cap B_{1})}^{1-a}. 
   \end{equation}
   Since $\gamma_1 s/n+1>0$, we have, with $R:=2^{\gamma_1 s/n+1}>1$, that
    \[ 
      \begin{split}
         & \int_{K\cap B_1}||x|^{\gamma_1} u(x)|^sdx= \frac{n}{\gamma_1 s+n} \int_{K\cap B_1}|\tilde{u}(y)|^sdy, \\
         & \int_{K\cap B_2}||x|^{\gamma_2}\nabla u(x)|dx= \int_{K\cap B_{R}}||y|^{\tilde{\gamma}_2}\nabla\tilde{u}(y)|dy,\\
         &  \int_{K\cap B_{1}}||x|^{\gamma_3} u(x)|^qdx 
         =\frac{n}{\gamma_1 s+n}\int_{K\cap B_{1}}||y|^{\tilde{\gamma}_3}\tilde{u}(y)|^qdy.  
      \end{split}
   \]
   By (\ref{eqQ_10}) and the above, we have (\ref{eqQ_1}).

     \bigskip
     
     \noindent\textbf{Step 3.} We prove (\ref{eqQ_1}) for $p> 1$. 
     
     \medskip
     
     Let $\bar{s}, \bar{p}, \bar{q}, \bar{a}, \bar{\gamma}_1, \bar{\gamma}_2$ and $\bar{\gamma}_3$ be defined by   
  \begin{equation*}
    \begin{split}
    &   \frac{1}{\bar{s}}=\frac{1}{s}+\frac{1}{p'}, \quad \bar{p}=1, \quad \frac{1}{\bar{q}}=\frac{s}{\bar{s}q},  
    \quad \bar{a}=\frac{as}{(1-a)\bar{s}+as}, \\
    & \bar{\gamma}_1=\frac{\gamma_1s}{\bar{s}}, \quad \bar{\gamma}_2=\frac{\gamma_1s}{p'}+\gamma_2, \quad \bar{\gamma}_3=\frac{\gamma_3s}{\bar{s}}, \\
        \end{split}
   \end{equation*} 
   where $1/p+1/p'=1$.

It can be verified that $0<\bar{s}< s$, and $\bar{s}, \bar{p}, \bar{q}, \bar{a}, \bar{\gamma}_1, \bar{\gamma}_2, \bar{\gamma}_3$ satisfy (\ref{eqNCA_1}), (\ref{eqNCB_3}), (\ref{eqNCB_4}), $1/\bar{s}+\bar{\gamma}_1/n>0$, and $1/\bar{s}\le \bar{a}\bar{p}+(1-\bar{a})/\bar{q}$ (for detail of the verification, see Lemma \ref{lemPre2_1}). 
    So we can apply Step 2  
     to $|u|^{s/\bar{s}}$ to obtain,  using  H\"{o}lder's inequality and Young's inequality, that 
       \begin{equation*}
    \begin{split}
    \displaystyle
    &\quad  \||x|^{\gamma_1}u\|_{L^s(K\cap B_{1})}^{s/\bar{s}} 
      = \||x|^{\bar{\gamma}_1}|u|^{s/\bar{s}}\|_{L^{\bar{s}}(K\cap B_{1})}\\
     & \le C \||x|^{\bar{\gamma}_1}|u|^{s/\bar{s}}\|_{L^{\bar{s}}(K\cap B_2\setminus B_1)}+C\||x|^{\bar{\gamma}_2}\nabla |u|^{s/\bar{s}}\|_{L^1(K\cap B_2)}^{\bar{a}}\||x|^{\bar{\gamma}_3}|u|^{s/\bar{s}}\|_{L^{\bar{q}}(K\cap B_1)}^{1-\bar{a}}\\
     & \le C \||x|^{\gamma_1}u\|^{s/\bar{s}}_{L^{s}(K\cap B_2\setminus B_1)}+C\||x|^{\bar{\gamma}_2} |u|^{s/\bar{s}-1}|\nabla u|\|_{L^1(K\cap B_2)}^{\bar{a}}\||x|^{\gamma_3}u \|_{L^{q}(K\cap B_1)}^{(1-\bar{a})q/\bar{q}}\\
     & \le  C\||x|^{\gamma_1}u\|^{s/\bar{s}}_{L^{s}(K\cap B_2\setminus B_1)}+C\||x|^{\bar{\gamma}_2-\gamma_2} |u|^{s/\bar{s}-1}\|^{\bar{a}}_{L^{p'}(K\cap B_2)}\||x|^{\gamma_2}\nabla u\|_{L^p(K\cap B_2)}^{\bar{a}} 
     \||x|^{\gamma_3}u \|_{L^{q}(K\cap B_1)}^{(1-\bar{a})s/\bar{s}}\\
     &\le C  \||x|^{\gamma_1}u\|^{s/\bar{s}}_{L^{s}(K\cap B_2\setminus B_1)}+C\| |x|^{\gamma_1}u\|^{\bar{a}s/p'}_{L^{s}(K\cap B_2)}\||x|^{\gamma_2}\nabla u\|_{L^p(K\cap B_2)}^{\bar{a}}\||x|^{\gamma_3}u \|_{L^{q}(K\cap B_1)}^{(1-\bar{a})s/\bar{s}}\\
     & \le  C  \||x|^{\gamma_1}u\|^{s/\bar{s}}_{L^{s}(K\cap B_2\setminus B_1)}+ \displaystyle \frac{1}{2}\||x|^{\gamma_1}u\|_{L^s(K\cap B_{1})}^{s/\bar{s}}
      +C\Big(\||x|^{\gamma_2}\nabla u\|_{L^p(K\cap B_2)}^{\bar{a}}\\
      &\quad\cdot \||x|^{\gamma_3}u \|_{L^{q}(K\cap B_1)}^{(1-\bar{a})s/\bar{s}}\Big)^{1/(1-\bar{a}\bar{s}/p')}. 
     \end{split}
  \end{equation*}
  Inequality (\ref{eqQ2_1}) follows from the above and the definitions of $\bar{a}$ and $\bar{s}$. 
 Lemma \ref{lemQ_1} is proved.  
 \end{proof}
 
 \bigskip

\noindent\emph{Proof of Theorem \ref{thmQ_2}.}
Without loss of generality, we assume $u\ge 0$.  By (\ref{eqNCB_3}) and scaling, we may assume $\supp u\subset B_1$. 

For $a=0$, we deduce from (\ref{eqNCB_3}), (\ref{eqNCB_4}) and (\ref{eqNCB_5}) that $\gamma_1=\gamma_3$ and $s=q$, thus (\ref{eqQ2_1}) is obvious.  
In the rest of the proof we assume $0<a\le 1$.  

\bigskip

\noindent\emph{Case 1.} $\displaystyle 1/s\le a/p+(1-a)/q$. 

\medskip

 In this case, inequality (\ref{eqQ2_1}) follows from Lemma \ref{lemQ_1} with $r_1=1$ and $r_2=2$.

\bigskip

\noindent\emph{Case 2.} $\displaystyle 1/s> a/p+(1-a)/q$.

\medskip

Case 2 can be reduced to Case 1 by section (V) in \cite{CKN} - this reduction is the same for $q>0$ even though $q\ge 1$ was assumed in the paper. For reader's convenience, we include such an argument here.  

By (\ref{eqNCB_3}) and (\ref{eqNCB_5}),  $1/p+(\gamma_2-1)/n\ne 1/q+\gamma_3/n$. Thus there exist some positive constants $\lambda_1$ and $\lambda_2$, such that $\hat{u}(x)=\lambda_1u(\lambda_2 x)$ satisfies $\||x|^{\gamma_2}\nabla \hat{u}\|_{L^p(K)}=1$ and $\||x|^{\gamma_3}\hat{u}\|_{L^{q}(K)}=1$.  We claim that there exist some  $0\le a', a''\le 1$, such that  
 \begin{equation}\label{eqthmQ_2_1}
\begin{split}
   \||x|^{\gamma_1}\hat{u}\|_{L^s(K)} & \le C\left(\||x|^{\gamma_2}\nabla u\|_{L^p(K)}^{a'}\||x|^{\gamma_3}u\|_{L^q(K)}^{1-a'}+\||x|^{\gamma_2}\nabla u\|_{L^p(K)}^{a''}\||x|^{\gamma_3}u\|_{L^q(K)}^{1-a''}\right)\\
   & =2C\||x|^{\gamma_2}\nabla \hat{u}\|_{L^p(K)}^a\||x|^{\gamma_3}\hat{u}\|_{L^{q}(K)}^{1-a}. 
   \end{split}
\end{equation}
Then by scaling, we have that (\ref{eqQ2_1}) holds for $u$. 

  To see (\ref{eqthmQ_2_1}) when $n\ge 2$, 
     notice that by (\ref{eqNCA_1}), (\ref{eqNCA_2})-(\ref{eqNCB_5}), it can be directly verified that $s, p, q, a, \gamma_1, \gamma_2$ and $\gamma_3$ satisfy (\ref{eqNCA_1})-(\ref{eqNCA_7}) with $\alpha=\mu=\beta=0$.   Then (\ref{eqthmQ_2_1}) follows from  Lemma \ref{lemPre_3} with $\alpha=\mu=\beta=0$. 
If $n=1$, we can obtain (\ref{eqthmQ_2_1}) by the same proof as that of Lemma \ref{lemPre_3}, where we set $\alpha=\mu=\beta=0$ and choose $\alpha'=\alpha''=0$ there. 
  Theorem \ref{thmQ_2}  is proved. 
 \qed

   \bigskip
   
 \noindent  \emph{Proof of Theorem \ref{thmD_2}.}
The necessity part has been proved in Section \ref{sec_2}. 
  The sufficiency part follows from Theorem \ref{thmQ_2} with $K=\mathbb{R}^n$.  
  \qed


\section{Proof of the sufficiency part of Theorem \ref{thm_main}}\label{sec_5}

In this section, we prove the sufficiency part of Theorem \ref{thm_main}.

We first prove  the sufficiency part of Theorem \ref{thm_main} when $1/s\le a/p+(1-a)/q$.  We make use of  Theorem \ref{thmD_2} (rather, its variants Theorem \ref{thmQ_2} and Lemma \ref{lemQ_1})  and Theorem \ref{thm1-new}. 

\medskip

For $\delta, h>0$, denote  $B'_{\delta}=\{x'\in\mathbb{R}^{n-1}\mid |x'|\le \delta\}$,  $D_{\delta}^h=B'_{\delta}\times[0, h]$ and $D_{\delta}=D_{\delta}^1$. 

\begin{lem}\label{lemQ_3}
    Let $n\ge 2$, $0<\delta_1<\delta_2<\infty$, $h>0$,    $s, p, q, a, \alpha, \mu$ and $\beta$ satisfy (\ref{eqNCA_1})-(\ref{eqNCA_7}) with $\gamma_1=\gamma_2=\gamma_3=0$. 
   Then there exists some positive constant $C$, depending only on $s, p, q, a, \alpha, \mu, \beta, \sigma$, $\delta_1, \delta_2$ and $h$,
    such that for all $u\in C^{0, 1}(D_{\delta_2}^h)$
       \begin{equation}\label{eqQ_3_1}
      \||x'|^{\alpha}u\|_{L^s(D_{\delta_1}^h)}\le C\||x'|^{\alpha}u\|_{L^s(D_{\delta_2}^h\setminus D_{\delta_1}^h)}+C\||x'|^{\mu}\nabla u\|_{L^p(D_{\delta_2}^h)}^a\||x'|^{\beta}u\|_{L^{q}(D_{\delta_2}^h)}^{1-a}. 
   \end{equation}
\end{lem}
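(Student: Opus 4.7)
The plan is to mirror the proof of Lemma \ref{lemQ_1} exactly, replacing the dyadic shells in $|x|$ by dyadic shells in $|x'|$ (since $\gamma_1=\gamma_2=\gamma_3=0$ and only the transverse weights matter). Concretely, I would set
\[
   R_k := \{x \in D_{\delta_2}^h : \delta_2 2^{-k-1} \le |x'| \le \delta_2 2^{-k}\}, \quad k \ge 0,
\]
so that on $R_k$ we have $|x'| \simeq \rho_k := \delta_2 2^{-k}$, and the weights $|x'|^\alpha, |x'|^\mu, |x'|^\beta$ can be pulled out as $\rho_k^\alpha, \rho_k^\mu, \rho_k^\beta$ up to bounded factors. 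Unlike Lemma \ref{lemQ_1}, the scaling happens only in the $(n-1)$ transverse directions, so the relevant volume ratio is $|R_k|/|R_{k-1}| \simeq 2^{-(n-1)}$.

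First, I would reduce $p > 1$ to $p = 1$ by the interpolation trick from Step 3 of Lemma \ref{lemQ_1}: choose $\bar{s}, \bar{q}, \bar{a}, \bar{\alpha}, \bar{\mu}, \bar{\beta}$ via $1/\bar{s} = 1/s + 1/p'$, $\bar{p}=1$, $1/\bar{q} = s/(\bar{s}q)$, $\bar{\alpha} = \alpha s/\bar{s}$, $\bar{\beta} = \beta s/\bar{s}$, $\bar{\mu} = \alpha s/p' + \mu$, and apply the $p=1$ case to $|u|^{s/\bar{s}}$. H\"{o}lder's inequality converts $|u|^{s/\bar{s}-1}|\nabla u|$ into the product of an $L^s$-norm and an $L^p$-norm of the gradient, and Young's inequality absorbs the resulting $\||x'|^\alpha u\|_{L^s(D_{\delta_1}^h)}^{\bar a s/p'}$ back into the left-hand side (exactly as at the end of Step 3 of Lemma \ref{lemQ_1}). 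The analog of Lemma \ref{lemPre2_1} (to verify $\bar s, \bar p, \bar q, \ldots$ satisfy the hypotheses of the $p=1$ case on cylinders) is a routine computation.

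Second, for $p = 1$, I would derive a recursion of the shape
\[
   A_k \le \theta A_{k-1} + C(M_k + M_{k-1})^{as}\,N_k^{(1-a)s/q}, \qquad \theta<1,
\]
where $A_k = \||x'|^\alpha u\|_{L^s(R_k)}^s$, $M_k = \||x'|^\mu\nabla u\|_{L^1(R_k)}$, $N_k = \||x'|^\beta u\|_{L^q(R_k)}^q$. Starting from the pointwise identity
\[
   |u(x)|^s \le (1+\epsilon)|u(x)|^{(1-a)s}|u(\xi)|^{as}
   + C|u(x)|^{(1-a)s}\bigl(|u(x)-\bar u|^{as}+|\bar u-u(\xi)|^{as}\bigr),
\]
for $x \in R_k, \xi \in R_{k-1}, \bar u = \avint_{R_{k-1}} u$, I take $\avint_{R_{k-1}}\int_{R_k}\,dx\,d\xi$ and apply H\"{o}lder. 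The first term yields $\theta \simeq (1+\epsilon)(|R_k|/|R_{k-1}|)^a \cdot (\rho_{k-1}/\rho_k)^{\alpha s - a\mu s - (1-a)\beta s/q\cdot q} \cdot \ldots$; after collecting the powers of $2^{-(n-1)a}$ and the weight ratios using (\ref{eqNCA_6_3}) (which is precisely the $(n-1)$-dim balance that makes $\theta<1$ possible) and $1/s \le a/p + (1-a)/q$ (the Case 1 hypothesis in which this lemma is used), I choose $\epsilon$ small enough to guarantee $\theta<1$. The Poincar\'{e}/Sobolev terms are bounded via the $(n-1)$-dimensional CKN on shells, which holds by (\ref{eqNCA_2}) (and (\ref{eqNCA_6_3})); when $q<1$ one uses Corollary \ref{cor_new} in place of the classical Poincar\'{e}.

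Finally, summing the recursion over $k\ge 1$ and using the combinatorial inequality $\sum x_n^c y_n^d \le (\sum x_n)^c(\sum y_n)^d$ (valid when $c+d\ge 1$, $c,d\ge 0$, which is exactly (\ref{eqD_C})) yields
\[
  \sum_{k\ge 1}A_k \le \frac{\theta}{1-\theta}A_0 + C\bigl(\sum M_k\bigr)^{as}\bigl(\sum N_k\bigr)^{(1-a)s/q},
\]
which is the conclusion (\ref{eqQ_3_1}), since $A_0$ is controlled by $\||x'|^\alpha u\|_{L^s(D_{\delta_2}^h\setminus D_{\delta_1}^h)}^s$. The principal obstacle will be the bookkeeping in Step 2: keeping track of the geometric series where the base $\theta$ involves simultaneously the volume factor $2^{-(n-1)a}$, the weight ratio $2^{\alpha s}$, and the anisotropic exponents $\mu,\beta$, and confirming that (\ref{eqNCA_6_3}) (the $(n-1)$-dimensional balance) is exactly what is needed for $\theta<1$. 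Once this is established, the rest is mechanical parallel to Lemma \ref{lemQ_1}.
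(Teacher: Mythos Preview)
Your plan has a genuine gap at the Poincar\'e--Sobolev step on the cylindrical shells. The shells
\[
   R_k=\{x:\ \rho_{k+1}\le |x'|\le \rho_k\}\times[0,h],\qquad \rho_k=\delta_2\,2^{-k},
\]
are \emph{not} related to each other by a full dilation of $\mathbb{R}^n$: only the transverse variables scale, while the height $h$ stays fixed. Consequently the Sobolev--Poincar\'e inequality $\|u-\bar u\|_{L^t(R_k\cup R_{k-1})}\le C\|\nabla u\|_{L^1(R_k\cup R_{k-1})}$ does \emph{not} hold with a $k$-uniform constant when $t>1$. Rescaling $y'=x'/\rho_k$ (keeping $x_n$) sends $R_k$ to a fixed domain but converts $\partial_{x_n}$ into $\partial_{y_n}$ with no gain, so one finds
\[
  \|u-\bar u\|_{L^t(R_k)}\lesssim \rho_k^{(n-1)/t-(n-2)}\|\nabla' u\|_{L^1(R_k)}+\rho_k^{(n-1)/t-(n-1)}\|\partial_{x_n}u\|_{L^1(R_k)}.
\]
When you insert this into your $I_1$-estimate and compare with the exponent actually required (namely $\rho_k^{\,n/t-(n-1)}$, which is what condition (\ref{eqNCA_5}) with $\gamma_i=0$ forces), the $\partial_{x_n}$-term is off by exactly $\rho_k^{-1/t}$. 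After raising to the $as$ power this becomes $\rho_k^{-as/t}=\rho_k^{-(1-(1-a)s/q)}$, which blows up as $k\to\infty$ unless $(1-a)s/q=1$. So the error term in your recursion $A_k\le\theta A_{k-1}+C(\cdots)$ carries a constant $C=C(k)\to\infty$, and the summation argument collapses. The balance you expect from (\ref{eqNCA_6_3}) governs only the transverse directions; it cannot repair the anisotropy between $\nabla'$ and $\partial_{x_n}$ on long thin cylinders.

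The paper's proof avoids this by \emph{not} working shell-by-shell in $n$ dimensions. Instead it slices at each fixed $x_n$ and applies Lemma~\ref{lemQ_1} in dimension $n-1$ to $v(\cdot,x_n)$ (with $v=u-(\avint u^m)^{1/m}$), which is legitimate because each slice is an $(n-1)$-dimensional ball and Lemma~\ref{lemQ_1} is scale-invariant there. The price is that the slicewise estimate comes with an intermediate exponent $\hat q\ne q$; the paper introduces auxiliary parameters $b,\lambda$ (see (\ref{eqPre2_2_b}), (\ref{eqPre2_2_0})) precisely so that a H\"older splitting produces a factor $\||x'|^{\mu}v(\cdot,x_n)\|_{L^1(B_1')}^{a(1-b)}$, which is then controlled \emph{in the $x_n$-direction} by the fundamental theorem of calculus (estimate (\ref{eqQ_3_3_1})). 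The nonlinear Poincar\'e (Theorem~\ref{thm1-new}) is used only once, on the fixed annulus $D_2\setminus D_1$, not on the degenerating shells. Your reduction from $p>1$ to $p=1$ is correct and matches the paper's Step~2; it is the $p=1$ core argument that needs to be replaced by the slicing strategy.
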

\begin{proof}
 Since $\gamma_1=\gamma_2=\gamma_3=0$, we deduce from (\ref{eqNCA_5}) and (\ref{eqNCA_6_3}) that 
 $1/s-a/p-(1-a)/q\ge (a(\mu-1)+(1-a)\beta-\alpha)/(n-1)=\frac{n}{n-1}(1/s-a/p-(1-a)/q)$, i.e. 
     \begin{equation}\label{eqQ_3_1_0}
 \frac{1}{s}\le \frac{a}{p}+\frac{1-a}{q}.  
\end{equation}

Let $x=(r', \theta', x_n)$ be the cylindrical coordinates where $r'=|x'|$ and $\theta'=x'/|x'|$. 
For simplicity, we only prove the lemma when $h=1$, $\delta_1=1$ and $\delta_2=2$. 
The general case can be proved similarly. 

 If $a=0$, by (\ref{eqNCA_5}), (\ref{eqNCA_6_2}) and (\ref{eqNCA_7}), we have $s=q$ and $\alpha=\beta$, and therefore (\ref{eqQ_3_1}) is obvious. In the rest of proof we assume $0<a\le 1$.

 \bigskip

\noindent\textbf{Step 1.}   We  prove inequality (\ref{eqQ_3_1}) when $p=1$.  

\medskip

By (\ref{eqQ_3_1_0}), we have, in view of $p=1$ and $a\le 1$, that 
     \begin{equation}\label{eqPre2_8}
      \frac{1}{s}-\frac{1-a}{q}\le a\le 1.
     \end{equation}
     
     \bigskip
    
  \noindent  \emph{Case 1.}  $1/s-(1-a)/q=1$.
    
    \medskip

By (\ref{eqPre2_8}), we have $a=1$ and $s=1$. Because of this, (\ref{eqNCA_5}), and the fact that $s=p=1$ and $\gamma_1=\gamma_2=\gamma_3=0$, we have $\alpha=\mu-1$. 
   Let 
      \begin{equation*}
         \hat{s}=1,\quad  \hat{p}=1, \quad \hat{q}=q, \quad  \hat{a}=1, \quad \hat{\gamma}_1=\alpha, \quad \hat{\gamma}_2=\alpha+1, \quad \hat{\gamma}_3=0. 
               \end{equation*}
      It is easy to verify that $\hat{s}, \hat{p}, \hat{q}, \hat{a}, \hat{\gamma}_1, \hat{\gamma}_2, \hat{\gamma}_3$ satisfy (\ref{eqNCA_1}), (\ref{eqNCB_2})-(\ref{eqNCB_5}) and       $1/\hat{s}\le \hat{a}/\hat{p}+(1-\hat{a})/\hat{q}$. Apply Lemma \ref{lemQ_1} to $u(\cdot, x_n)$ for each fixed $0\le x_n\le 1$, with $K=\mathbb{R}^{n-1}$ and $s, p, q, a, \gamma_1, \gamma_2, \gamma_3$ replaced by $\hat{s}, \hat{p}, \hat{q}, \hat{a}, \hat{\gamma}_1, \hat{\gamma}_2, \hat{\gamma}_3$, we have, with notation $\nabla'=\nabla_{x'}$, that
  \[
   \int_{B'_1}|x'|^{\alpha}|u(x', x_n)|dx'\le C\int_{B_{2}'\setminus B'_1}|x'|^{\alpha}|u(x', x_n)|dx'+C\int_{B_{2}'}|x'|^{\alpha+1}|\nabla' u(x', x_n)|dx'. 
      \]
           Integrate the above in $x_n$ on $[0, 1]$, we have (\ref{eqQ_3_1}) in this case, i.e.        \begin{equation*}
       \||x'|^{\alpha}u\|_{L^1(D_1)}\le C \||x'|^{\alpha}u\|_{L^1(D_{2}\setminus D_1)}+C\||x'|^{\alpha+1}\nabla'u\|_{L^1(D_{2})}.
    \end{equation*}

  \bigskip
\noindent\emph{Case 2.} $1/s-(1-a)/q < 1$. 

\medskip

Let
\begin{equation}\label{eqPre2_2_b}
  b=\frac{1}{a}\Big(\frac{1}{s}-\frac{1-a}{q}\Big), \quad \lambda=\frac{a(1-b)}{1-ab}. 
  \end{equation}
 Since $a>0$,  $b$ is well defined. In the definition of $\lambda$ above,  we have used the assumption that $ab=1/s-(1-a)/q<1$. 
   By (\ref{eqQ_3_1_0}) with $p=1$, we have $b\le 1$. By  (\ref{eqNCA_5}) and (\ref{eqNCA_6_2}), we have $1/s-(a(1/p-1/n)+(1-a)/q)=(a(\gamma_2+\mu)+(1-a)(\gamma_3+\beta)-(\gamma_1+\alpha))/n\ge 0$. 
Thus when $p=1$, we have $b=(1/s-(1-a)/q)/a\ge (n-1)/n$. So $(n-1)/n\le b\le 1$. Consequently, we have $0\le \lambda\le 1$ in view of  $0<a\le 1$.

  Let 
 \begin{equation}\label{eqPre2_2_0}
      \begin{split}
         & 
         \hat{a}=ab, \quad \hat{s}=s, \quad \hat{p}=1, \quad 
         \frac{1}{\hat{q}}=\lambda+\frac{1-\lambda}{q}, \\
         & \hat{\gamma}_1=\alpha, \quad \hat{\gamma}_2=\mu, \quad \hat{\gamma}_3=\lambda\mu+(1-\lambda)\beta. 
        \end{split}
      \end{equation}
   We have shown that $0<\hat{a}<1$. 
   Using (\ref{eqNCA_1})-(\ref{eqNCA_7}) and the assumption that  $1/s-(1-a)/q < 1$, 
   it can be verified that  $\hat{s}, \hat{p}, \hat{q}, \hat{a}, \hat{\gamma}_1, \hat{\gamma}_2, \hat{\gamma}_3$ satisfy (\ref{eqNCA_1}), (\ref{eqNCB_2})-(\ref{eqNCB_5}) with $\hat{p}=1$, $1/\hat{s}\le \hat{a}/\hat{p}+(1-\hat{a})/\hat{q}$ and with $n$ replaced by $n-1$. For the details of the verification, see Lemma \ref{lemPre2_2} 
   and its proof.

          Let $m=\min\{1, q, s\}$ and   $1<\delta<2$ be some fixed number,  set 
           \[
       v:=u-\big(\avint_{D_{2}\setminus D_1}u^m\big)^{1/m}. 
            \]
     Apply Lemma \ref{lemQ_1} with $p=1$
     to $v(\cdot, x_n)$ for each fixed $0\le x_n\le 1$, with $r_1=1, r_2=2$, 
     and $s, p, q, a, \gamma_1, \gamma_2, \gamma_3$ replaced by $\hat{s}, \hat{p}, \hat{q}, \hat{a}, \hat{\gamma}_1, \hat{\gamma}_2, \hat{\gamma}_3$,  
     we have 
       \begin{equation}\label{eqQ_3_2}
       \begin{split}
       & \quad \||x'|^{\alpha}v(\cdot, x_n)\|_{L^s(B_1')}\\
       & \le C\||x'|^{\alpha}v(\cdot, x_n)\|_{L^s(B_2'\setminus B_1')}+C\||x'|^{\mu}\nabla'v(\cdot, x_n)\|_{L^1(B_2')}^{ab}\||x'|^{\hat{\gamma}_3}v(\cdot, x_n)\|_{L^{\hat{q}}(B_1')}^{1-ab}. 
       \end{split}
      \end{equation}
    Using the definition of $\hat{\gamma}_3$, $\hat{q}$ and $\lambda$ in (\ref{eqPre2_2_b}) and (\ref{eqPre2_2_0}),  
    and the fact that $0\le \lambda\le 1$, 
  we apply H\"older's inequality to estimate the last term in (\ref{eqQ_3_2}) as follows. 
  \begin{equation}\label{eqQ_3_3}
    \begin{split}
    \||x'|^{\hat{\gamma}_3}v(\cdot, x_n)\|_{L^{\hat{q}}(B_1')}^{1-ab} & =\|||x'|^{\mu}v(\cdot, x_n)|^{\lambda}\cdot ||x'|^{\beta}v(\cdot, x_n)|^{1-\lambda}\|_{L^{\hat{q}}(B_1')}^{1-ab}\\
    &   \le \||x'|^{\mu}v(\cdot, x_n)\|_{L^1(B_1')}^{\lambda(1-ab)}\||x'|^{\beta}v(\cdot, x_n)\|_{L^q(B_1')}^{(1-\lambda)(1-ab)} \\
    &   \le \||x'|^{\mu}v(\cdot, x_n)\|_{L^1(B_1')}^{a(1-b)}\||x'|^{\beta}v(\cdot, x_n)\|_{L^q(B_1')}^{(1-a)}. 
     \end{split}
\end{equation}
Next, we estimate the term $\int_{B_{1}'}|x'|^{\mu}|v|dx'$ in the above. 
Notice  that 
  \[
    |v(x',x_n)|\le C\int_{0}^{1}|v_{x_n}(x', t)|dt+C\int_{0}^{1}|v(x', t)|dt, \quad \forall\ (x', x_n)\in D_2. 
  \]
 So, for each $x_n\in [0, 1]$, 
   we have
   \begin{equation}\label{eqQ_3_4}
    \int_{B_{1}'}|x'|^{\mu}|v(x',x_n)|dx'   \le C\int_{B_{1}'}\int_{0}^{1}|x'|^{\mu}|v_{x_n}(x', t)|dtdx'+C\int_{B_{1}'}\int_{0}^{1}|x'|^{\mu}|v(x', t)|dtdx'. 
  \end{equation}
  Applying Lemma \ref{lemQ_1} in dimension $n-1$, we have, for every $x_n$ in $[0, 1]$, that
  \[
     \int_{B_1'}|x'|^{\mu}|v(x', x_n)|dx'\le C\int_{B_2'\setminus B_1'}|x'|^{\mu}|v(x', x_n)|dx'+C\int_{B_2'}|x'|^{\mu+1}|\nabla' v(x', x_n)|dx'.  
       \]
  Integrating the above in $x_n$ over $[0, 1]$, and then inserting it into (\ref{eqQ_3_4}),  we have
  \begin{equation}\label{eqQ_3_3_1}
     \int_{B_{1}'}|x'|^{\mu}|v(x',x_n)|dx' \le C\big( \||x'|^{\mu}v\|_{L^1(D_{2}\setminus D_{1})}+\||x'|^{\mu}\nabla v\|_{L^1(D_{2})}\big). 
  \end{equation}

Putting (\ref{eqQ_3_2}), (\ref{eqQ_3_3}) and (\ref{eqQ_3_3_1}) together, we have  
 \[
  \begin{split}
   \||x'|^{\alpha}v(\cdot,x_n)\|^s_{L^s(B'_{1})}  & \le  C\||x'|^{\alpha}v(\cdot,x_n)\|^s_{L^s(B'_2\setminus B'_{1})}
       + C\||x'|^{\mu}  \nabla ' v(\cdot,x_n)\|_{L^1(B_{2}')}^{abs}  \\
        & \cdot\||x'|^{\beta}v(\cdot,x_n)\|_{L^q(B_{1}')}^{(1-a)s} \left(\||x'|^{\mu}\nabla v\|_{L^1(D_{2})}+\| |x'|^{\mu}v\|_{L^1(D_{2}\setminus D_{1})}\right)^{a(1-b)s}.
      \end{split}
  \]
Integrating the above in $x_n$ over $[0, 1]$, applying H\"older's inequality, and followed by Young's inequality,  we have, using   $abs+(1-a)s/q=1$, that    \[
   \begin{split}
           \||x'|^{\alpha}v\|^s_{L^s(D_1)}
            & \le  C\||x'|^{\alpha}v\|^s_{L^s(D_2\setminus D_{1})}+C\||x'|^{\mu}  \nabla ' v\|_{L^1(D_2)}^{abs} \||x'|^{\beta}v\|_{L^q(D_{1})}^{(1-a)s}
           \big(\||x'|^{\mu}\nabla v\|_{L^1(D_{2})}\\
           & \quad +\| |x'|^{\mu}v\|_{L^1(D_{2}\setminus D_{1})}\big)^{a(1-b)s}\\
           & \le C\||x'|^{\alpha}v\|^s_{L^s(D_2\setminus D_{1})}+C \big(\||x'|^{\mu}\nabla v\|_{L^1(D_{2})}+\||x'|^{\mu}  \nabla ' v\|_{L^1(D_2)}^b\\
           &\quad \cdot\| |x'|^{\mu}v\|^{1-b}_{L^1(D_{2}\setminus D_{1})}\big)^{as} 
             \||x'|^{\beta}v\|_{L^q(D_{1})}^{(1-a)s} \\
                                                                                        & \le C\||x'|^{\alpha}v\|^s_{L^s(D_2\setminus D_{1})}+C    \big(\||x'|^{\mu}\nabla v\|_{L^1(D_{2})} + \||x'|^{\mu} v\|_{L^1(D_{2}\setminus D_{1})}\big)^{as} \\
                                                                                        & \quad \cdot\||x'|^{\beta} v\|^{(1-a)s}_{L^{q}(D_{1})}.                                                                                             \end{split}
  \]
  By the definition of $v$ and the above, using $m\le s, q$, we have 
   \begin{equation}\label{eqQ_3_5}
   \begin{split}
   & \quad   \||x'|^{\alpha}u\|_{L^s(D_1)}\\
   & \le C\||x'|^{\alpha}u\|_{L^s(D_{2}\setminus D_{1})}+     C    \big(\||x'|^{\mu}\nabla u\|^a_{L^1(D_{2})}+ \| |x'|^{\mu}v\|^a_{L^1(D_{2}\setminus D_{1})}\big) \||x'|^{\beta} u\|^{1-a}_{L^{q}(D_{2})}. 
     \end{split}
  \end{equation}
  Since  $m\le 1$ and $1\le |x'|\le 2$ in $D_2\setminus D_1$, we apply 
  Theorem \ref{thm1-new} to obtain 
  \begin{equation}\label{eqQ_3_7}
      \| |x'|^{\mu}v\|_{L^1(D_{2}\setminus D_{1})}\le C\|v\|_{L^1(D_{2}\setminus D_{1})}\le C \|\nabla u\|_{L^1(D_{2}\setminus D_{1})}\le C \| |x'|^{\mu}\nabla u\|_{L^1(D_{2}\setminus D_{1})}.   \end{equation}
  By (\ref{eqQ_3_5}) and (\ref{eqQ_3_7}), we have 
  \[
     \||x'|^{\alpha}u\|_{L^s(D_1)}\le C\||x'|^{\alpha} u\|_{L^s(D_{2}\setminus D_{1})}+C\||x'|^{\mu}\nabla u\|_{L^1(D_{2})}^{a}\||x'|^{\beta} u\|_{L^{q}(D_{2})}^{1-a}. 
  \]
The lemma is proved for $p=1$.

\bigskip

\noindent\textbf{Step 2.} We prove inequality (\ref{eqQ_3_1}) when $p>1$. 

\medskip

Let $\bar{s}, \bar{p}, \bar{q}, \bar{a}, \bar{\alpha}, \bar{\mu}$ and $\bar{\beta}$ be defined by  
  \begin{equation*}
    \begin{split}
    &   \frac{1}{\bar{s}}=\frac{1}{s}+\frac{1}{p'}, \quad \bar{p}=1, \quad \frac{1}{\bar{q}}=\frac{s}{\bar{s}q},  
    \quad \bar{a}=\frac{as}{(1-a)\bar{s}+as}, \\
    & \bar{\alpha}=\frac{\alpha s}{\bar{s}}, \quad 
     \bar{\mu}=\frac{\alpha s}{p'}+\mu, \quad \bar{\beta}=\frac{\beta s}{\bar{s}}, 
          \end{split}
   \end{equation*} 
   where $1/p+1/p'=1$.  
   It can be verified that $0<\bar{s}< s$, and $\bar{s}, \bar{p}, \bar{q}, \bar{a}, \bar{\alpha}, \bar{\mu}, \bar{\beta}$ satisfy (\ref{eqNCA_1})-(\ref{eqNCA_7}) with $\gamma_1=\gamma_2=\gamma_3=0$ and $s, p, q, a, \alpha, \mu, \beta$ replaced by $\bar{s}, \bar{p}, \bar{q}, \bar{a}, \bar{\alpha}, \bar{\mu}, \bar{\beta}$ respectively. For the details of the verification, see Lemma \ref{lemPre2_1} and its proof. 
    For $u\in C^{0, 1}(D_2)$, 
    we have $|u|^{s/\bar{s}}\in C^{0, 1}(D_{2})$. 
    Apply (\ref{eqQ_3_1}) with $p=1$  to $|u|^{s/\bar{s}}$, we have,  using  H\"{o}lder's inequality and Young's inequality, that 
    \begin{equation*}
    \begin{split}
    &\quad   \||x'|^{\alpha}u\|_{L^s(D_1)}^{s/\bar{s}} 
     = \||x'|^{\bar{\alpha}}|u|^{s/\bar{s}}\|_{L^{\bar{s}}(D_1)}\\
     & \le C\||x'|^{\bar{\alpha}}|u|^{s/\bar{s}}\|_{L^{\bar{s}}(D_{2}\setminus D_1)}+C\||x'|^{\bar{\mu}}\nabla |u|^{s/\bar{s}}\|_{L^1(D_{2})}^{\bar{a}}\||x'|^{\bar{\beta}} |u|^{s/\bar{s}}\|_{L^{\bar{q}}(D_{2})}^{1-\bar{a}}\\
     & =  C\||x'|^{\alpha} u\|_{L^{s}(D_{2}\setminus D_1)}^{s/\bar{s}}+C\||x'|^{\bar{\mu}-\mu} |u|^{s/\bar{s}-1}\cdot |x'|^{\mu}|\nabla u|\|_{L^1(D_{2})}^{\bar{a}}\||x'|^{\beta}u \|_{L^{q}(D_{2})}^{(1-\bar{a})q/\bar{q}}\\
          &\le C\||x'|^{\alpha} u\|_{L^{s}(D_{2}\setminus D_1)}^{s/\bar{s}}+C\| |x'|^{\alpha}u\|^{\bar{a}s/p'}_{L^{s}(D_{2})}\||x'|^{\mu}\nabla u\|_{L^p(D_{2})}^{\bar{a}}\||x'|^{\beta}u \|_{L^{q}(D_{2})}^{(1-\bar{a})s/\bar{s}}\\
     & \le C\||x'|^{\alpha} u\|_{L^{s}(D_{2}\setminus D_1)}^{s/\bar{s}}+\frac{1}{2}\| |x'|^{\alpha}u\|^{s/\bar{s}}_{L^s(D_{2})}+C\Big(\||x'|^{\mu}\nabla u\|_{L^p(D_{2})}^{\bar{a}}\||x'|^{\beta}u \|_{L^{q}(D_{2})}^{(1-\bar{a})s/\bar{s}}\Big)^{1/(1-\bar{a}\bar{s}/p')}.
          \end{split}
  \end{equation*}
 Inequality (\ref{eqQ_3_1}) follows from the above and the definitions of $\bar{a}$ and $\bar{s}$.  Lemma \ref{lemQ_3} is proved. 
  \end{proof}
 
\begin{rmk}
In the proof of Lemma \ref{lemQ_3}, when $a=1$ or when $0<a<1$ and $1/s+1-1/p\le q/s$, we can use Theorem A and the classical Poincar\'{e}'s inequality instead of Theorem \ref{thmD_2} and  \ref{thm1-new}. 
\end{rmk}

For $0\le r_1< r_2\le \infty$ and $\epsilon>0$, let 
\begin{equation}\label{eqR_e}
K_{r_1, r_2, \epsilon}:=\{x\in\mathbb{R}^n\mid r_1\le |x|< r_2,\ \  |x'|\le \epsilon |x|\}. 
\end{equation}
\begin{lem}\label{lemQ_4}
     Let $n\ge 2$,  $0\le r_1< r_2\le \infty$, $0<\epsilon_1<\epsilon_2\le 1$, $K_{\epsilon_i}:=K_{r_1, r_2, \epsilon_i}$, $i=1, 2$, and let $s, p, q, a, \gamma_1, \gamma_2, \gamma_3, \alpha, \mu$  and $\beta$ be real numbers satisfying (\ref{eqNCA_1})-(\ref{eqNCA_7}) with 
      $1/s\le a/p+(1-a)/q$. Then there exists some positive constant $C$, depending only on $s, p, q, a, \gamma_1, \gamma_2,  \gamma_3, \alpha, \mu, \beta, \epsilon_1, \epsilon_2,  r_1$ and $r_2$, 
  such that for all $u\in C^{1}(\bar{K}_{\epsilon_2})$,  
  \begin{equation}\label{eqQ_4_1}
  \||x|^{\gamma_1}|x'|^{\alpha}u\|_{L^s(K_{\epsilon_1})}  \le C\||x|^{\gamma_1}|x'|^{\alpha}u\|_{L^s(K_{\epsilon_2}\setminus K_{\epsilon_1})}+C\||x|^{\gamma_2}|x'|^{\mu}\nabla u\|_{L^p(K_{\epsilon_2})}^{a}\||x|^{\gamma_3}|x'|^{\beta}u\|_{L^q(K_{\epsilon_2})}^{1-a}.
\end{equation}
Furthermore, on any compact set in the parameter space in which (\ref{eqNCA_1})-(\ref{eqNCA_3}) hold, the constant $C$ is bounded. 
\end{lem}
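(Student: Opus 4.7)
Plan. The plan is to reduce the cone estimate (\ref{eqQ_4_1}) to the cylinder estimate of Lemma \ref{lemQ_3} by dyadic decomposition in $|x|$ followed by rescaling, and then to sum the resulting scale-invariant estimates with inequality (\ref{eqD_C}). By (\ref{eqNCA_5}) the dilation $u \mapsto u(\lambda\,\cdot)$ rescales all three weighted norms in (\ref{eqQ_4_1}) by the same power of $\lambda$; consequently, decomposing $K_{\epsilon_1}$ into the dyadic shells $A_k := K_{\epsilon_1} \cap \{2^k \le |x| < 2^{k+1}\}$ and setting $u_k(y) := u(2^k y)$ reduces matters to a scale-invariant estimate on the unit-scale shell $A^* := K_{\epsilon_1} \cap \{1 \le |y| < 2\}$, on which the radial weights $|y|^{\gamma_j}$ are bounded above and below.

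At unit scale I choose nested translated cylinders $D_i := B'_{\delta_i} \times [a, b]$ for $i=1,2$, with parameters satisfying $a \le \sqrt{1-\epsilon_1^2}$, $b \ge 2$, $\delta_1 \ge \epsilon_1 b/\sqrt{1-\epsilon_1^2}$, $\delta_2 \le \epsilon_2 a/\sqrt{1-\epsilon_2^2}$, and $\delta_1 < \delta_2$. These inclusions guarantee $A^* \subset D_1$, $D_2 \subset K_{\epsilon_2}$, and the cylindrical shell $D_2 \setminus D_1 \subset K_{\epsilon_2} \setminus K_{\epsilon_1}$. Such parameters exist whenever $\epsilon_2/\epsilon_1$ exceeds a geometric threshold; otherwise I insert finitely many intermediate cone angles $\epsilon_1 = \epsilon^{(0)} < \epsilon^{(1)} < \cdots < \epsilon^{(N)} = \epsilon_2$ with each consecutive ratio meeting the threshold and instead apply the lemma for the pair $(\epsilon_1, \epsilon^{(1)})$, noting that the right-hand-side norms on $K_{\epsilon^{(1)}}\setminus K_{\epsilon_1}$ and $K_{\epsilon^{(1)}}$ are dominated by those on $K_{\epsilon_2}\setminus K_{\epsilon_1}$ and $K_{\epsilon_2}$, respectively. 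Applying Lemma \ref{lemQ_3} (with $\gamma_j = 0$, and on the cylinder shifted by $a$ in $x_n$, which is permissible since the cylinder weights depend only on $x'$) to $u_k$ on the pair $(D_1, D_2)$ and rescaling back then yields, with $C$ independent of $k$,
\[
\||x|^{\gamma_1}|x'|^\alpha u\|_{L^s(A_k)} \le C\||x|^{\gamma_1}|x'|^\alpha u\|_{L^s(S_k)} + C\||x|^{\gamma_2}|x'|^\mu \nabla u\|_{L^p(T_k)}^a \||x|^{\gamma_3}|x'|^\beta u\|_{L^q(T_k)}^{1-a},
\]
where $S_k \subset K_{\epsilon_2}\setminus K_{\epsilon_1}$ and $T_k \subset K_{\epsilon_2}$ are the $2^k$-rescalings of $D_2 \setminus D_1$ and $D_2$.

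Raising to the $s$-th power, summing over $k$, and using the bounded overlap of $\{S_k\}$ and $\{T_k\}$ across dyadic scales bounds the shell contributions by $C\||x|^{\gamma_1}|x'|^\alpha u\|_{L^s(K_{\epsilon_2}\setminus K_{\epsilon_1})}^s$. For the cross-term sum $\sum_k \|\cdot\|_{L^p(T_k)}^{as} \|\cdot\|_{L^q(T_k)}^{(1-a)s}$, I invoke (\ref{eqD_C}) with $c = as/p$ and $d = (1-a)s/q$; the standing hypothesis $1/s \le a/p + (1-a)/q$ gives $c + d \ge 1$, so the sum is controlled by $C\||x|^{\gamma_2}|x'|^\mu \nabla u\|_{L^p(K_{\epsilon_2})}^{as}\||x|^{\gamma_3}|x'|^\beta u\|_{L^q(K_{\epsilon_2})}^{(1-a)s}$. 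Taking $s$-th roots gives (\ref{eqQ_4_1}), and uniformity of the constant on compact parameter subsets in which (\ref{eqNCA_1})--(\ref{eqNCA_3}) hold follows from that in Lemma \ref{lemQ_3} combined with the finite nature of both the dyadic construction and the intermediate-angle interpolation. The main obstacle will be the geometric step: arranging the cylindrical shell $D_2 \setminus D_1$ to lie inside $K_{\epsilon_2}\setminus K_{\epsilon_1}$ rather than leak back into $K_{\epsilon_1}$, which is automatic for $\epsilon_2/\epsilon_1$ large but requires the short intermediate-angle interpolation described above when $\epsilon_1, \epsilon_2$ are close (especially when both are near $1$).
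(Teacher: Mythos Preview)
Your overall architecture (dyadic decomposition, scaling by (\ref{eqNCA_5}), summation via (\ref{eqD_C})) matches the paper's Case~2, but the step ``Applying Lemma~\ref{lemQ_3} (with $\gamma_j=0$)'' is not justified as written. Lemma~\ref{lemQ_3} requires its parameters $(s,p,q,a,\alpha,\mu,\beta)$ with $\gamma_1=\gamma_2=\gamma_3=0$ to satisfy (\ref{eqNCA_1})--(\ref{eqNCA_7}); in particular the scaling identity (\ref{eqNCA_5}) with $\gamma_j=0$ reads $\tfrac{1}{s}+\tfrac{\alpha}{n}=a(\tfrac{1}{p}+\tfrac{\mu-1}{n})+(1-a)(\tfrac{1}{q}+\tfrac{\beta}{n})$, which by the original (\ref{eqNCA_5}) forces $\gamma_1=a\gamma_2+(1-a)\gamma_3$. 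Under the hypotheses of Lemma~\ref{lemQ_4} you only have the inequality (\ref{eqNCA_6_1}), so Lemma~\ref{lemQ_3} simply does not apply with the triple $(\alpha,\mu,\beta)$ you hand it. The paper fixes this in its Case~1 by replacing $\mu$ with $\tilde\mu:=\mu+\gamma_2-\gamma_1/a+(1-a)\gamma_3/a\ge\mu$, which restores (\ref{eqNCA_5}) (and the remaining conditions) with $\gamma_j=0$; then $|y'|^{\tilde\mu}\le C|y'|^{\mu}$ on the bounded cylinder closes the estimate. Your argument needs exactly this adjustment.

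There is also a secondary geometric issue you do not address: when $r_1>0$ or $r_2<\infty$, the rescaled cylinders $T_k=2^kD_2$ at the extremal dyadic indices can exit the radial range $[r_1,r_2]$ of $K_{\epsilon_2}$, so $u$ need not even be defined on all of $T_k$. The paper avoids this by treating the bounded case $0<r_1<r_2<\infty$ \emph{first and without dyadic decomposition}, via a single diffeomorphism $\Phi:K_{\epsilon_i}^+\to D_{\epsilon_i}$ with $|y'|\simeq|x'|$, and only afterwards sums dyadically when one endpoint is $0$ or $\infty$ (so the cone is scale-invariant and no radial leakage occurs). Your direct cylinder-inclusion route together with the intermediate-angle interpolation is a viable substitute for the diffeomorphism at unit scale, but you still need to separate the bounded-radial case from the unbounded one, or otherwise explain how the boundary shells are handled.
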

\begin{rmk}
  Consider more general cones $K_{r_1, r_2, \Omega}=\{rx\mid r_1\le r\le r_2,\  x\in \Omega\}$ for some open set $\Omega\subset\mathbb{S}^{n-1}$ with Lipschitz boundary. For open sets $\Omega_1\subset\bar{\Omega}_1\subset \Omega_2\subset\mathbb{S}^{n-1}$ with Lipschitz boundaries, Lemma \ref{lemQ_4} still holds with $K_{r_1, r_2, \epsilon_i}$ replaced by $K_{r_1, r_2, \Omega_i}$, $i=1, 2$. 
  \end{rmk}

\bigskip

\noindent\emph{Proof of Lemma \ref{lemQ_4}.}

  For  $\epsilon>0$, denote $K_{\epsilon}:=K_{r_1, r_2, \epsilon}$, and let $K_{\epsilon}^{+}:=K_{\epsilon}\cap\{x_n\ge 0\}$, $K_{\epsilon}^{-}:=K_{\epsilon}\cap\{x_n< 0\}$. We will only prove (\ref{eqQ_4_1}) with $K_{\epsilon_i}$, $i=1, 2$, replaced by $K_{\epsilon_i}^+$. The estimate on $K_{\epsilon_i}^{-}$ is similar.  

\bigskip

\noindent\emph{Case 1.} $0<r_1<r_2<\infty$.

\medskip

 In this case, there exists a diffeomorphism $y=\Phi(x)$ from $K_{\epsilon_i}^+$ to $D_{\epsilon_i}=B_{\epsilon_i}'\times [0, 1]$, satisfying $|y'|/C\le |x'|\le C|y'|$.  Let $\tilde{\mu}=\mu+\gamma_2-\gamma_1/a+(1-a)\gamma_3/a$. By (\ref{eqNCA_6_1}) we have $\tilde{\mu}\ge \mu$. 
  Notice we are in the case $1/s\le a/p+(1-a)/q$, it can be verified that  $s, p, q, a, \alpha, \tilde{\mu}, \beta$ satisfy (\ref{eqNCA_1})-(\ref{eqNCA_7}) with $\mu$ replaced by $\tilde{\mu}$ and $\gamma_1=\gamma_2=\gamma_3=0$. Applying Lemma \ref{lemQ_3} to $\hat{u}=u\circ \Phi^{-1}$, we have 
  \begin{equation*}
   \begin{split}
       \||y'|^{\alpha}\hat{u}\|_{L^s(D_{\epsilon_1})} & \le  C\||y'|^{\alpha}\hat{u}\|_{L^s(D_{\epsilon_2}\setminus D_{\epsilon_1})}+C\||y'|^{\tilde{\mu}}\nabla \hat{u}\|_{L^p(D_{\epsilon_2})}^a\||y'|^{\beta}\hat{u}\|_{L^{q}(D_{\epsilon_2})}^{1-a}\\
       & \le C\||y'|^{\alpha}\hat{u}\|_{L^s(D_{\epsilon_2}\setminus D_{\epsilon_1})}+C\||y'|^{\mu}\nabla \hat{u}\|_{L^p(D_{\epsilon_2})}^a\||y'|^{\beta}\hat{u}\|_{L^{q}(D_{\epsilon_2})}^{1-a}.  
       \end{split}
   \end{equation*}
   Inequality (\ref{eqQ_4_1}) follows immediately.
   
   \bigskip
   
  \noindent\emph{Case 2.} $r_1=0$ or $r_2=\infty$.
  
  \medskip
   
   Working with $u(\lambda x)$ instead of $u(x)$, we only need to treat the cases when $r_1 =0$  and $r_2=1$, or $r_1=1$ and $r_2=\infty$, or $r_1=0$ and $r_2=\infty$. 
   Let $R_k:=\{x\in\mathbb{R}^n\mid 2^{k-1}\le |x| < 2^{k}\}$, $k\in \mathbb{Z}$. By Case 1, (\ref{eqNCA_5}) and scaling, we have, for every $k\in \mathbb{Z}$, that 
    \begin{equation}\label{eqlem_in}
    \begin{split}
  \||x|^{\gamma_1}|x'|^{\alpha}u\|^s_{L^s(R_k\cap K_{\epsilon_1})} & \le C\||x|^{\gamma_1}|x'|^{\alpha}u\|^s_{L^s(R_k\cap K_{\epsilon_2}\setminus K_{\epsilon_1})}+C\||x|^{\gamma_2}|x'|^{\mu}\nabla u\|_{L^p(R_k\cap K_{\epsilon_2})}^{as}\\
  & \quad\cdot\||x|^{\gamma_3}|x'|^{\beta}u\|_{L^q(R_k\cap K_{\epsilon_2})}^{(1-a)s}. 
  \end{split}
\end{equation}
When  $r_1=0$ and $r_2=\infty$, take the sum of (\ref{eqlem_in}) over all $k\in \mathbb{Z}$, we have,  using  $as/p+\frac{(1-a)s}{q}\ge 1$ and 
 (\ref{eqD_C}), that  
\[
  \begin{split}
   & \quad  \||x|^{\gamma_1}|x'|^{\alpha}u\|^s_{L^s(K_{\epsilon_1})} \\
   & \le C\||x|^{\gamma_1}|x'|^{\alpha}u\|^s_{L^s(K_{\epsilon_2}\setminus K_{\epsilon_1})}+C\sum_{k=-\infty}^{\infty}\||x|^{\gamma_2}|x'|^{\mu}\nabla u\|_{L^p(R_k\cap K_{\epsilon_2})}^{as}\||x|^{\gamma_3}|x'|^{\beta}u\|_{L^q(R_k\cap K_{\epsilon_2})}^{(1-a)s}\\
    & \le C\||x|^{\gamma_1}|x'|^{\alpha}u\|^s_{L^s(K_{\epsilon_2}\setminus K_{\epsilon_1})}+C\||x|^{\gamma_2}|x'|^{\mu}\nabla u\|_{L^p(K_{\epsilon_2})}^{as}\||x|^{\gamma_3}|x'|^{\beta}u\|_{L^q(K_{\epsilon_2})}^{(1-a)s}.
    \end{split}
\]
So  (\ref{eqQ_4_1}) is proved when $r_1=0$ and $r_2=\infty$. 
Inequality (\ref{eqQ_4_1}) for $r_1=0$ and $r_2=1$ follows by summing (\ref{eqlem_in}) over $k\le 0$.  For $r_1=1$ and $r_2=\infty$, we sum (\ref{eqlem_in}) over $k\ge 0$. 
 Lemma \ref{lemQ_4} is proved.
\qed   

\bigskip

\noindent\emph{Proof of the sufficiency part of  Theorem \ref{thm_main} when $1/s\le a/p+(1-a)/q$.}

\medskip

Fix $\epsilon>0$ small, let $K_{\epsilon}$ be the cone defined by (\ref{eqR_e}) with $r_1=0$ and $r_2=\infty$. 
 
 By (\ref{eqNCA_1}), (\ref{eqNCA_3}), (\ref{eqNCA_5}), (\ref{eqNCA_6_2}) and (\ref{eqNCA_7}), we have that $s, p, q, a, \gamma_1+\alpha, \gamma_2+\mu, \gamma_3+\beta$ satisfy (\ref{eqNCA_1}) and (\ref{eqNCB_2})-(\ref{eqNCB_5}) with $\gamma_1, \gamma_2, \gamma_3$ replaced by $ \gamma_1+\alpha, \gamma_2+\mu, \gamma_3+\beta$ respectively. Then by Theorem \ref{thmQ_2}, we have 
 \begin{equation*}
     \||x|^{\gamma_1+\alpha}u\|_{L^s(\mathbb{R}^n\setminus K_{\epsilon})}\le C\||x|^{\gamma_2+\mu}\nabla u\|_{L^p(\mathbb{R}^n\setminus K_{\epsilon})}^{a}\||x|^{\gamma_3+\beta}u\|_{L^q(\mathbb{R}^n\setminus K_{\epsilon})}^{1-a}. 
  \end{equation*}
  Since $\epsilon |x|\le |x'|\le |x|$ for $x$ in $\mathbb{R}^n\setminus K_{\epsilon}$, we have 
  \begin{equation}\label{eqNC_out}
   \||x|^{\gamma_1}|x'|^{\alpha}u\|_{L^s(\mathbb{R}^n\setminus K_{\epsilon})}\le    C\||x|^{\gamma_2}|x'|^{\mu}\nabla u\|_{L^p(\mathbb{R}^n\setminus K_{\epsilon})}^{a}\||x|^{\gamma_3}|x'|^{\beta}u\|_{L^q(\mathbb{R}^n\setminus K_{\epsilon})}^{1-a}. 
\end{equation}
By Lemma \ref{lemQ_4}, 
\begin{equation}\label{eqin_1}
 \begin{split}
& \quad  \||x|^{\gamma_1}|x'|^{\alpha}u\|_{L^s(K_{\epsilon})} \\
&   \le C\||x|^{\gamma_1}|x'|^{\alpha}u\|_{L^s(K_{2\epsilon}\setminus K_{\epsilon})}+C\||x|^{\gamma_2}|x'|^{\mu}\nabla u\|_{L^p(K_{2\epsilon})}^{a}\||x|^{\gamma_3}|x'|^{\beta}u\|_{L^q(K_{2\epsilon})}^{1-a}. 
 \end{split}
 \end{equation}
  It follows from (\ref{eqNC_out}) and (\ref{eqin_1}) that 
   \begin{equation}\label{eqin_2}
  \||x|^{\gamma_1}|x'|^{\alpha}u\|_{L^s(\mathbb{R}^n)}\le C\||x|^{\gamma_2}|x'|^{\mu}\nabla u\|_{L^p(\mathbb{R}^n)}^{a}\||x|^{\gamma_3}|x'|^{\beta}u\|_{L^q(\mathbb{R}^n)}^{1-a}. 
 \end{equation}
 The sufficiency part of   Theorem \ref{thm_main} is proved when  $1/s\le a/p+(1-a)/q$. 
\qed

\bigskip

Next, we prove the sufficiency part of  Theorem \ref{thm_main} when $1/s> a/p+(1-a)/q$. We  reduce it to the case $1/s= a/p+(1-a)/q$ by the following lemma. This reduction procedure is analogous to the arguments in Section (V) in \cite{CKN}.


 \begin{lem}\label{lemPre_3}
 Let $n\ge 2$, $\Omega$ be a bounded open set in $\mathbb{R}^n$ and 
 $u\in C^{0, 1}(\Omega)$. Assume that for any $s, p, q, a, \gamma_1, \gamma_2, \gamma_3, \alpha, \mu, \beta$ satisfying (\ref{eqNCA_1})-(\ref{eqNCA_7}) and $1/s=a/p+(1-a)/q$,  there exists some constant $C$, depending only on $s, p, q, a, \gamma_1, \gamma_2, \gamma_3, \alpha, \mu$ and $\beta$, such that 
   \begin{equation}\label{eqPre3_0}
   \||x|^{\gamma_1}|x'|^{\alpha}u\|_{L^s(\Omega)}\le C'\||x|^{\gamma_2}|x'|^{\mu}\nabla u\|_{L^p(\Omega)}^{a}\||x|^{\gamma_3}|x'|^{\beta}u\|_{L^q(\Omega)}^{1-a}. 
   \end{equation} 
   Then for any  $s, p, q, a, \gamma_1, \gamma_2, \gamma_3, \alpha, \mu$ and $\beta$ satisfying (\ref{eqNCA_1})-(\ref{eqNCA_7}) with $1/s>a/p+(1-a)/q$, 
   there exists some constant $C$ and $0\le a', a''\le 1$, depending only on $s, p, q, a, \gamma_1, \gamma_2, \gamma_3$, $\alpha, \mu, \beta, \Omega$ and $C'$, such that  
   \begin{equation}\label{eqPre3_0'}
   \begin{split}
      \||x|^{\gamma_1}|x'|^{\alpha}u\|_{L^s(\Omega)} & \le C\Big(\||x|^{\gamma_2}|x'|^{\mu}\nabla u\|_{L^p(\Omega)}^{a'}\||x|^{\gamma_3}|x'|^{\beta}u\|_{L^q(\Omega)}^{1-a'}\\
      & \quad +\||x|^{\gamma_2}|x'|^{\mu}\nabla u\|_{L^p(\Omega)}^{a''}\||x|^{\gamma_3}|x'|^{\beta}u\|_{L^q(\Omega)}^{1-a''}\Big). 
      \end{split}
   \end{equation}
      \end{lem}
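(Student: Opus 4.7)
The plan is to realize the given parameter tuple, which sits strictly above the equality hyperplane $1/s = a/p + (1-a)/q$, as a H\"older interpolation between two tuples lying on that hyperplane, and then collapse the resulting product into a sum via weighted AM--GM.

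Concretely, set $\varepsilon := 1/s - a/p - (1-a)/q > 0$. I first note that compatibility of $\varepsilon > 0$ with (\ref{eqNCA_7}) forces $0 < a < 1$, the scaling imbalance $1/p + (\gamma_2+\mu-1)/n \neq 1/q + (\gamma_3+\beta)/n$, and the strict form of (\ref{eqNCA_6_3}). Keeping $p, q, \gamma_2, \gamma_3, \mu, \beta$ fixed, I would then seek two parameter tuples $P_i = (s_i, a_i, \gamma_{1,i}, \alpha_i)$, $i=1,2$, each satisfying (\ref{eqNCA_1})--(\ref{eqNCA_7}) together with the equality $1/s_i = a_i/p + (1-a_i)/q$, along with weights $\eta_1, \eta_2 \in [0,1]$, $\eta_1 + \eta_2 = 1$, such that
\begin{equation*}
   \frac{1}{s} = \frac{\eta_1}{s_1} + \frac{\eta_2}{s_2}, \quad \gamma_1 = \eta_1 \gamma_{1,1} + \eta_2 \gamma_{1,2}, \quad \alpha = \eta_1 \alpha_1 + \eta_2 \alpha_2.
\end{equation*}
Since $\Omega$ is bounded, the coefficients need only match up to absorbable constants: any residual mismatch $|x|^{\sigma}|x'|^{\rho} = |x|^{\sigma'}|x'|^{\rho'} \cdot |x|^{\sigma-\sigma'}|x'|^{\rho-\rho'}$ with $\sigma \ge \sigma'$ and $\rho \ge \rho'$ is bounded by $C(\Omega) |x|^{\sigma'}|x'|^{\rho'}$.

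Once $P_1, P_2$ and $\eta_1, \eta_2$ are in place, generalized H\"older's inequality yields
\begin{equation*}
   \||x|^{\gamma_1}|x'|^{\alpha}u\|_{L^s(\Omega)} \le \||x|^{\gamma_{1,1}}|x'|^{\alpha_1}u\|_{L^{s_1}(\Omega)}^{\eta_1} \||x|^{\gamma_{1,2}}|x'|^{\alpha_2}u\|_{L^{s_2}(\Omega)}^{\eta_2}.
\end{equation*}
Applying the hypothesis (\ref{eqPre3_0}) to each factor gives, with $A := \||x|^{\gamma_2}|x'|^{\mu}\nabla u\|_{L^p(\Omega)}$ and $B := \||x|^{\gamma_3}|x'|^{\beta}u\|_{L^q(\Omega)}$, a bound of the form $C' (A^{a_1}B^{1-a_1})^{\eta_1}(A^{a_2}B^{1-a_2})^{\eta_2}$. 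Finally the weighted AM--GM inequality $X^{\eta_1}Y^{\eta_2} \le \eta_1 X + \eta_2 Y$, applied with $X = A^{a_1}B^{1-a_1}$ and $Y = A^{a_2}B^{1-a_2}$, produces the two-term bound (\ref{eqPre3_0'}) with $a' = a_1$ and $a'' = a_2$.

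The main obstacle lies in the first step: selecting $P_1$ and $P_2$ so that each tuple satisfies every inequality in (\ref{eqNCA_1})--(\ref{eqNCA_7}), with equality in the scaling relation, while collectively interpolating the given $(1/s, \gamma_1, \alpha)$. The excess $\varepsilon$ must be absorbed by a spread in the $a_i$ values, and the boundedness of $\Omega$ is essential here: it permits small perturbations of the weight exponents $(\gamma_{1,i}, \alpha_i)$ away from the values demanded by (\ref{eqNCA_5}) at equality to be compensated by constants depending on $\Omega$, so that the case analysis involving (\ref{eqNCA_6_1})--(\ref{eqNCA_6_3}) can be pushed through in every configuration of signs and degeneracies.
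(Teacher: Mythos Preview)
Your overall strategy---reduce to the equality case $1/s=a/p+(1-a)/q$ by H\"older and then apply the hypothesis---is the same as the paper's, but the specific H\"older step you propose does not go through in general. You require
\[
   \frac{1}{s}=\frac{\eta_1}{s_1}+\frac{\eta_2}{s_2},\qquad \frac{1}{s_i}=\frac{a_i}{p}+\frac{1-a_i}{q},\ \ a_i\in[0,1],
\]
so that $1/s$ is a convex combination of $1/s_1,1/s_2$, each of which lies in $[\min(1/p,1/q),\max(1/p,1/q)]$. Hence your scheme needs $1/s\le\max(1/p,1/q)$, and this is \emph{not} implied by (\ref{eqNCA_1})--(\ref{eqNCA_7}) together with $1/s>a/p+(1-a)/q$. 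For instance, with $n=2$, $(s,p,q,a)=(1,2,3,1/2)$ and $(\gamma_1,\gamma_2,\gamma_3,\alpha,\mu,\beta)=(-2/3,0,0,0,0,0)$, one checks all of (\ref{eqNCA_1})--(\ref{eqNCA_7}) hold with $1/s=1>5/12=a/p+(1-a)/q$, yet $1/s=1>\tfrac12=\max(1/p,1/q)$, so no choice of $a_1,a_2\in[0,1]$ places $1/s$ between $1/s_1$ and $1/s_2$. Your remark about boundedness of $\Omega$ addresses only weight-exponent mismatches, not the Lebesgue exponent.

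The paper avoids this by running H\"older the other way: it pairs $|x|^{\gamma_1}|x'|^{\alpha}u$ against the weight difference $|x|^{\gamma_1-\gamma_1'}|x'|^{\alpha-\alpha'}$, so that
\[
   \||x|^{\gamma_1}|x'|^{\alpha}u\|_{L^s}\le \||x|^{\gamma_1'}|x'|^{\alpha'}u\|_{L^{s'}}\,\big\||x|^{\gamma_1-\gamma_1'}|x'|^{\alpha-\alpha'}\big\|_{L^{ss'/(s'-s)}},
\]
which requires $s'>s$, i.e.\ $1/s'<1/s$. Since $1/s'=a'/p+(1-a')/q$ can always be taken close to $a/p+(1-a)/q<1/s$ by choosing $a'$ near $a$, this direction is always available. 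Two choices $a',a''$ (on opposite sides of $a$) are then needed only to make the weight integral finite both near the origin and near infinity in $|x|$; the $|x'|$-integrability near the axis is handled separately. If you restrict to genuinely bounded $\Omega$, a single choice $(s',a',\gamma_1',\alpha')$ with $a'$ close to $a$ already suffices, but your interpolation formulation must be replaced by this one-sided H\"older step.
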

       \begin{proof}
        For  $u\in C^{0, 1}(\Omega)$,  we assume (\ref{eqPre3_0}) holds, and we will  prove (\ref{eqPre3_0'}). Let $C$ denote  a positive constant depending only on 
   $s, p, q, a, \gamma_1, \gamma_2, \gamma_3, \alpha, \mu, \beta, \Omega$ and $C'$    which may vary from line to line.  Condition (\ref{eqNCA_7}) and $1/s>a/p+(1-a)/q$ imply $0<a<1$. 
   Denote 
   $A:=\||x|^{\gamma_2}|x'|^{\mu}\nabla u\|_{L^p(\mathbb{R}^n)}$ and $B:=\||x|^{\gamma_3}|x'|^{\beta}u\|_{L^{q}(\mathbb{R}^n)}$. 
  
   For constants $0\le a', a''\le 1$, $\alpha', \alpha''$, we define $s', s'', \gamma_1',\gamma_1''$ by 
    \begin{equation}\label{eqPre3_2}
   \begin{split}
         & \frac{1}{s'}=\frac{a'}{p}+\frac{1-a'}{q}, \quad \gamma'_1+\alpha'=a'(\gamma_2+\mu-1)+(1-a')(\gamma_3+\beta),\\
         &  \frac{1}{s''}=\frac{a''}{p}+\frac{1-a''}{q}, \quad \gamma''_1+\alpha''=a''(\gamma_2+\mu-1)+(1-a'')(\gamma_3+\beta). 
         \end{split}
   \end{equation}
   Let $\zeta(x)$ be a smooth function satisfying $\zeta(x)=1$ for $|x|\le 1$,  $\zeta(x)=0$ for $|x|\ge 2$ and $|\nabla \xi(x)|\le 3$.  We have
     \begin{equation}\label{eqPre3_7}
     \||x|^{\gamma_1}|x_1|^{\alpha}u\|_{L^s(\mathbb{R}^n)}\le \||x|^{\gamma_1}|x_1|^{\alpha}\zeta u\|_{L^s(\mathbb{R}^n)}+\||x|^{\gamma_1}|x_1|^{\alpha}(1-\zeta)u\|_{L^s(\mathbb{R}^n)} =: I_1+I_2. 
  \end{equation}
       We estimate 
  \begin{equation}\label{eqPre3_8}
    I_1\le  \||x|^{\gamma'_1}|x_1|^{\alpha'}u\|_{L^{s'}(\mathbb{R}^n)}\left(\int_{|x|\le 2}\left||x|^{\gamma_1-\gamma'_1}|x'|^{\alpha-\alpha'}\right|^{ss'/(s'-s)}\right)^{1/s-1/s'}, 
  \end{equation}
  and 
  \begin{equation}\label{eqPre3_9}
    I_2\le  \||x|^{\gamma''_1}|x_1|^{\alpha''}u\|_{L^{s''}(\mathbb{R}^n)}\left(\int_{|x|\ge 1}\left||x|^{\gamma_1-\gamma''_1}|x'|^{\alpha-\alpha''}\right|^{ss''/(s''-s)}\right)^{1/s-1/s''}. 
  \end{equation}
  by H\"{o}lder's inequality, provided 
    \begin{equation}\label{eqPre3_3}
    \frac{1}{s'}< \frac{1}{s} \ \ \mbox{and}\ \  \frac{1}{s''}< \frac{1}{s}. 
      \end{equation}
  The second integrals on the right hand sides in (\ref{eqPre3_8}) and (\ref{eqPre3_9}) are finite if 
  \begin{equation}\label{eqPre3_4}
    \frac{1}{s'}+\frac{\gamma'_1+\alpha'}{n}<\frac{1}{s}+\frac{\gamma_1+\alpha}{n}     < \frac{1}{s''}+\frac{\gamma''_1+\alpha''}{n}, 
  \end{equation}
  \begin{equation}\label{eqPre3_5}
     \frac{1}{s'}+\frac{\alpha'}{n-1}<\frac{1}{s}+\frac{\alpha}{n-1} \ \ \mbox{and}\ \   \frac{1}{s''}+\frac{\alpha''}{n-1}<\frac{1}{s}+\frac{\alpha}{n-1}.  
  \end{equation}
    By the assumption of the lemma, we will have 
   \begin{equation}\label{eqPre3_18}
      \||x|^{\gamma'_1}|x_1|^{\alpha'}u\|_{L^{s'}(\mathbb{R}^n)}\le CA^{a'} B^{1-a'}, \quad \||x|^{\gamma''_1}|x_1|^{\alpha''}u\|_{L^{s''}(\mathbb{R}^n)}\le CA^{a''}B^{1-a''},
      \end{equation}
      provided 
      (\ref{eqNCA_1})-(\ref{eqNCA_7}) with $s, a, \gamma_1, \alpha$ there replaced by $s', a', \gamma'_1, \alpha'$ or $s'', a'',  \gamma_1'', \alpha''$ respectively. 
  
  So by (\ref{eqPre3_7})-(\ref{eqPre3_9}) and (\ref{eqPre3_18}), to prove (\ref{eqPre3_0'}), we only need to choose appropriate $a', a'', \alpha'$ and $\alpha''$ such that (\ref{eqPre3_3})-(\ref{eqPre3_5}) are satisfied,  and (\ref{eqNCA_1})-(\ref{eqNCA_7}) hold with $s, a, \gamma_1, \alpha$ there replaced by $s', a', \gamma'_1, \alpha'$ or $s'', a'',  \gamma_1'', \alpha''$ respectively. 
  
     The choice of $a'$ and $\alpha'$ and the choice of $a''$ and $\alpha''$ can be made independently and analogously. We always require $a'$ and $a''$ to be close to $a$ and in particular $0< a', a''< 1$.  
     By (\ref{eqPre3_2}),      conditions (\ref{eqNCA_1}), (\ref{eqNCA_5}) and (\ref{eqNCA_7}) always hold with $s, a, \gamma_1, \alpha$ there replaced by $s', a', \gamma'_1, \alpha'$ or $s'', a'',  \gamma_1'', \alpha''$ respectively. 
     By (\ref{eqPre3_2}), we have 
     \[
        a'(\gamma_2+\mu)+(1-a')(\gamma_3+\beta)-(\gamma_1'+\alpha')=a'.  
             \]
     By the above requirement on $a'$ and $a''$, we have (\ref{eqNCA_6_2})  with $s, a, \gamma_1, \alpha$ there replaced by $s', a', \gamma'_1, \alpha'$ respectively. Similarly,      we have (\ref{eqNCA_6_2})  with $s, a, \gamma_1, \alpha$ there replaced by $s'', a'',  \gamma_1'', \alpha''$ respectively.
          
     By  (\ref{eqPre3_2}), we have 
     \[
        \frac{1}{s'}+\frac{\gamma_1'+\alpha'}{n}=a'(\frac{1}{p}+\frac{\gamma_2+\mu-1}{n})+(1-a')(\frac{1}{q}+\frac{\gamma_3+\beta}{n}). 
     \]
     By (\ref{eqNCA_3}) and (\ref{eqNCA_5}), the right hand side of the above is strictly positive when $a'=a$. Thus  as long as we choose $a'$ close enough to $a$, we have $1/s'+(\gamma_1'+\alpha')/n>0$, and therefore (\ref{eqNCA_3}) holds with $s, a, \gamma_1, \alpha$ there replaced by $s', a', \gamma'_1, \alpha'$ respectively. Similarly, we have (\ref{eqNCA_3})  with $s, a, \gamma_1, \alpha$ there replaced by  $s'', a'',  \gamma_1'', \alpha''$ respectively, as long as  we choose $a''$ close enough to $a$.

     Moreover, by the assumption  $1/s>a/p+(1-a)/q$ and the definition of $s'$ and $s''$ in (\ref{eqPre3_2}), we have that (\ref{eqPre3_3}) hold as long as $a'$ and $a''$ are close enough to $a$.  
    By (\ref{eqNCA_7}), (\ref{eqNCA_5}) and the assumption that $1/s>a/p+(1-a)/q$, we have  $1/p+(\gamma_2+\mu-1)/n\ne 1/q+(\gamma_3+\beta)/n$.  For (\ref{eqPre3_4}) to hold,  we only need to require 
  \begin{equation*}
     \begin{split}
        & 0< a'<a<a''< 1, \quad \textrm{ if } \frac{1}{p}+\frac{\gamma_2+\mu-1}{n}>\frac{1}{q}+\frac{\gamma_3+\beta}{n},\\
        &1> a'>a>a''> 0, \quad \textrm{ if } \frac{1}{p}+\frac{\gamma_2+\mu-1}{n}<\frac{1}{q}+\frac{\gamma_3+\beta}{n}.
     \end{split}
  \end{equation*}
  
   
  It remains to show that we can further require $a', a'', \alpha', \alpha''$ to satisfy additional properties, such that (\ref{eqPre3_5}) is satisfied, and (\ref{eqNCA_2}), (\ref{eqNCA_6_1}) and (\ref{eqNCA_6_3}) hold with $s, a, \gamma_1, \alpha$ there replaced by $s', a', \gamma'_1, \alpha'$ or $s'', a'',  \gamma_1'', \alpha''$ respectively. 
  By the definition of $1/s'$ and $1/s''$ in (\ref{eqPre3_2}),  equation (\ref{eqPre3_5}) holds provided
  \begin{equation}\label{eqPre3_20}
        \alpha'<G(a'), \quad \alpha''<G(a''),
  \end{equation}
  where $G(\theta)=(n-1)(1/s-\theta/p-(1-\theta)/q)+\alpha$. 
  By the definition of $1/s'$ and $1/s''$ in (\ref{eqPre3_2}), equation (\ref{eqNCA_2}) holds with $s, a, \gamma_1, \alpha$ there replaced by $s', a', \gamma'_1, \alpha'$ or $s'', a'',  \gamma_1'', \alpha''$ respectively, provided
  \begin{equation}\label{eqPre3_21}
     \alpha'>F_1(a'), \quad \alpha''>F_1(a'),
  \end{equation}
  where $F_1(\theta)=-(n-1)(\theta/p+(1-\theta)/q)$. 
  By the definition of $\gamma_1'+\alpha'$ and $\gamma_1''+\alpha''$ in (\ref{eqPre3_2}), 
  equation (\ref{eqNCA_6_1}) holds with $s, a, \gamma_1, \alpha$ there replaced by $s', a', \gamma'_1, \alpha'$ or $s'', a'',  \gamma_1'', \alpha''$ respectively, provided
  \begin{equation}\label{eqPre3_22}
     \alpha'>F_2(a'), \quad \alpha''>F_2(a''),
       \end{equation}
  where $F_2(\theta)=\theta(\mu-1)+(1-\theta)\beta$. 
  By the definition of $1/s'$ and $1/s''$ in (\ref{eqPre3_2}), equation  (\ref{eqNCA_6_3}) holds with $s, a, \gamma_1, \alpha$ there replaced by $s', a', \gamma'_1, \alpha'$ or $s'', a'',  \gamma_1'', \alpha''$ respectively, provided (\ref{eqPre3_22}). So we only need to further require $a', a'', \alpha', \alpha''$ to satisfy (\ref{eqPre3_20})-(\ref{eqPre3_22}).
  
  By (\ref{eqNCA_2}), $1/s+\alpha/(n-1)>0$, so we have  $F_1(a)<G(a)$.  By (\ref{eqNCA_6_3}), (\ref{eqNCA_7}) and the assumption that $1/s>a/p+(1-a)/q$, the inequality in (\ref{eqNCA_6_3}) is strict, and therefore $F_2(a)<G(a)$. So as long as $a'$ and $a''$ are close enough to $a$, we can find $\alpha'$ and  $\alpha''$ to satisfy
  (\ref{eqPre3_20})-(\ref{eqPre3_22}). Lemma \ref{lemPre_3} is proved. 
  \end{proof}

\bigskip

\noindent\emph{Proof of the sufficiency part of  Theorem \ref{thm_main} when $1/s>a/p+(1-a)/q$.}

\medskip

 In this case, by (\ref{eqNCA_5}) and (\ref{eqNCA_7}), we must have $1/p+(\gamma_2+\mu-1)/n\ne 1/q+(\gamma_3+\beta)/n$. So there exist some constants $C$ and $\lambda$, such that $\hat{u}=Cu(\lambda x)$ satisfies $\||x|^{\gamma_2}|x'|^{\mu}\nabla \hat{u}\|_{L^p(\mathbb{R}^n)}=1$ and $\||x|^{\gamma_3}|x'|^{\beta}\hat{u}\|_{L^{q}(\mathbb{R}^n)}=1$.  
By Theorem \ref{thm_main} for $1/s\le a/p+(1-a)/q$, (\ref{eqNC}) holds for $\hat{u}$ and all $s, p, q, a, \gamma_1, \gamma_2, \gamma_3, \alpha, \mu, \beta$ satisfying (\ref{eqNCA_1})-(\ref{eqNCA_7}) and $1/s\le a/p+(1-a)/q$.  
 Then by Lemma \ref{lemPre_3}, when $1/s> a/p+(1-a)/q$, we have 
 \[
   \||x|^{\gamma_1}|x'|^{\alpha}\hat{u}\|_{L^p(\mathbb{R}^n)}\le C\||x|^{\gamma_2}|x'|^{\mu}\nabla \hat{u}\|_{L^p(\mathbb{R}^n)}^a\||x|^{\gamma_3}|x'|^{\beta}\hat{u}\|_{L^{q}(\mathbb{R}^n)}^{1-a}. 
\]
Then (\ref{eqNC}) holds for $u$ by scaling.  
\qed

\bigskip

The sufficiency part of Theorem \ref{thm_main} is proved.

\section{Two variants of Theorem \ref{thm_main} and Theorem A}\label{sec_6}

We have the following variant of Theorem \ref{thm_main}.

\begin{thm}\label{cor_in}
     Let $n\ge 2$,  $0\le r_1< r_2\le \infty$, $\epsilon>0$, $K:=K_{r_1, r_2, \epsilon}$ be defined as (\ref{eqR_e}), and $s, p, q, a, \gamma_1, \gamma_2, \gamma_3, \alpha, \mu$ and $\beta$ be real numbers satisfying (\ref{eqNCA_1})-(\ref{eqNCA_7}). 
          Then 
     there exists some positive constant $C$, depending only on $\epsilon, s, p, q, a, \gamma_1, \gamma_2, \gamma_3, \alpha, \mu$ and $\beta$, 
  such that for all $u\in C^{1}(\bar{K})$ with $u=0$ on $\partial K$, 
 \begin{equation}\label{eqNC_in}
  \||x|^{\gamma_1}|x'|^{\alpha}u\|_{L^s(K)}  \le C\||x|^{\gamma_2}|x'|^{\mu}\nabla u\|^a_{L^p(K)}\||x|^{\gamma_3}|x'|^{\beta}u\|_{L^q(K)}^{1-a}. 
    \end{equation}
Furthermore, on any compact set in the parameter space in which (\ref{eqNCA_1})-(\ref{eqNCA_3}) hold, the constant $C$ is bounded. 
\end{thm}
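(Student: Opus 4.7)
The plan is to reduce Theorem~\ref{cor_in} to Lemma~\ref{lemQ_4} by a zero-extension argument, with the case $1/s > a/p + (1-a)/q$ handled via the reduction embodied in Lemma~\ref{lemPre_3}.

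\textbf{Case 1: $1/s \le a/p + (1-a)/q$.} Choose $\epsilon' \in (\epsilon, 1]$ (for instance $\epsilon' = \min\{2\epsilon, 1\}$) and set $K' := K_{r_1, r_2, \epsilon'}$. Since $u \in C^1(\bar K)$ vanishes on $\partial K$---which includes the lateral part $\{|x'| = \epsilon|x|\}$ as well as the spherical caps at $|x| = r_1$ (if $r_1 > 0$) and $|x| = r_2$ (if $r_2 < \infty$)---the zero extension $\tilde u := u\chi_K$ is Lipschitz on $\bar{K'}$ with $\nabla\tilde u = (\nabla u)\chi_K$ a.e. Extending $\tilde u$ further by zero to $\mathbb{R}^n$ and mollifying with standard mollifiers $\rho_\delta$ produces a sequence $u_\delta \in C^\infty(\bar{K'})$ converging to $\tilde u$ pointwise and, by standard mollification arguments in weighted Lebesgue spaces, in each of the weighted norms appearing in (\ref{eqNC_in}) (we may assume the RHS is finite, else the inequality is trivial). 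Applying Lemma~\ref{lemQ_4} to $u_\delta$ with $\epsilon_1 = \epsilon$ and $\epsilon_2 = \epsilon'$ and passing to the limit $\delta \to 0$, the annular boundary term $\||x|^{\gamma_1}|x'|^\alpha \tilde u\|_{L^s(K' \setminus K)}$ vanishes because $\tilde u \equiv 0$ on $K' \setminus K$, while the weighted $L^p$-gradient and $L^q$-norms on $K'$ collapse to the corresponding norms on $K$. This yields (\ref{eqNC_in}).

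\textbf{Case 2: $1/s > a/p + (1-a)/q$.} The proof of Lemma~\ref{lemPre_3} adapts with $\mathbb{R}^n$ replaced by $K$: split $u = \zeta u + (1 - \zeta) u$ via a radial cutoff $\zeta$ as in (\ref{eqPre3_7}) and apply H\"older's inequality as in (\ref{eqPre3_8})--(\ref{eqPre3_9}) on the bounded region $K \cap \{|x| \le 2\}$ and the tail region $K \cap \{|x| \ge 1\}$. The auxiliary exponent tuples $(s', a', \alpha')$ and $(s'', a'', \alpha'')$ are chosen as in Lemma~\ref{lemPre_3} so that (\ref{eqPre3_3})--(\ref{eqPre3_5}) and the relations (\ref{eqNCA_1})--(\ref{eqNCA_7}) hold at these parameters, in particular $1/s' = a'/p + (1-a')/q$ and the analogous equality for $''$. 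One then invokes Case~1 on $K$ with these parameters to estimate $\||x|^{\gamma_1'}|x'|^{\alpha'} u\|_{L^{s'}(K)}$ and $\||x|^{\gamma_1''}|x'|^{\alpha''} u\|_{L^{s''}(K)}$, combining to obtain (\ref{eqNC_in}).

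The main obstacle is the density argument in Case~1 when $r_2 = \infty$: the cone $K'$ is then unbounded and $\tilde u$ need not be compactly supported, so one must verify that $\nabla u_\delta \to \nabla \tilde u$ in the weighted $L^p(K')$ norm and similarly for the weighted $L^s$ and $L^q$ norms. This follows from the finiteness of the RHS of (\ref{eqNC_in}) (which forces $\nabla u$ to lie in the appropriate weighted space) combined with standard continuity of convolution, possibly after first approximating $\tilde u$ by Lipschitz truncations $\tilde u\cdot \chi(|x|/R)$ and then mollifying. The uniformity of $C$ on compact parameter sets is inherited directly from the corresponding uniformity statements in Lemma~\ref{lemQ_4} and Lemma~\ref{lemPre_3}.
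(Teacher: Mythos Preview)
Your Case~1 is essentially the paper's argument, carried out more carefully: zero-extend $u$ to a slightly wider truncated cone, apply Lemma~\ref{lemQ_4} with $\epsilon_1=\epsilon$ and $\epsilon_2=\epsilon'$, and observe that the boundary term vanishes. This is correct.

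Your Case~2 has a genuine gap. Adapting the proof of Lemma~\ref{lemPre_3} on $K$ and invoking Case~1 at the auxiliary parameter tuples yields only the two-term bound
\[
\||x|^{\gamma_1}|x'|^{\alpha}u\|_{L^s(K)}\le C\big(A^{a'}B^{1-a'}+A^{a''}B^{1-a''}\big),
\]
where $A=\||x|^{\gamma_2}|x'|^{\mu}\nabla u\|_{L^p(K)}$ and $B=\||x|^{\gamma_3}|x'|^{\beta}u\|_{L^q(K)}$. Since $a'$ and $a''$ are chosen on opposite sides of $a$, this does \emph{not} imply the one-term bound $CA^aB^{1-a}$ for a fixed $u$: with $t=A/B$ one would need $t^{a'}+t^{a''}\le C't^a$ uniformly in $t>0$, which fails as $t\to 0$ or $t\to\infty$. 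The missing step is the scaling normalization the paper performs. From $1/s>a/p+(1-a)/q$, conditions (\ref{eqNCA_5}) and (\ref{eqNCA_7}) force $1/p+(\gamma_2+\mu-1)/n\ne 1/q+(\gamma_3+\beta)/n$, so one may choose $c,\lambda>0$ so that $\hat u(x)=c\,u(\lambda x)$ has $A(\hat u)=B(\hat u)=1$; the two-term bound for $\hat u$ then collapses to $2C=2C\,A(\hat u)^aB(\hat u)^{1-a}$, and (\ref{eqNC_in}) for $u$ follows by (\ref{eqNCA_5}) and scaling back. Note that this rescaling moves $r_1,r_2$, so to make the argument clean you should first zero-extend $u$ to the scale-invariant cone $K_{0,\infty,\epsilon}$ (where the extended $u$ still vanishes on the lateral boundary, so Case~1 applies there with the same constant), run the normalization and Lemma~\ref{lemPre_3} on that cone, and only at the end read all integrals back over the original $K$.
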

\begin{proof}
Extend $u$ to be zero outside $K$. When $1/s\le a/p+(1-a)/q$, apply Lemma \ref{lemQ_4} to $u$ with $K_{\epsilon_1}=K$ and $K_{\epsilon_2}$ be a larger cone containing $K$, we obtain (\ref{eqNC_in}). 

Now we consider the case when $1/s>  a/p+(1-a)/q$.
 By (\ref{eqNCA_5}) and (\ref{eqNCA_7}), we have $1/p+(\gamma_2+\mu-1)/n\ne 1/q+(\gamma_3+\beta)/n$. So there exist some constants $C$ and $\lambda$, such that $\hat{u}=Cu(\lambda x)$ satisfies $\||x|^{\gamma_2}|x'|^{\mu}\nabla \hat{u}\|_{L^p(K)}=1$ and $\||x|^{\gamma_3}|x'|^{\beta}\hat{u}\|_{L^{q}(K)}=1$.  
 Since we have proved (\ref{eqNC_in}) when $1/s= a/p+(1-a)/q$, we can apply Lemma \ref{lemPre_3} to $\hat{u}$ to obtain, for some $0\le a',  a''\le 1$, that  
 \[
\begin{split}
  &  \quad \||x|^{\gamma_1}|x'|^{\alpha}\hat{u}\|_{L^s(K)}\\
   & \le C\Big(\||x|^{\gamma_2}|x'|^{\mu}\nabla u\|_{L^p(K)}^{a'}\||x|^{\gamma_3}|x'|^{\beta}u\|_{L^q(K)}^{1-a'}+\||x|^{\gamma_2}|x'|^{\mu}\nabla u\|_{L^p(K)}^{a''}\||x|^{\gamma_3}|x'|^{\beta}u\|_{L^q(K)}^{1-a''}\Big)\\
   & =2C\||x|^{\gamma_2}|x'|^{\mu}\nabla \hat{u}\|_{L^p(K)}^a\||x|^{\gamma_3}|x'|^{\beta}\hat{u}\|_{L^{q}(K)}^{1-a}. 
   \end{split}
\]
Inequality (\ref{eqNC_in}) follows. 
\end{proof}

  The following is a variant of Theorem A.
    \begin{thm}\label{thm6_1}
     Let  $n\ge 1$, $R>0$, $B_R=\{x\in\mathbb{R}^n\mid |x|\le R\}$, 
     $0<\lambda<\infty$, 
     Assume 
     $s, p, q, a, \gamma_1, \gamma_2, \gamma_3$ satisfy (\ref{eqNCA_1}), (\ref{eqNCB_2})-(\ref{eqNCB_5}). Moreover, assume $1\le p\le \infty$   if    $1\le \lambda<\infty$, and $\max\{1, (n-1)/(1+(n-1)\lambda)\}\le p\le \infty$ if $0<\lambda<1$.  
     Then there exists some positive constant $C$, depending only on $s, p, q, a,  \gamma_1, \gamma_2, \gamma_3$ and $\lambda$,  
     such that for every nonnegative $w\in W^{1, 1}(B_R)$, $v:=w-(\avint_{\partial B_{|x|}}w^{1/\lambda})^{\lambda}$ satisfies
      \begin{equation}\label{eqPre3_1_0'}
  \||x|^{\gamma_1}v\|_{L^s(B_R)}\le C\||x|^{\gamma_2}\nabla v\|_{L^p(B_R)}^a\||x|^{\gamma_3}v \|_{L^q(B_R)}^{1-a}. 
  \end{equation}
  Furthermore, on any compact set in the parameter space in which (\ref{eqNCA_1}) and (\ref{eqNCB_2}) hold, 
  the constant $C$ is bounded. 
 \end{thm}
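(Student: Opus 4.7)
The plan is to combine the nonlinear Poincar\'e inequality (Theorem~\ref{thm1-new}) applied on each concentric sphere with the extended Caffarelli--Kohn--Nirenberg inequality (Theorem~\ref{thmD_2}). By density and positivity, I first reduce to the case $w\in C^\infty(\overline{B_R})$ with $w>0$, so that $\bar w(r):=(\avint_{\partial B_r}w^{1/\lambda})^\lambda$ is smooth in $r$ and $v=w-\bar w(|x|)\in C^\infty(\overline{B_R})$. Since $\bar w$ is radially constant on each sphere, $\nabla_\tau v=\nabla_\tau w$, where $\nabla_\tau$ denotes the tangential gradient on $\partial B_r$.

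The key step is to apply Theorem~\ref{thm1-new} to the compact $(n-1)$-dimensional Riemannian manifold $\partial B_r\cong r\mathbb S^{n-1}$, with $\Omega=S=\partial B_r$, for each $r\in(0,R]$. The compatibility assumption on $(\lambda,p)$ in Theorem~\ref{thm6_1} is precisely the hypothesis of Theorem~\ref{thm1-new} in dimension $n-1$. A scaling argument yields, uniformly in $r$,
\begin{equation*}
\|v(r,\cdot)\|_{L^p(\partial B_r)}\le C_0\, r\,\|\nabla_\tau w(r,\cdot)\|_{L^p(\partial B_r)}\le C_0\, r\,\|\nabla v(r,\cdot)\|_{L^p(\partial B_r)}.
\end{equation*}
Integrating against the radial weight $r^{\gamma_2 p}\,dr$ produces the weighted Poincar\'e bound $\||x|^{\gamma_2}v\|_{L^p(B_R)}\le C\||x|^{\gamma_2+1}\nabla v\|_{L^p(B_R)}\le CR\||x|^{\gamma_2}\nabla v\|_{L^p(B_R)}$, and similarly on dyadic sub-annuli.

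To conclude, I would extend $v$ to a compactly supported $\tilde v\in W^{1,p}(\mathbb R^n)$ by radial reflection across $\partial B_R$ combined with a smooth cutoff supported in $B_{2R}$, so that the weighted norms of $\tilde v$ on the extension annulus $B_{2R}\setminus B_R$ (where $|x|^{\gamma_i}\simeq R^{\gamma_i}$) are controlled by the corresponding weighted norms of $v$ on $B_R$; the cutoff-derivative contribution $R^{-1}\||x|^{\gamma_2}v\|_{L^p(B_R)}$ is absorbed via the weighted Poincar\'e bound above. Applying Theorem~\ref{thmD_2} to $\tilde v$ (after approximation by $C_c^1$ functions) and restricting the $L^s$-integral on the left to $B_R$ yields the required inequality. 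Uniformity of $C$ on compact subsets of parameter space follows from the corresponding uniformity in Theorems~\ref{thm1-new} and~\ref{thmD_2}.

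The main obstacle will be the weighted extension step: when some $\gamma_i>0$ the weights $|x|^{\gamma_i}$ on $B_R$ may be much smaller near the origin than on the extension annulus, so a pure reflection does not automatically dominate the extension norm by the original weighted norm. This is handled by combining the reflection with a radial rescaling (so that small $|x|$ in $B_R$ is mapped to small $|x|$ in the annulus), or alternatively by a dyadic decomposition of $B_R$ in $r$ on which the weights are comparable and one argues by a local cutoff together with the spherical Poincar\'e estimate. A secondary technical point is the pointwise bound $|\bar w'(r)|\le C\avint_{\partial B_r}|\nabla w|\,d\sigma$, obtained by differentiating the definition of $\bar w$ and applying H\"older with the positivity of $w$; this relates $\nabla v$ and $\nabla w$ in all weighted norms appearing on the right-hand side.
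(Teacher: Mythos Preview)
Your approach is correct but differs from the paper's. Both start from the same key observation---applying Theorem~\ref{thm1-new} on each sphere $\partial B_r$ to get $\|v(r,\cdot)\|_{L^p(\partial B_r)}\le C r\|\nabla_\tau v(r,\cdot)\|_{L^p(\partial B_r)}$---but diverge from there.

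The paper never extends $v$ or invokes Theorem~\ref{thmD_2}. Instead it splits into two cases. When $1/s\le a/p+(1-a)/q$, it decomposes $B_R$ into dyadic annuli $R_k=\{2^{-k}\le |x|\le 2^{-k+1}\}$ and proves the full interpolation inequality $\||x|^{\gamma_1}v\|_{L^s(R_k)}\le C\||x|^{\gamma_2}\nabla v\|_{L^p(R_k)}^a\||x|^{\gamma_3}v\|_{L^q(R_k)}^{1-a}$ directly on each annulus. This is done (after scaling to $k=1$) by writing $\|v\|_{L^s}^{s/\bar s}=\||v|^{s/\bar s}\|_{L^{\bar s}}$, applying H\"older to split off an $L^q$ factor, using Sobolev $W^{1,1}\hookrightarrow L^t$ on the remaining factor, and then absorbing the lower-order $\|v\|_{L^p(R_1)}$ term via the spherical Poincar\'e integrated over $r\in[1/2,1]$. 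Summing over $k$ using the power inequality $\sum x_k^c y_k^d\le(\sum x_k)^c(\sum y_k)^d$ (valid since $as/p+(1-a)s/q\ge 1$) gives the global estimate. The case $1/s>a/p+(1-a)/q$ is then reduced to the previous one by the normalization trick (choosing $\lambda_1,\lambda_2$ so that both factors on the right equal $1$).

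Your route---extend $v$ by reflection and a cutoff, then apply Theorem~\ref{thmD_2} once---also works and is arguably shorter once Theorem~\ref{thmD_2} is in hand; it also treats both sign cases of $1/s-a/p-(1-a)/q$ uniformly. Two remarks: (i) your stated ``main obstacle'' is not one. The reflection across $\partial B_R$ maps the extension annulus $B_{2R}\setminus B_R$ only to the outer annulus $B_R\setminus B_{R/2}$, on which every weight $|x|^{\gamma_i}$ is comparable to $R^{\gamma_i}$; the behavior of the weights near the origin is irrelevant, and no radial rescaling or dyadic patching is needed. (ii) The closing comment about bounding $|\bar w'(r)|$ to relate $\nabla v$ and $\nabla w$ is unnecessary: the statement and proof are entirely in terms of $v$ and $\nabla v$, and you already observed $\nabla_\tau v=\nabla_\tau w$, which is all the spherical Poincar\'e needs.
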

 \begin{proof}
   Let $C$ denote a positive constant depending only on $s, p, q, a, \gamma_1, \gamma_2, \gamma_3$ and $\lambda$, which may vary from line to line. 

 For $a=0$, we deduce from (\ref{eqNCB_3}), (\ref{eqNCB_4}) and (\ref{eqNCB_5}) that $\gamma_1=\gamma_3$ and $s=q$, thus (\ref{eqPre3_1_0'}) is obvious. In the rest of the proof we assume $0<a\le 1$.

\bigskip

\noindent\textbf{Case 1.} 
$1/s\le a/p+(1-a)/q$.

\medskip

Let 
 \[
    R_k:=\{x'\in B_1\mid \frac{1}{2^k}\le |x'|\le \frac{1}{2^{k-1}}\},     \quad k\in \mathbb{Z}.
 \]
 We first prove that 
 \begin{equation}\label{eqPre3_1_2_0}
 \||x|^{\gamma_1} v\|_{L^{s}(R_k)}\le C\||x|^{\gamma_2} \nabla v\|^{a}_{L^{p}(R_k)}\||x|^{\gamma_3} v\|^{1-a}_{L^{q}(R_k)}, \quad k\in \mathbb{Z}.
 \end{equation}
By scaling, using (\ref{eqNCB_3}), we only need to prove (\ref{eqPre3_1_2_0}) for $k=1$.

Let $\bar{s}, \bar{q}$ and $\bar{a}$ be defined as in (\ref{eqPre0_0}). Since $a>0$, we have $\bar{a}>0$. Define $t\in (0, \infty]$ by 
\[
  \frac{1}{t}=\frac{1}{\bar{a}}(\frac{1}{\bar{s}}-\frac{1-\bar{a}}{\bar{q}}), \quad \textrm{ if }\frac{1}{\bar{s}}-\frac{1-\bar{a}}{\bar{q}}>0,
\]
and $t=\infty$ if $1/\bar{s}-(1-\bar{a})/\bar{q}=0$. 

In the current case we have $1/s\le a/p+(1-a)/q$. By (\ref{eqNCB_3}) and (\ref{eqNCB_4}), we have $1/s\ge a(1/p-1/n)+(1-a)/q$. By the same arguments as in part (g) in the proof of Lemma \ref{lemPre2_1}, 
we have $1/\bar{s}\le \bar{a}+(1-\bar{a})/\bar{q}$ and $1/\bar{s}-(1-\bar{a})/\bar{q}\ge \bar{a}(n-1)/n\ge 0$, and therefore $(n-1)/n\le 1/t\le 1$. 
We have proved that 
\[
  1\le t\le \frac{n}{n-1} \textrm{ for }n\ge 2\quad \textrm{ and }1\le t\le \infty\textrm{ for }n=1. 
\]
By H\"older's inequality, provided $1/\bar{s}=\bar{a}/t+(1-\bar{a})/\bar{q}$, $0< \bar{a}\le 1$, $1\le t\le \infty$,  and $\bar{q}>0$, we have, 
\begin{equation}\label{eqPre3_1_2_1}
   \|v\|_{L^s(R_1)}^{s/\bar{s}}=\||v|^{s/\bar{s}}\|_{L^{\bar{s}}(R_1)}\le \||v|^{s/\bar{s}}\|^{\bar{a}}_{L^t(R_1)}\||v|^{s/\bar{s}}\|^{1-\bar{a}}_{L^{\bar{q}}(R_1)}= \||v|^{s/\bar{s}}\|^{\bar{a}}_{L^t(R_1)}\|v\|^{(1-\bar{a})s/\bar{s}}_{L^{q}(R_1)}. 
\end{equation}
where we have used the definition of $\bar{q}$ in the last step. 

Since $1\le t\le n/(n-1)$, we apply H\"older's inequality and Sobolev's inequality to obtain   
\begin{equation}\label{eqPre3_1_2_3}
\begin{split}
   \||v|^{s/\bar{s}}\|_{L^t(R_1)} & \le C\||v|^{s/\bar{s}}\|_{L^{\frac{n}{n-1}}(R_1)}\le C(\||v|^{s/\bar{s}}\|_{L^1(R_1)}+\|\nabla |v|^{s/\bar{s}}\|_{L^1(R_1)})\\
   & \le C\left(\| |v|^{s/\bar{s}-1}\|_{L^{p'}(R_1)}\| v\|_{L^p(R_1)}+\| |v|^{s/\bar{s}-1}\|_{L^{p'}(R_1)}\| \nabla v\|_{L^p(R_1)}\right)\\
   & \le C \|v\|_{L^s}^{s/p'}(\| v\|_{L^p(R_1)}+\| \nabla v\|_{L^p(R_1)}), 
   \end{split}
\end{equation}
where in the last step we have used the fact that $(s/\bar{s}-1)p'=s$ from the definition of $\bar{s}$. 
 
Since $p\ge 1$ when $1\le \lambda<\infty$ and $\max\{1, n/(1+n\lambda)\}\le p\le \infty$ when $0<\lambda<1$, we have, by  Theorem \ref{thm1-new},  that 
\[
       \|v\|_{L^p(R_1)}^p\le \int_{1/2}^{1} \|v\|_{L^p(\partial B_{\rho})}^pd\rho\le C\int_{1/2}^{1} \|\nabla_{tan}v\|_{L^p(\partial B_{\rho})}^pd\rho \le \|\nabla  v\|^p_{L^p(R_1)}. 
   \]
   By (\ref{eqPre3_1_2_1}), (\ref{eqPre3_1_2_3}) and the above, we have 
\begin{equation*}
   \|v\|^{s/\bar{s}}_{L^{s}(R_1)}\le  \||v|^{s/\bar{s}}\|^{\bar{a}}_{L^t(R_1)}\|v\|^{(1-\bar{a})s/\bar{s}}_{L^{q}(R_1)} 
   \le C\|v\|_{L^s}^{\bar{a}s/p'}\|\nabla v\|^{\bar{a}}_{L^p(R_1)}\|v\|^{(1-\bar{a})s/\bar{s}}_{L^q(R_1)}.
\end{equation*}
Using the definition of $\bar{a}$ and $\bar{s}$ in (\ref{eqPre0_0}), we deduce from the above that 
\[
    \|v\|_{L^{s}(R_1)}
   \le C\|\nabla v\|^{a}_{L^p(R_1)}\|v\|^{1-a}_{L^q(R_1)}.
\]
We have proved (\ref{eqPre3_1_2_0}) for $k=1$.

   Since we are in the case $as/p+(1-a)s/q\ge 1$, we can use  (\ref{eqD_C}) to deduce from (\ref{eqPre3_1_2_0}) that 
\[
  \begin{split}
   \sum_{k=-\infty}^{\infty}\int_{R_k}||x|^{\gamma_1}v|^{s}dx 
  & \le C \sum_{k=-\infty}^{\infty}\left(\int_{R_k}||x|^{\gamma_2}\nabla v|^pdx\right)^{as/p}\left(\int_{R_k}||x|^{\gamma_3}v|^qdx'\right)^{(1-a)s/q}\\
            & \le C \left(\sum_{k=-\infty}^{\infty}\int_{R_k}||x|^{\gamma_2}\nabla v|^pdx\right)^{as/p}\left(\sum_{k=-\infty}^{\infty} \int_{R_k}||x|^{\gamma_3}v|^qdx\right)^{(1-a)s/q}\\
            & \le C\||x^{\gamma_2} \nabla v\|^{as}_{L^{p}(B_R)}\||x|^{\gamma_3} v\|^{(1-a)s}_{L^{q}(B_R)}.
   \end{split}
\]
We have proved (\ref{eqPre3_1_0'}) in Case 1.

\bigskip

\noindent\textbf{Case 2.} $1/s> a/p+(1-a)/q$.

\medskip

By (\ref{eqNCB_3}) and (\ref{eqNCB_5}),  we have $1/p+(\gamma_2-1)/n\ne 1/q+\gamma_3/n$. Thus there exist some positive constants $\lambda_1$ and $\lambda_2$, such that $\hat{v}(x)=\lambda_1v(\lambda_2 x)$ satisfies $\||x|^{\gamma_2}\nabla \hat{v}\|_{L^p(K)}=1$ and $\||x|^{\gamma_3}\hat{v}\|_{L^{q}(K)}=1$.  
  By Case 1,   arguing as in the paragraph below (\ref{eqthmQ_2_1}), we can find $0\le a', a''\le 1$ such that  
  \[
\begin{split}
   \||x|^{\gamma_1}\hat{v}\|_{L^s(K)} & \le C\left(\||x|^{\gamma_2}\nabla v\|_{L^p(K)}^{a'}\||x|^{\gamma_3}v\|_{L^q(K)}^{1-a'}+\||x|^{\gamma_2}\nabla v\|_{L^p(K)}^{a''}\||x|^{\gamma_3}v\|_{L^q(K)}^{1-a''}\right)\\
   & =2C\||x|^{\gamma_2}\nabla \hat{v}\|_{L^p(K)}^a\||x|^{\gamma_3}\hat{v}\|_{L^{q}(K)}^{1-a}. 
   \end{split}
\]
Inequality (\ref{eqPre3_1_0'}) follows. 
\end{proof}


\section{Appendix: some facts about the parameters}\label{sec_A}

In this section, we prove some properties of the parameters $s, p, q, a, \gamma_1, \gamma_2, \gamma_3, \alpha, \mu$ and $\beta$ which we use in earlier sections. 

Let $s, p, q, a, \gamma_1, \gamma_2, \gamma_3, \alpha, \mu$ and $\beta$ be real numbers satisfying (\ref{eqNCA_1}), define $\bar{s}, \bar{p},  \bar{q}, \bar{a}$, $\bar{\gamma}_1, \bar{\gamma}_2, \bar{\gamma}_3, \bar{\alpha}, \bar{\mu}$ and $ \bar{\beta}$ by
    \begin{equation}\label{eqPre0_0}
    \begin{split}
    &   \frac{1}{\bar{s}}=\frac{1}{s}+\frac{1}{p'}, \quad \bar{p}=1, \quad \frac{1}{\bar{q}}=\frac{s}{q\bar{s}},  
    \quad \bar{a}=\frac{as}{(1-a)\bar{s}+as}, \\
    & \bar{\gamma}_1=\frac{\gamma_1s}{\bar{s}}, \quad \bar{\gamma}_2=\frac{\gamma_1s}{p'}+\gamma_2, \quad \bar{\gamma}_3=\frac{\gamma_3s}{\bar{s}}, \\
    & \bar{\alpha}=\frac{\alpha s}{\bar{s}}, \quad 
     \bar{\mu}=\frac{\alpha s}{p'}+\mu, \quad \bar{\beta}=\frac{\beta s}{\bar{s}}, 
          \end{split}
   \end{equation} 
   where $1/p+1/p'=1$.  Clearly,  $0<\bar{s}< s$.

   \begin{lem}\label{lemPre2_1}
 (i) If $n\ge 1$, $s, p, q, a, \gamma_1, \gamma_2$ and $\gamma_3$ satisfy (\ref{eqNCA_1}), (\ref{eqNCB_2})-(\ref{eqNCB_5}), then $\bar{s}, \bar{p},  \bar{q}, \bar{a}, \bar{\gamma}_1, \bar{\gamma}_2$ and $\bar{\gamma}_3$ also satisfy (\ref{eqNCA_1}) and (\ref{eqNCB_2})-(\ref{eqNCB_5}). 

(ii) If $n\ge 2$, $s, p, q, a, \gamma_1, \gamma_2, \gamma_3, \alpha, \mu$ and $\beta$  satisfy (\ref{eqNCA_1})-(\ref{eqNCA_7}), then 
  $\bar{s}, \bar{p},  \bar{q}, \bar{a}, \bar{\gamma}_1$, $\bar{\gamma}_2, \bar{\gamma}_3, \bar{\alpha}, \bar{\mu}$ and  $\bar{\beta}$ also satisfy (\ref{eqNCA_1})-(\ref{eqNCA_7}). 
  
  (iii)  Assume (\ref{eqNCA_1}) holds, then $1/s\le a/p+(1-a)/q$ if and only if  $1/\bar{s}\le \bar{a}/\bar{p}+(1-\bar{a})/\bar{q}$, and $1/s\ge a(1/p-1/n)+(1-a)/q$ if and only if $1/\bar{s}\ge \bar{a}(1-1/n)+(1-\bar{a})/\bar{q}$.
  \end{lem}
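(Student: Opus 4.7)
The lemma is a purely algebraic statement; my plan is to isolate a very small set of identities that reduce every one of the many conditions to a one-line check. Introduce the shortcuts
\[
\sigma := s/\bar{s} = 1 + s/p' > 0, \qquad \tau := 1 + as/p' > 0,
\]
from which one directly verifies $\bar a = a\sigma/\tau$, $1-\bar a = (1-a)/\tau$, $\sigma - as/p' = 1$, and $\tau - a\sigma = 1-a$. A short substitution then yields the scaling identities
\begin{align*}
\tfrac{1}{\bar s}+\tfrac{\bar\gamma_1}{n} &= \sigma\bigl(\tfrac{1}{s}+\tfrac{\gamma_1}{n}\bigr), \\
\tfrac{1}{\bar q}+\tfrac{\bar\gamma_3}{n} &= \sigma\bigl(\tfrac{1}{q}+\tfrac{\gamma_3}{n}\bigr), \\
\tfrac{1}{\bar p}+\tfrac{\bar\gamma_2-1}{n} &= \bigl(\tfrac{1}{p}+\tfrac{\gamma_2-1}{n}\bigr) + \tfrac{s}{p'}\bigl(\tfrac{1}{s}+\tfrac{\gamma_1}{n}\bigr),
\end{align*}
together with the obvious $(n-1)$-analogues, obtained by replacing $n$ by $n-1$ and $(\gamma_1,\gamma_2,\gamma_3)$ by $(\alpha,\mu,\beta)$. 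These are the only nontrivial computations.

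For part (i), (\ref{eqNCA_1}) and (\ref{eqNCB_2}) are immediate from the scaling identities (and from $\sigma > 0$, $\tau > 0$). For (\ref{eqNCB_3}), let $X,Y,Z$ denote the three sides of the unbarred (\ref{eqNCB_3}); then the barred right-hand side computes as
\[
\tfrac{a\sigma}{\tau}\bigl(Y + \tfrac{s}{p'}X\bigr) + \tfrac{1-a}{\tau}\,\sigma Z = \tfrac{\sigma}{\tau}\bigl(aY+(1-a)Z + a\tfrac{s}{p'}X\bigr),
\]
and substituting the unbarred identity $X=aY+(1-a)Z$ together with $\tau = 1+as/p'$ collapses the bracket to $\tau X$, giving $\sigma X$, which is the barred LHS. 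For (\ref{eqNCB_4}), plug in $\bar\gamma_1 = \sigma\gamma_1$, $\bar\gamma_2 = \gamma_1(\sigma-1) + \gamma_2$, $\bar\gamma_3 = \sigma\gamma_3$ and use $\tau + a(1-\sigma) = 1$; the inequality reduces to the unbarred one. The trigger hypotheses of (\ref{eqNCB_5}) are each equivalent to their unbarred counterparts (using the scaling identities, which show $\bar X=\bar Y=\bar Z\Leftrightarrow X=Y=Z$ and $\bar a\in\{0,1\}\Leftrightarrow a\in\{0,1\}$), and the conclusion transfers through part (iii). Part (ii) follows by the same script, with the $(n-1)$-analogues handling conditions involving $(\alpha,\mu,\beta)$; the work is bookkeeping, not conceptual.

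Part (iii) falls out of a single calculation. Using $\bar a = a\sigma/\tau$, $1-\bar a = (1-a)/\tau$, $\bar p = 1$, and $1/\bar q = \sigma/q$, one computes
\[
\tfrac{1}{\bar s} - \tfrac{\bar a}{\bar p} - \tfrac{1-\bar a}{\bar q} = \tfrac{\sigma}{s} - \tfrac{\sigma}{\tau}\bigl(a+\tfrac{1-a}{q}\bigr) = \tfrac{\sigma}{\tau}\Bigl[\tfrac{\tau}{s} - a - \tfrac{1-a}{q}\Bigr] = \tfrac{\sigma}{\tau}\left[\tfrac{1}{s}-\tfrac{a}{p}-\tfrac{1-a}{q}\right],
\]
where the last step uses $\tau/s = 1/s + a/p' = 1/s + a - a/p$. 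Adding $\bar a/n = (\sigma a)/(\tau n)$ to both sides gives the second identity
\[
\tfrac{1}{\bar s} - \bar a\bigl(1-\tfrac{1}{n}\bigr) - \tfrac{1-\bar a}{\bar q} = \tfrac{\sigma}{\tau}\left[\tfrac{1}{s} - a\bigl(\tfrac{1}{p}-\tfrac{1}{n}\bigr) - \tfrac{1-a}{q}\right].
\]
Since the prefactor $\sigma/\tau$ is strictly positive, both sign equivalences in (iii) follow at once. The only obstacle in the whole proof is keeping the accounting straight across the roughly dozen conditions to check; once the four boxed relations $\bar a = a\sigma/\tau$, $1-\bar a=(1-a)/\tau$, $\sigma - as/p'=1$, $\tau - a\sigma = 1-a$ are in hand, each verification is a one- or two-line substitution.
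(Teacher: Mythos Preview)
Your approach is essentially the same as the paper's: both are direct algebraic verification, and your scaling identities correspond exactly to the paper's displayed computations in its parts (a)--(h); your $\sigma,\tau$ shortcuts just package those computations more compactly. One small slip: the identity you list as $\sigma - as/p' = 1$ is false in general (it equals $1+(1-a)s/p'$); what you actually need and use is $\tau - as/p' = 1$ (equivalently $\sigma - s/p' = 1$), and with that correction everything goes through.
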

  \begin{proof}
  For convenience, denote $\Lambda=(s, p, q, a, \gamma_1, \gamma_2, \gamma_3, \alpha, \mu, \beta)$ and $\bar{\Lambda}=(\bar{s}, \bar{p},  \bar{q}, \bar{a}, \bar{\gamma}_1$, $\bar{\gamma}_2, \bar{\gamma}_3, \bar{\alpha}, \bar{\mu},  \bar{\beta})$. 
  By (\ref{eqPre0_0}), it is clear that $\bar{\Lambda}$ satisfies (\ref{eqNCA_1}).   
       
       Now we prove the following statements (a)-(h), which imply (i)-(iii).   
            
       (a) If $n\ge 2$ and (\ref{eqNCA_2}) holds for $\Lambda$, then (\ref{eqNCA_2}) also holds for $\bar{\Lambda}$.
       
       This follows from 
               \begin{equation*}
     \begin{split}
        & \frac{1}{\bar{s}}+\frac{\bar{\alpha}}{n-1}=\frac{s}{\bar{s}}\Big(\frac{1}{s}+\frac{\alpha}{n-1}\Big), \\
        & \frac{1}{\bar{p}}+\frac{\bar{\mu}}{n-1}=\frac{1}{p}+\frac{\mu}{n-1}+\frac{s}{p'}\Big(\frac{1}{s}+\frac{\alpha}{n-1}\Big), \\
         & \frac{1}{\bar{q}}+\frac{\bar{\beta}}{n-1}=\frac{s}{\bar{s}}\Big(\frac{1}{q}+\frac{\beta}{n-1}\Big),
         \end{split}
     \end{equation*}
     
        (b) If $n\ge 1$ and (\ref{eqNCA_3}) holds for $\Lambda$, then (\ref{eqNCA_3}) also holds  for $\bar{\Lambda}$.
        
       This follows from 
     \begin{equation}\label{eqPre1_6}
       \begin{split}
         &   \frac{1}{\bar{s}}+\frac{\bar{\gamma}_1+\bar{\alpha}}{n}=\frac{s}{\bar{s}}\Big(\frac{1}{s}+\frac{\gamma_1+\alpha}{n}\Big), \\
               &   \frac{1}{\bar{p}}+\frac{\bar{\gamma}_2+\bar{\mu}}{n}=\frac{1}{p}+\frac{\gamma_2+\mu}{n}+\frac{s}{p'}\Big(\frac{1}{s}+\frac{\gamma_1+\alpha}{n}\Big), \\
                        & \frac{1}{\bar{q}}+\frac{\bar{\gamma}_3+\bar{\beta}}{n}=\frac{s}{\bar{s}}\Big(\frac{1}{q}+\frac{\gamma_3+\beta}{n}\Big). 
       \end{split}
     \end{equation}

    (c) Let $n\ge 1$, then $\Lambda$ satisfies (\ref{eqNCA_5}) if and only if $\bar{\Lambda}$ satisfies (\ref{eqNCA_5}).

    Using the definition of $\bar{s}$, $\bar{\gamma}_1$ and $\bar{\alpha}$, we have
\begin{equation}\label{eqPre1_V_1}
  \begin{split}
    \frac{1}{\bar{s}}+\frac{\bar{\gamma}_1+\bar{\alpha}}{n}         & =\frac{s}{\bar{s}}\Big(1+\frac{as}{p'}\Big)^{-1}\Big(1+\frac{as}{p'}\Big)\Big(\frac{1}{s}+\frac{\gamma_1+\alpha}{n}\Big)\\
        & =\frac{s}{\bar{s}}\Big(1-a+\frac{s}{\bar{s}}a\Big)^{-1}\left(\frac{1}{s}+\frac{\gamma_1+\alpha}{n}+a\frac{s}{p'}\Big(\frac{1}{s}+\frac{\gamma_1+\alpha}{n}\Big)\right).
        \end{split}
\end{equation}
By the definition of $\bar{\gamma}_2$, $\bar{\gamma}_3$, $\bar{\mu}$ and $\bar{\beta}$, we have 
\begin{equation}\label{eqPre1_V_2}
  \begin{split}
   & \bar{a}\Big(1+\frac{\bar{\gamma}_2+\bar{\mu}-1}{n}\Big)+(1-\bar{a})\Big(\frac{1}{\bar{q}}+\frac{\bar{\gamma}_3+\bar{\beta}}{n}\Big)\\
   &=
    \frac{s}{\bar{s}}\Big(1-a+\frac{s}{\bar{s}}a\Big)^{-1}\left(a\Big(\frac{1}{p}+\frac{\gamma_2+\mu-1}{n}\Big)+(1-a)\Big(\frac{1}{q}+\frac{\gamma_3+\beta}{n}\Big)+a\frac{s}{p'}\Big(\frac{1}{s}+\frac{\gamma_1+ \alpha}{n}\Big)\right).  
    \end{split}        
\end{equation}
By (\ref{eqPre1_V_1}) and (\ref{eqPre1_V_2}), we have (c).

    
  (d) Let $n\ge 1$, then  $\Lambda$ satisfies (\ref{eqNCA_6_1}) if and only if  $\bar{\Lambda}$ satisfies (\ref{eqNCA_6_1}).
  
  This follows from the fact
        \[
        \begin{split}
      \bar{a}\bar{\gamma}_2+(1-\bar{a})\bar{\gamma}_3-\bar{\gamma}_1 & =\frac{s}{(1-a)\bar{s}+as}\left(a\gamma_2+(1-a)\gamma_3+a\gamma_1\Big(\frac{s}{\bar{s}}-1\Big)\right)- \frac{s}{\bar{s}}\gamma_1\\
      & =\frac{s}{(1-a)\bar{s}+as}(a\gamma_2+(1-a)\gamma_3-\gamma_1). 
        \end{split}
     \]

 (e) $\Lambda$ satisfies (\ref{eqNCA_6_2}) if and only if $\bar{\Lambda}$ satisfies (\ref{eqNCA_6_2}).
  
   This is because of 
\[
  \begin{split}
   &\bar{a}(\bar{\gamma}_2+\bar{\mu})+(1-\bar{a})(\bar{\gamma}_3+\bar{\beta})-(\bar{\gamma}_1+\bar{\alpha})\\
    & =\frac{s}{(1-a)\bar{s}+as}\left(a(\gamma_2+\mu)+(1-a)(\gamma_3+\beta)+a(\gamma_1+\alpha)\Big(\frac{s}{\bar{s}}-1\Big)\right)- \frac{s}{\bar{s}}(\gamma_1+\alpha)\\
   & =\frac{s}{(1-a)\bar{s}+as}(a(\gamma_2+\mu)+(1-a)(\gamma_3+\beta)-(\gamma_1+\alpha)). 
   \end{split}
\]

 (f)  Let $n\ge 2$, then $\Lambda$ satisfies (\ref{eqNCA_6_3}) if and only if $\bar{\Lambda}$ satisfies (\ref{eqNCA_6_3}).

Using the definition of $\bar{s}$ and $\bar{\alpha}$, we have
\begin{equation}\label{eqPre1_V_3}
  \begin{split}
    \frac{1}{\bar{s}}+\frac{\bar{\alpha}}{n-1} 
        &=\frac{s}{\bar{s}}\Big(1+\frac{as}{p'}\Big)^{-1}\Big(1+\frac{as}{p'}\Big)\Big(\frac{1}{s}+\frac{\alpha}{n-1}\Big)\\
        & =\frac{s}{\bar{s}}\Big(1-a+\frac{s}{\bar{s}}a\Big)^{-1}\left(\frac{1}{s}+\frac{\alpha}{n-1}+a\frac{s}{p'}\Big(\frac{1}{s}+\frac{\alpha}{n-1}\Big)\right).
        \end{split}
\end{equation}
By the definition of $\bar{\mu}$ and $\bar{\beta}$, and  the second and third equations in (\ref{eqPre1_5}), we have 
\begin{equation}\label{eqPre1_V_4}
 \begin{split}
& \bar{a}\left(\frac{1}{\bar{p}}+\frac{\bar{\mu}-1}{n-1}\right)+(1-\bar{a})\Big(\frac{1}{\bar{q}}+\frac{\bar{\beta}}{n-1}\Big)\\
& = \frac{s}{\bar{s}}\Big(1-a+\frac{s}{\bar{s}}a\Big)^{-1}\left(a\Big(\frac{1}{p}+\frac{\mu-1}{n-1}\Big)+(1-a)\Big(\frac{1}{q}+\frac{\beta}{n-1}\Big)+a\frac{s}{p'}\Big(\frac{1}{s}+\frac{\alpha}{n-1}\Big)\right). 
\end{split}
\end{equation}
So (f) follows from  (\ref{eqPre1_V_3}) and (\ref{eqPre1_V_4}).

  (g) $1/s\le a/p+(1-a)/q$ if and only if $1/\bar{s}\le \bar{a}+(1-\bar{a})/\bar{q}$, and $1/s\ge a(1/p-1/n)+(1-a)/q$ if and only if $1/\bar{s}\ge \bar{a}(1-1/n)+(1-\bar{a})/\bar{q}$.   
    
   The first part follows from 
 \begin{equation}\label{eqPre2_1}
     \begin{split}
        \bar{a}+ \frac{1-\bar{a}}{\bar{q}}-\frac{1}{\bar{s}} 
         & =\frac{s}{\bar{s}(1-a+as/\bar{s})}\Big(a+\frac{1-a}{q}-\frac{1-a+as/\bar{s}}{s}\Big)\\
         & =\frac{s}{\bar{s}(1-a+as/\bar{s})}\Big(a+\frac{1-a}{q}-\frac{1+as/p'}{s}\Big)\\
        & =\frac{s}{\bar{s}(1-a+as/\bar{s})}\Big(\frac{a}{p}+\frac{1-a}{q}-\frac{1}{s}\Big).
        \end{split}
     \end{equation}
    The second part follows from (\ref{eqPre2_1}) and the definition of $\bar{a}$, through the following computation 
     \[
        \bar{a}\Big(1-\frac{1}{n}\Big)+ \frac{1-\bar{a}}{\bar{q}}-\frac{1}{\bar{s}}  = \bar{a}+ \frac{1-\bar{a}}{\bar{q}}-\frac{1}{\bar{s}}-\frac{\bar{a}}{n}
         =\frac{s}{\bar{s}(1-a+as/\bar{s})}\left(a\Big(\frac{1}{p}-\frac{1}{n}\Big)+\frac{1-a}{q}-\frac{1}{s}\right). 
     \]
   
  (h) $\Lambda$ satisfies (\ref{eqNCA_7}) if and only if $\bar{\Lambda}$ satisfies (\ref{eqNCA_7}), and $\Lambda$ satisfies (\ref{eqNCB_5}) if and only if $\bar{\Lambda}$ satisfies (\ref{eqNCB_5}).

 By the definition of $\bar{\Lambda}$, we have $a=0$ if and only if $\bar{a}=0$, and $a=1$ if and only if $\bar{a}=1$. By (\ref{eqPre1_6}) and using $s/\bar{s}=1+s/p'$, we have 
\[
   \frac{1}{p}+\frac{\gamma_2+\mu-1}{n}=\frac{1}{q}+\frac{\gamma_3+\beta}{n}=\frac{1}{s}+\frac{\gamma_1+\alpha}{n} 
   \]
   if and only if 
   \[
   \frac{1}{\bar{p}}+\frac{\bar{\gamma}_2+\bar{\mu}-1}{n}=\frac{1}{\bar{q}}+\frac{\bar{\gamma}_3+\bar{\beta}}{n}=\frac{1}{\bar{s}}+\frac{\bar{\gamma}_1+\bar{\alpha}}{n}.
\]
By (\ref{eqPre1_V_3}) and (\ref{eqPre1_V_4}), 
\[
  \frac{1}{s}+\frac{\alpha}{n-1}=a\Big(\frac{1}{p}+\frac{\mu-1}{n-1}\Big)+(1-a)\Big(\frac{1}{q}+\frac{\beta}{n-1}\Big) 
  \]
if and only if 
\[
   \frac{1}{\bar{s}}+\frac{\bar{\alpha}}{n-1}=\bar{a}\left(1+\frac{\bar{\mu}-1}{n-1}\right)+(1-\bar{a})\Big(\frac{1}{\bar{q}}+\frac{\bar{\beta}}{n-1}\Big). 
\]
(h) then follows from the above in view of the first part of (g).

Now (i) follows from the fact that $\bar{\Lambda}$ satisfies (\ref{eqNCA_1}), (b)-(d) and (h). (ii) follows from the fact that $\bar{\Lambda}$ satisfies (\ref{eqNCA_1}), (a)-(e) and (h). (iii) follows from (g). 
  \end{proof}
  
    \begin{lem}\label{lemPre2_2}
  Let $n\ge 2$, $s, p, q, a, \gamma_1, \gamma_2, \gamma_3, 
\alpha, \beta$ and $\mu$ satisfy (\ref{eqNCA_1})-(\ref{eqNCA_7}) with $\gamma_1, \gamma_2, \gamma_3=0$, $a>0$, $p=1$, and $1/s-(1-a)/q<1$. Then the parameters $\hat{s}, \hat{p}, \hat{q}, \hat{a}, \hat{\gamma}_1, \hat{\gamma}_2, \hat{\gamma}_3$,   defined by (\ref{eqPre2_2_0}),  satisfy (\ref{eqNCA_1}), (\ref{eqNCB_2})-(\ref{eqNCB_4})  with $n$ replaced by $n-1$, and $1/\hat{s}\le \hat{a}/\hat{p}+(1-\hat{a})/\hat{q}$. 
\end{lem}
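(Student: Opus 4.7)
The plan is to verify each of the conditions on $\hat{\Lambda}:=(\hat{s},\hat{p},\hat{q},\hat{a},\hat{\gamma}_1,\hat{\gamma}_2,\hat{\gamma}_3)$ by direct computation, in the spirit of the proof of Lemma~\ref{lemPre2_1}. The key algebraic facts that drive everything are the \emph{collapse identities}
\[
(1-ab)\lambda=a(1-b)=a-ab,\qquad (1-ab)(1-\lambda)=1-a,
\]
which are immediate from the definitions in (\ref{eqPre2_2_b}), together with $ab=\tfrac{1}{s}-\tfrac{1-a}{q}$ (the definition of $b$) and the scaling relation (\ref{eqNCA_5}) applied with $p=1$, $\gamma_i=0$, which I would solve for $\alpha$ in the form $\alpha=a\mu-a+(1-a)\beta+na(1-b)$.

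First I would check the ranges. The hypothesis $\tfrac{1}{s}-\tfrac{1-a}{q}<1$ together with $a>0$ gives $0\le ab<1$, so $\lambda$ is well-defined. The displayed formula for $\alpha$ combined with (\ref{eqNCA_6_2}) (which reads $\alpha\le a\mu+(1-a)\beta$ here) yields $b\ge (n-1)/n$, and (\ref{eqQ_3_1_0}) gives $b\le 1$; hence $0\le\hat a=ab\le 1$ and $0\le\lambda\le 1$, so (\ref{eqNCA_1}) holds once $\hat q>0$ is established. Writing
\[
\frac{1}{\hat q}+\frac{\hat\gamma_3}{n-1}=\lambda\Big(1+\frac{\mu}{n-1}\Big)+(1-\lambda)\Big(\frac{1}{q}+\frac{\beta}{n-1}\Big)
\]
as a convex combination of positives (by (\ref{eqNCA_2}) with $p=1$) gives $\hat q>0$ and the $\hat q$-entry of (\ref{eqNCB_2}); the other two entries are identical to the $\alpha$- and $\mu$-entries of (\ref{eqNCA_2}). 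Using the collapse identities I compute $\hat a/\hat p+(1-\hat a)/\hat q=ab+(a-ab)+(1-a)/q=a+(1-a)/q$, so $1/\hat s\le \hat a/\hat p+(1-\hat a)/\hat q$ reduces to $\tfrac{1}{s}\le a+\tfrac{1-a}{q}$, i.e.\ (\ref{eqQ_3_1_0}) with $p=1$. Similarly, $\hat a\hat\gamma_2+(1-\hat a)\hat\gamma_3=ab\mu+(a-ab)\mu+(1-a)\beta=a\mu+(1-a)\beta$, so (\ref{eqNCB_4}) becomes exactly $\alpha\le a\mu+(1-a)\beta$, which is (\ref{eqNCA_6_2}) with $\gamma_i=0$.

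The only step needing real care is (\ref{eqNCB_3}) with $n$ replaced by $n-1$. Expanding the right-hand side with the collapse identities gives
\[
\hat a\Big(1+\frac{\mu-1}{n-1}\Big)+(a-ab)\Big(1+\frac{\mu}{n-1}\Big)+(1-a)\Big(\frac{1}{q}+\frac{\beta}{n-1}\Big)=a+\frac{a\mu-ab}{n-1}+(1-a)\Big(\frac{1}{q}+\frac{\beta}{n-1}\Big),
\]
and subtracting from $\tfrac{1}{s}+\tfrac{\alpha}{n-1}$ and using $ab=\tfrac{1}{s}-\tfrac{1-a}{q}$ leaves
\[
\mathrm{LHS}-\mathrm{RHS}=(ab-a)+\frac{\alpha-a\mu-(1-a)\beta+ab}{n-1}.
\]
The displayed formula for $\alpha$ yields $\alpha-a\mu-(1-a)\beta=a(n-1-nb)$, so the numerator of the fraction equals $a(n-1)(1-b)$ and $\mathrm{LHS}-\mathrm{RHS}=(ab-a)+a(1-b)=0$. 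The main obstacle is pure bookkeeping: each ingredient has to be pulled from the right hypothesis among (\ref{eqNCA_1})--(\ref{eqNCA_7}), and the cancellations in (\ref{eqNCB_3}) must be tracked carefully; once the two collapse identities are at hand, every requirement falls out by linear algebra.
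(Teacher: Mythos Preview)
Your proposal is correct and follows essentially the same route as the paper: a direct verification of each of the required conditions on $\widehat{\Lambda}$, using $(1-ab)\lambda=a-ab$ and $(1-ab)(1-\lambda)=1-a$ to reduce everything to the hypotheses on $\Lambda$. The only cosmetic difference is in the check of (\ref{eqNCB_3}): the paper rewrites the right-hand side as $\tfrac{n}{n-1}\bigl(a(1+\tfrac{\mu-1}{n})+(1-a)(\tfrac{1}{q}+\tfrac{\beta}{n})-\tfrac{1}{ns}\bigr)$ and then invokes (\ref{eqNCA_5}) directly, whereas you solve (\ref{eqNCA_5}) for $\alpha$ up front and plug in; the two computations are algebraically equivalent. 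One small wording point: the convex-combination display gives the $\hat q$-entry of (\ref{eqNCB_2}), but $\hat q>0$ itself follows more directly from $1/\hat q=\lambda+(1-\lambda)/q$ being a convex combination of $1$ and $1/q$.
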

   \begin{proof}
   Assume $s, p, q, a, \gamma_1, \gamma_2, \gamma_3, 
   \alpha, \beta, \mu$ satisfy (\ref{eqNCA_1})-(\ref{eqNCA_7}) with $\gamma_1, \gamma_2, \gamma_3=0$. For convenience, denote $\Lambda=(s, p, q, a, \gamma_1, \gamma_2, \gamma_3, 
   \alpha, \beta, \mu)$ and $\widehat{\Lambda}=(\hat{s}, \hat{p}, \hat{q}, \hat{a}, \hat{\gamma}_1, \hat{\gamma}_2, \hat{\gamma}_3)$.

Let $b$ and $\lambda$ be defined by  (\ref{eqPre2_2_b}). By the arguments below (\ref{eqPre2_2_b}), we have $0<\hat{a}<1$ and $0\le \lambda\le 1$. By this and the definition of $\hat{s}, \hat{p}, \hat{q}, \hat{a}$, (\ref{eqNCA_1}) holds for $\widehat{\Lambda}$.
Also, by the definition (\ref{eqPre2_2_0}) of $\widehat{\Lambda}$ and (\ref{eqNCA_2}) for $\Lambda$,  (\ref{eqNCB_2}) holds for $\hat{\Lambda}$ with $n$ replaced by $n-1$.

Next, by the definition of $\widehat{\Lambda}$, $\lambda$ and $b$, we have
\begin{equation}\label{eqPre2_2_3}
    \begin{split}
    & \hat{a}(1+\frac{\hat{\gamma}_2-1}{n-1})+(1-\hat{a})\left(\frac{1}{\hat{q}}+\frac{\hat{\gamma_3}}{n-1} \right)\\
        &=     \hat{a}\Big(1+\frac{\mu-1}{n-1}\Big)+(1-\hat{a})\left(\lambda \Big(1+\frac{\mu}{n-1}\Big)+(1-\lambda)\Big(\frac{1}{q}+\frac{\beta}{n-1}\Big)\right)\\
    &=     ab(1+\frac{\mu-1}{n-1})+(1-ab)\left(\frac{a(1-b)}{1-ab}(1+\frac{\mu}{n-1})+\frac{1-a}{1-ab}(\frac{1}{q}+\frac{\beta}{n-1})\right)\\
    & =a\left(b(1+\frac{\mu-1}{n-1})+(1-b)(1+\frac{\mu}{n-1})\right)+(1-a)(\frac{1}{q}+\frac{\beta}{n-1})\\
    &=a\Big(1+\frac{\mu}{n-1}-\frac{b}{n-1}\Big)+(1-a)\Big(\frac{1}{q}+\frac{\beta}{n-1}\Big)\\
    &=\frac{n}{n-1}\left(a\Big(1+\frac{\mu-1}{n}\Big)+(1-a)\Big(\frac{1}{q}+\frac{\beta}{n}\Big)-\frac{1}{ns}\right), 
      \end{split}
       \end{equation}
   where the definition of $b$  is used in the last step.  
   On the other hand, we have,    by using the definition of $\hat{\Lambda}$ and $\lambda$,  that 
   \[
      \frac{1}{\hat{s}}+\frac{\hat{\gamma}_1}{n-1}= \frac{1}{s}+\frac{\alpha}{n-1}. 
   \]
   Since $\Lambda$ satisfies (\ref{eqNCA_5}), by the above and (\ref{eqPre2_2_3}), we have (\ref{eqNCB_3}) holds for $\widehat{\Lambda}$. 
   
Using the definition of $\widehat{\Lambda}$ and the fact that $\gamma_1=\gamma_2=\gamma_3=0$, we have, by using the definition of $\hat{\Lambda}$ and $\lambda$, that
 \[
      \hat{a}\hat{\gamma}_2 +(1-\hat{a})\hat{\gamma}_3-\hat{\gamma}_1=a\mu +(1-a)\beta-\alpha=a(\mu+\gamma_2) +(1-a)(\beta+\gamma_3)-(\alpha+\gamma_1).  
         \]
     In view of (\ref{eqNCA_6_2}) for $\Lambda$, (\ref{eqNCB_4}) holds for $\widehat{\Lambda}$. 

Finally, by the definition of $\lambda$ and $\hat{a}$, we have

 \[
      \hat{a}+\frac{1-\hat{a}}{\hat{q}}=a+\frac{1-a}{q}.   
   \] 
   In view of (i) and the assumption $p=1$, we have $1/\hat{s}\le \hat{a}/\hat{p}+(1-\hat{a})/\hat{q}$. 
  \end{proof}

\end{document}